\newtheoremstyle{mystyle} % Name
  {}%                                     % Space above
  {}%                                     % Space below
  {\itshape}%                                     % Body font
  {}%                                     % Indent amount
  {\bfseries}%                            % Theorem head font
  {.}%                                    % Punctuation after theorem head
  { }%                                    % Space after theorem head, ' ', or \newline
  {}%                                     % Theorem head spec (can be left empty, meaning normal')
\theoremstyle{mystyle}
\newtheorem{thm}{Theorem}[section]
\newtheorem{cor}[thm]{Corollary}
\newtheorem{lmm}[thm]{Lemma}
\newtheorem{prpn}[thm]{Proposition}
\newtheorem{rem}[thm]{Remark}
\newtheorem*{thma}{Theorem A}
\newtheorem*{thmb}{Theorem B}
\newtheorem*{thmc}{Theorem C}
\newtheorem*{obsb}{Observation b}
\theoremstyle{definition}
\newtheorem{defn}[thm]{Definition}
\numberwithin{equation}{section}
\DeclareMathAlphabet{\mathpzc}{OT1}{pzc}{m}{it}
\newcommand{\spn}{\mathpzc{sp}}
\newcommand{\wt}{\widetilde}
\newcommand{\w}{\textup{w}}
\newcommand{\R}{\mathbb{R}}
\newcommand{\Z}{\mathbb{Z}}
\newcommand{\hf}{\hspace*{0.5cm}}
\newcommand{\bgd}{\begin{displaymath}}
\newcommand{\edd}{\end{displaymath}}
\newcommand{\bge}{\begin{equation}}
\newcommand{\ede}{\end{equation}}
\newcommand{\bgc}{\begin{center}}
\newcommand{\edc}{\end{center}}
\title{On the cohomology ring and upper characteristic rank of Grassmannian of oriented $3$-planes}
\author{Somnath Basu \and Prateep Chakraborty \thanks{The second author is funded by an NBHM post-doctoral fellowship.}}
\begin{document}

\maketitle
%\usepackage{refcheck}

%\subjclass[2010]{Primary: 57R19, 57R20, 57T15; Secondary: 55R20, 55R25} 
%\keywords{Oriented Grassmanian of three planes, upper characteristic rank, cup-length}

\begin{abstract}
In this paper we study the mod $2$ cohomology ring of the Grasmannian $\wt{G}_{n,3}$ of oriented $3$-planes in $\R^n$. We determine the degrees of the indecomposable elements in the cohomology ring. We also obtain an almost complete description of the cohomology ring. This partial description allows us to provide lower and upper bounds on the cup length of $\wt{G}_{n,3}$. As another application, we show that the upper characteristic rank of $\wt{G}_{n,3}$ equals the characteristic rank of $\wt{\gamma}_{n,3}$, the oriented tautological bundle over $\wt{G}_{n,3}$.
\end{abstract}

%\vspace*{0.5cm}
%\bgc
%MSC 2010 Classification : 57R19, 57R20, 57T15 (primary), 55R20, 55R25 (secondary).
%\edc
%\vspace*{0.5cm}
%\tableofcontents
%\vspace*{0.5cm}

%=========================================== Section 1 - Introduction =======================================

%\noindent

%Perhaps worth thinking about ...\\
%$\lp$ {\bf work out cohomology for $G_{2^t,3}$}\\
%$\lp$ {\bf what does pulling back $w_{2^t-4}, w_{3n-2^t-1}$ give us?}

\section{Introduction}

\hf\hf Grassmann manifolds has been a central object in topology and geometry, especially in the study of classifying spaces and vector bundles. Understanding the cohomology ring, therefore, is of importance. If we consider the Grassmann manifold of $k$-planes in real $n$-dimensional Euclidean space, denoted by $G_{n,k}$, then its cohomology ring with mod $2$ coefficients is well-known by classical results \cite{Bor53} of Borel. Theses manifolds come naturally equipped with a rank $k$ bundle $\gamma_{n,k}\to G_{n,k}$, called the tautological bundle. The associated characteristic classes $w_i$ are the universal Stiefel-Whitney classes. The manifold $G_{n,k}$ has a double cover $\wt{G}_{n,k}$, the Grassmannian of oriented $k$-planes in $\R^n$. This cover is also the universal cover when $n\geq 2k$ and $k\geq 2$. The direct limit of $\wt{G}_{n,k}$ over $n$ is the classifying space for oriented real vector bundles of rank $k$. However, very little is known about the cohomology ring (even with mod $2$ coefficients) of $\wt{G}_{n,k}$ for general values of $n$ and $k$. \\
\hf\hf Due to \cite{KorRu16b} we have a complete description of $H^*(\wt{G}_{n,2};\mathbb{Z}_2).$ In fact, we observe that an alternate (and geometric) proof of the description of $H^\ast(\wt{G}_{n,2};\Z_2)$ for $n$ even is possible using a stronger result of Lai \cite{Lai74} on even-dimensional complex quadrics. We use algebraic topological methods to analyze $H^\ast(\wt{G}_{n,3};\Z_2)$; this is different from the more algebraic methods used in the literature. We introduce the space 
\bgd
W_{2,1}^n:=\left\{(\wt{P},v)\in \wt{G}_{n,2}\times S^{n-1}\,|\,\mbox{$\wt{P}$ and $v$ are orthogonal}\right\}
\edd
and consider the $S^{n-3}$-bundle given by $W_{2,1}^n \to \wt{G}_{n,2}$. This helps us in characterizing the cohomology ring of $W_{2,1}^n$ (cf. Theorem \ref{HW21}). We also analyze the map $\iota^\ast:H^\ast(W^{n+1}_{2,1})\to H^\ast(W^{n}_{2,1})$ (cf. Theorem \ref{wnwnp1}). Exploiting the existence of the $2$-sphere bundle $W_{2,1}^n\xrightarrow{\mathpzc{sp}} \wt{G}_{n,3}$, given by the span of the two plane and the vector, we are able to analyze the cohomology ring of $\wt{G}_{n,3}$.\\
\hf\hf Due to \cite{Kor15} and \cite{PPR17}, we know the degree of the first indecomposable element of $H^*(\wt{G}_{n,3};\mathbb{Z}_2)$ after degree $3$. In this article we determine the complete set of degrees of indecomposables in $H^*(\wt{G}_{n,3};\mathbb{Z}_2)$ (cf. Theorem \ref{fifth}).
\begin{thma}
\textup{(a)} There is one indecomposable in each of the degrees $2,~3,~ 2^t-1$ in $H^*(\wt{G}_{2^t,3})$.\\
\textup{(b)} There is one indecomposable in each of the degrees $2,~3,~ 2^t-4$ in $H^*(\wt{G}_{n,3})$ for $n=2^t-1,~2^t-2,~2^t-3$.\\
\textup{(c)} Let $2^{t-1}<n\leq 2^t-4$. Then the only indecomposables in $H^*(\wt{G}_{n,3})$ are in degrees $2,~3,~3n-2^t-1,~2^t-4$ with one indecomposable in each degree.
\end{thma}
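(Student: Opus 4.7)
The plan is to compare two Poincar\'e series---that of $H^\ast(\wt{G}_{n,3};\mathbb{Z}_2)$ and that of the subalgebra $A$ generated by $w_2 := w_2(\wt{\gamma}_{n,3})$ and $w_3 := w_3(\wt{\gamma}_{n,3})$---and read off the indecomposables from the degrees at which the two series disagree. The main computational tool is the Gysin long exact sequence of the $S^2$-bundle $\spn\colon W_{2,1}^n \to \wt{G}_{n,3}$, whose mod $2$ Euler class is $w_3$:
$$\cdots \to H^{k-3}(\wt{G}_{n,3}) \xrightarrow{\,\cdot\, w_3\,} H^k(\wt{G}_{n,3}) \xrightarrow{\spn^\ast} H^k(W_{2,1}^n) \to H^{k-2}(\wt{G}_{n,3}) \to \cdots$$
Combined with the explicit description of $H^\ast(W_{2,1}^n;\mathbb{Z}_2)$ from Theorem \ref{HW21} and Poincar\'e duality for $\wt{G}_{n,3}$, this yields the Betti numbers of $\wt{G}_{n,3}$ in every degree. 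The Poincar\'e series of $A$ is in turn determined by the Borel-type relations $\bar w_j = 0$ for $j \in \{n-2,\, n-1,\, n\}$ evaluated at $w_1 = 0$.

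First I would note that because $\wt{\gamma}_{n,3}$ is oriented, $w_1 = 0$ and hence $w_2, w_3$ are indecomposable whenever they are nonzero, which is the case throughout the range of $n$ considered; these furnish the first two indecomposables in every regime. With the two Poincar\'e series in hand I would then match them degree by degree. In regime (a), $n = 2^t$, the relations in $A$ are sufficiently mild that $A$ nearly exhausts $H^\ast(\wt{G}_{n,3})$, with a single deficit in degree $2^t - 1$ to be remedied by a new indecomposable. In regime (b), $n \in \{2^t - 1,\, 2^t - 2,\, 2^t - 3\}$, the relations truncate $A$ at a lower degree and force a new indecomposable in degree $2^t - 4$. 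In regime (c), $2^{t-1} < n \leq 2^t - 4$, the comparison reveals a first deficit in degree $3n - 2^t - 1$ (consistent with the known result of \cite{Kor15, PPR17}) and a second in degree $2^t - 4$, producing two further indecomposables.

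The principal obstacle is verifying completeness: once candidate indecomposables have been adjoined at the degrees listed in (a), (b), (c), one must rule out additional indecomposables at every higher degree by showing that the enlarged subalgebra already realises the full Poincar\'e series of $H^\ast(\wt{G}_{n,3};\mathbb{Z}_2)$. This amounts to a delicate monomial count against the Borel-type relations, whose shape depends sensitively on the binary expansion of $n$ in each of the three regimes, and must be performed separately for (a), (b), and (c). The count is controlled by lifting products of $w_2, w_3$, and the new generators to $H^\ast(W_{2,1}^n)$ via $\spn^\ast$---where the algebra is fully described by Theorem \ref{HW21}---and then descending the resulting relations back to $\wt{G}_{n,3}$ through the Gysin exact sequence, together with uniqueness-modulo-decomposables arguments for each newly introduced class.
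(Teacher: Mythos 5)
Your overall strategy---detecting indecomposables by playing $H^\ast(\wt{G}_{n,3})$ off against the fully known ring $H^\ast(W_{2,1}^n)$ through the $2$-sphere bundle $\spn$---is the right one and is close in spirit to the paper's, but as written the proposal has two genuine gaps. First, the Gysin sequence of $\spn$ does not ``yield the Betti numbers of $\wt{G}_{n,3}$ in every degree'': it only gives $\dim H^k(W_{2,1}^n)=\dim\textup{coker}(\cup\,\wt{w}_3)+\dim\textup{ker}(\cup\,\wt{w}_3)$ in adjacent degrees, and the ranks of $\cup\,\wt{w}_3$ in the middle degrees are precisely the unknown quantities; Poincar\'e duality does not close this system. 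The paper never computes the Betti numbers. Instead it transfers the indecomposability question to the total space via Lemma \ref{important}, which shows that for $r>3$ the indecomposable quotient of $H^r(\wt{G}_{n,3})$ is isomorphic to $F_r^r$ modulo products of lower filtration pieces of $H^\ast(W^n_{2,1})$ --- the point being that $\textup{ker}\,\spn^\ast=\textup{im}(\cup\,\wt{w}_3)$ consists of decomposables in degrees above $3$. You need some statement of this kind before any information about $W^n_{2,1}$ can be descended to $\wt{G}_{n,3}$.

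Second, and more seriously, even granting both Poincar\'e series, a series comparison locates only the \emph{first} new indecomposable (which is already known from \cite{Kor15} and \cite{PPR17}); to find the second one in regime (c) and to prove completeness you must know the Hilbert series of the subalgebra generated by $\wt{w}_2,\wt{w}_3$ and the first new generator $u$, and that depends on which products $\wt{w}_2^i\wt{w}_3^j\cdot u$ vanish. This multiplicative information is the actual content of the theorem, and your ``delicate monomial count'' defers it without supplying the required input. In the paper this is Proposition \ref{P1}: the degrees $2^t-4$ and $3n-2^t-1$ arise as the least exponents $i$ for which $\w_2^i c_{n-2}$, respectively $\w_2^i c_{n-1}$, first lands in the bottom filtration $F^{2i+n-2}_{2i+n-2}$, respectively $F^{2i+n-1}_{2i+n-1}$. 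The upper bounds come from the algebraic identity of Lemma \ref{not so} ($w_2^{n-2^{t-1}}g_{n-3}=g_{3n-2^t-3}+\cdots$, hence $\wt{w}_2^{\,n-2^{t-1}}g_{n-3}=0$ in $H^\ast(\wt{G}_{n,3})$) together with $\wt{w}_2\,p_{2^t-8}=0$, and the lower bounds from the nonvanishing of the top class $\w_2^{n/2-2}c_{n-2}c_{n-1}$ in the known ring $H^\ast(W^n_{2,1})$ of Theorem \ref{HW21}. Note in particular that the new indecomposables of $\wt{G}_{n,3}$ do not correspond to the indecomposables $c_{n-2},c_{n-1}$ of $W^n_{2,1}$ themselves, but to specific $\w_2$-multiples of them becoming pulled back from the base for the first time --- a mechanism that a pure dimension count, or a naive ``deficit in the Poincar\'e series'' argument, would not reveal. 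Without an argument along these lines the completeness step of your proposal cannot be carried out.
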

We note that Theorem A (cf. Theorem \ref{fifth} in this article) gives a different proof of Theorem 1.1 of \cite{PPR17}. \\
\hf\hf Let $\wt{w}_2:=\pi^\ast(w_2),\wt{w}_3:=\pi^\ast(w_3)$ denote the 2nd and 3rd Stiefel-Whitney classes of $\wt{\gamma}_{n,3}$, which is the pullback of $\gamma_{n,3}$ via the covering map $\pi:\wt{G}_{n,3}\to G_{n,3}$. We shall call this bundle the oriented tautological bundle. The two classes $\wt{w}_2,\wt{w}_3$ are the indecomposables in degree $2$ and $3$ respectively. Let us denote an indecomposable of degree $i>3$ by $w_i$. We do not have any specific choice for $w_i$ when $i>3.$ However, if we choose any indecomposable element in $H^i(\wt{G}_{n,3};\mathbb{Z}_2)$, these will satisfy some relations stated in the following result (part of Theorem \ref{relations}).
\begin{thmb}
\textup{(1)} Let $n=2^t$, then 
\bgd
H^*(\wt{G}_{2^t,3})\cong \frac{\frac{\mathbb{Z}_2[\wt{w}_2,\wt{w}_3]}{\langle g_{2^t-2}, g_{2^t-1}\rangle}\otimes \mathbb{Z}_2[w_{2^t-1}]}{\langle w_{2^t-1}^2-P w_{2^t-1}\rangle }
\edd
for some $P(\wt{w}_2,\wt{w}_3)\in \pi^*(H^*(G_{2^t,3})).$\\
\textup{(2)} Let $n=2^t-1,~2^t-2,~2^t-3.$ Then 
\bgd
H^*(\wt{G}_{n,3})\cong \frac{\frac{\mathbb{Z}_2[\wt{w}_2,\wt{w}_3]}{\langle g_{n-2},g_{n-1},g_n\rangle}\otimes \mathbb{Z}_2[w_{2^t-4}]}{\langle w_{2^t-4}^2-P_1 w_{2^t-4}-P_2\rangle }
\edd
for some $P_1(\wt{w}_2,\wt{w}_3),P_2(\wt{w}_2,\wt{w}_3)\in\pi^*(H^*(G_{n,3}))$ with $P_2=0$ for $n=2^t-2,~2^t-3.$\\
\textup{(3)} Let $2^{t-1}<n\leq 2^t-4$. Then we have the following relations:\\
\textup{(a)} If $3n-2^t-1<2^t-4$ then
\bgd
w_{3n-2^t-1}^2 = \mathcal{R}_1+w_{3n-2^t-1}\mathcal{R}_2+w_{2^t-4}\mathcal{R}_3,\,\,w_{2^t-4}^2=0
\edd
while if $3n-2^t-1>2^t-4$ then
\bgd
w_{2^t-4}^2 = \mathcal{Q}_1+w_{3n-2^t-1}\mathcal{Q}_2+w_{2^t-4}\mathcal{Q}_3,\,\,w_{3n-2^t-1}^2=0
\edd
where $\mathcal{R}_i,\mathcal{Q}_i$'s are polynomials in $\wt{w}_2$ and $\wt{w}_3$.\\
\textup{(b)} There exists $v_{2^t-8}\in \pi^*(H^{2^t-8}(G_{n,3};\mathbb{Z}_2))$ and $v_{3n-2^t-5}\in \pi^*(H^{3n-2^t-5}(G_{n,3};\mathbb{Z}_2))$ which are Poincar\'{e} dual to $w_{3n-2^t-1}$ and $w_{2^t-4}$ respectively such that
\bgd
v_{2^t-8} w_{2^t-4}=0,\,\,\,\,v_{3n-2^t-5} w_{3n-2^t-1}=0.
\edd
\end{thmb}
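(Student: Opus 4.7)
The plan is to combine the classification of indecomposables from Theorem A with the Gysin sequence for the $S^2$-bundle $\spn\colon W_{2,1}^n\to \wt G_{n,3}$, whose total cohomology is determined by Theorem \ref{HW21}. The Euler class of this bundle is $\wt w_3$, so the sequence reads $\cdots\to H^{k-3}(\wt G_{n,3})\xrightarrow{\cup\wt w_3} H^k(\wt G_{n,3})\to H^k(W_{2,1}^n)\to H^{k-2}(\wt G_{n,3})\to\cdots$ and reduces the task to writing down the multiplicative relations between the generators listed in Theorem A and the subring $\pi^\ast H^\ast(G_{n,3})$. The latter is controlled by Borel's presentation: setting $w_1=0$ in $\Z_2[w_1,w_2,w_3]/\langle\bar w_{n-2},\bar w_{n-1},\bar w_n\rangle$ yields three polynomials $g_{n-2},g_{n-1},g_n\in\Z_2[\wt w_2,\wt w_3]$, and a rank count using Theorem A shows these are precisely the relations on the subring, except in the degenerate case $n=2^t$ where $g_n$ is redundant and only $g_{2^t-2},g_{2^t-1}$ survive.

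Once the subring is identified, the next step is to locate the new indecomposables $\alpha\in\{w_{2^t-1}, w_{2^t-4}, w_{3n-2^t-1}\}$ within the ring. Theorem A forbids any fresh indecomposables in the degrees of $\alpha^2$ and (in case (3)) $\alpha\beta$, so each such product is forced to be a $\pi^\ast H^\ast(G_{n,3})$-linear combination of $1,\alpha,\beta$. Combining the Gysin sequence for $\spn$ with the explicit description of $H^\ast(W_{2,1}^n)$ constrains both the module structure of $H^\ast(\wt G_{n,3})$ over $\pi^\ast H^\ast(G_{n,3})$ and the precise coefficients that can appear; Poincar\'e series matching forces the free module decomposition $H^\ast(\wt G_{n,3})=\pi^\ast H^\ast(G_{n,3})\oplus\alpha\cdot\pi^\ast H^\ast(G_{n,3})$ in parts (1) and (2), yielding the tensor product form modulo the single stated relation. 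The vanishing $P_2=0$ for $n=2^t-2,2^t-3$ comes from a direct degree check, since the only candidate monomials for $P_2$ have degree exceeding the top dimension $3(n-3)$.

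For part (3) the same module argument supplies two free generators $w_{2^t-4}$ and $w_{3n-2^t-1}$, and the shapes of the squares follow by comparing their degrees with the top dimension $3n-9$: whichever of the two has its square exceeding the top dimension must reduce as stated, while the other's square is expressed by the analogous polynomial in $\wt w_2,\wt w_3$ and the two generators. I expect the main obstacle to lie in part (3)(b), namely producing Poincar\'e duals $v_{2^t-8},v_{3n-2^t-5}$ inside $\pi^\ast H^\ast(G_{n,3})$ and verifying the vanishing identities $v_{2^t-8} w_{2^t-4}=0$ and $v_{3n-2^t-5} w_{3n-2^t-1}=0$. The degree identities $2^t-8+(3n-2^t-1)=3n-9$ and $(3n-2^t-5)+(2^t-4)=3n-9$ make the existence of duals a matter of locating the correct monomial; I would pin it down as the image of an explicit monomial in $\wt w_2,\wt w_3$ under the Gysin map and detect nontriviality through the top class of $W_{2,1}^n$ provided by Theorem \ref{HW21}. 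The vanishing identities then follow by showing that the relevant products fall into a subquotient where Theorem \ref{wnwnp1}, comparing $H^\ast(W_{2,1}^{n+1})\to H^\ast(W_{2,1}^n)$, forces triviality.
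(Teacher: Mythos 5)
Your overall framework (the $S^2$-bundle $W_{2,1}^n\to\wt{G}_{n,3}$, the classification of indecomposables, the module structure over $\pi^*(H^*(G_{n,3}))$, Poincar\'e duality and degree arguments) matches the paper's, but there is a genuine gap at the central step. You assert that since Theorem A forbids fresh indecomposables in degree $2\deg\alpha$, the product $\alpha^2$ ``is forced to be a $\pi^*H^*(G_{n,3})$-linear combination of $1,\alpha,\beta$.'' This is a non sequitur: the absence of indecomposables in that degree only says $H^{2\deg\alpha}(\wt{G}_{n,3})$ is spanned by monomials in $\wt{w}_2,\wt{w}_3,\alpha,\beta$, and one of those monomials is $\alpha^2$ itself. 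Nothing so far prevents $\alpha^2$ from being linearly independent of the submodule generated by $1,\alpha,\beta$; equivalently, the real content is that $\alpha^2\in(\wt{w}_2,\wt{w}_3)$, and this needs a separate mechanism. The ``Poincar\'e series matching'' you invoke cannot supply it as stated: for part (3) the module is not free (indeed $v_{2^t-8}w_{2^t-4}=0$ and $v_{3n-2^t-5}w_{3n-2^t-1}=0$, so $R\oplus R\alpha\oplus R\beta$ with $R=\pi^*(H^*(G_{n,3}))$ has strictly larger dimension than $H^*(\wt{G}_{n,3})$), and even in parts (1)--(2) one must first prove directness of $R\oplus R\alpha$ (i.e.\ $\mathrm{Ann}_R(\alpha)=0$ and $R\cap R\alpha=0$) before any dimension count closes the argument. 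The paper's device for all of this is a bundle you never use: the frame bundle $SO(3)\hookrightarrow V_3(\R^n)\to\wt{G}_{n,3}$, in whose Serre spectral sequence $E_4^{*,0}=H^*(\wt{G}_{n,3})/(\wt{w}_2,\wt{w}_3)$, the class $v\otimes a^3$ survives to $E_4$ because $\wt{w}_2v=\wt{w}_3v=0$, and $d^4(v\otimes a^3)=\bar{w}$ together with $vw=0$ gives $\bar{w}^2=d^4(vw\otimes a^3)=0$, i.e.\ $w^2\in(\wt{w}_2,\wt{w}_3)$. Note this also inverts your logical order: the vanishing identities of (3)(b), which you defer as a final ``main obstacle,'' are an \emph{input} to the square relations of (3)(a), not a consequence of the module structure; and the elements $v_{2^t-8},v_{3n-2^t-5}$ are not monomials but explicit classes $\wt{w}_2^k p$ with $\wt{w}_3p=g_{n-1}$ (resp.\ $g_n+\wt{w}_2g_{n-2}$), whose nonvanishing is the content of Proposition \ref{P1}.

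A secondary but concrete error: you claim $P_2=0$ for $n=2^t-2,\,2^t-3$ because the candidate monomials for $P_2$ exceed the top dimension $3(n-3)$. They do not: $\deg P_2=2(2^t-4)=2^{t+1}-8\leq 3n-9$ for these $n$ once $t\geq 4$. The correct comparison is with the top nonzero degree of the \emph{subring} $\pi^*(H^*(G_{n,3}))$, which is $3n-2^t-5$ (the degree of $v_{3n-2^t-5}$); one has $2^{t+1}-8>3n-2^t-5$ exactly for $n<2^t-1$, which is why $P_2$ survives for $n=2^t-1$ but vanishes for $n=2^t-2,\,2^t-3$. Determining that top degree again requires the $v$-analysis and the $V_3(\R^n)$ spectral sequence, so this cannot be dismissed as ``a direct degree check.''
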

We shall prove an extended result (cf. Theorem \ref{relations}) where $v_{2^t-8},v_{3n-2^t-5}$ will be studied extensively.\\ 
\hf\hf As a corollary (see Corollary \ref{cuplength} of Theorem \ref{relations}) of Theorem B, we obtain a lower bound and an upper bound for the cup-length. Recall that $\mathbb{Z}_2$-cup-length of a path-connected space $X$, denoted by $\texttt{cup}(X)$, is defined as the maximum $r$ such that there exist classes $x_1,x_2,\cdots ,x_r\in H^*(X;\mathbb{Z}_2)$ with non-trivial cup product, i.e., $x_1\cup x_2\cup\cdots \cup x_r\neq0$. Fukaya in \cite[Conjecture 1.2]{Fuk08} conjectured about the $\mathbb{Z}_2$-cup-length of oriented Grassmann manifolds $\wt{G}_{n,3}.$ In \cite{Fuk08, Kor15, KorRu16b, Rus17} the authors have proved the conjecture for $n=2^t-1,~2^t,~2^t+1,~2^t+2,~2^t+2^{t-1}+1,~2^t+2^{t-1}+2$ ($t\geq3$). In \cite{PPR17} the authors have given the exact values of $\texttt{cup}(\wt{G}_{n,3})$ for $2^{t-1}<n\leq 2^{t-1}+\frac{2^{t-1}}{3}-1$ and $t\geq 4$. We note that for \cite{Fuk08, PPR17} one has to make a slight adjustment; the notation used in these two papers $\wt{G}_{n,3}$ consists of the oriented $3$-dimensional subspaces of $\mathbb{R}^{n+3}$. The exact value of $\texttt{cup}(\wt{G}_{n,3})$ is not known for $n\in[2^{t-1}+\frac{2^{t-1}}{3},2^t-2]$ ($t\geq 4$). For one sub-interval $[2^{t-1}+2^{t-2},2^t-2]$ of $[2^{t-1}+\frac{2^{t-1}}{3},2^{t}-2]$, an upper bound for $\texttt{cup}(\wt{G}_{n,3})$ is given in \cite[Corollary 3.2]{Rus17}. In the following result (cf. Corollary \ref{cuplength}) we show that the same upper bound will also work for the whole interval $[2^{t-1}+\frac{2^{t-1}}{3},2^t-2]$. We also provide a lower bound for $\texttt{cup}(\wt{G}_{n,3})$. These bounds are consistent with Fukaya's conjecture.
\begin{obsb}
Let $t\geq 4$ and $n\in[2^{t-1}+\frac{2^{t-1}}{3},2^t-2]$. Then the $\mathbb{Z}_2$-cup-length $\texttt{cup}(\wt{G}_{n,3})$ is bounded above by $\frac{3(n-3)}{2}-2^{t-1}+3$ and is bounded below by the maximum of $2^{t-1}-3$ and $\frac{4(n-3)}{3}-2^{t-1}+3.$ 
\end{obsb}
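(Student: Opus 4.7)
The plan is to derive both bounds from the explicit ring-theoretic description of $H^*(\wt{G}_{n,3};\Z_2)$ given in Theorem \ref{relations} (Theorem B).

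\textbf{Setup.} For $n\in[2^{t-1}+2^{t-1}/3,\,2^t-2]$, a direct computation gives $3n-2^t-1\ge 2^t-1>2^t-4$: when $2^{t-1}<n\le 2^t-4$ we fall in the second subcase of Theorem \ref{relations}(3), where $w_{3n-2^t-1}^2=0$ and $w_{2^t-4}^2$ is reducible, whereas for $n\in\{2^t-3,\,2^t-2\}$ Theorem \ref{relations}(2) supplies the analogous squaring relation on $w_{2^t-4}$. A degree check yields $\deg(w_{3n-2^t-1}w_{2^t-4})=3n-5>3(n-3)=\dim\wt{G}_{n,3}$, hence $w_{3n-2^t-1}w_{2^t-4}=0$.

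\textbf{Upper bound.} Since $\pi:\wt{G}_{n,3}\to G_{n,3}$ is a double cover, $\pi^*$ vanishes on top $\Z_2$-cohomology, so $\pi^*\bigl(H^*(G_{n,3})\bigr)$ has no top-degree component; by Theorem \ref{relations}, every non-zero top class then lies in $\pi^*(H^*)\cdot w_{3n-2^t-1}+\pi^*(H^*)\cdot w_{2^t-4}$. Because any non-trivial cup product in a lower degree extends via Poincar\'{e} duality to one reaching the top degree, it suffices to bound cup products realizing the top class. Writing each factor $x_i$ in the normal form $A_i+B_iw_{3n-2^t-1}+C_iw_{2^t-4}$ (with $A_i,B_i,C_i\in\pi^*$), and using $w_{3n-2^t-1}^2=0=w_{3n-2^t-1}w_{2^t-4}$ together with the reducibility of $w_{2^t-4}^2$, a short case analysis in the number of $C$-contributions shows that the bound is optimized when exactly one factor $x_{i_0}$ has degree $\ge 2^t-4$. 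The remaining $m-1$ factors then have total degree $\le 3(n-3)-(2^t-4)=3n-2^t-5$, each of degree at least $2$, so $m\le\frac{3n-2^t-3}{2}=\frac{3(n-3)}{2}-2^{t-1}+3$.

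\textbf{Lower bound.} For the bound $2^{t-1}-3$ when $n\le 2^t-4$, consider the product $\wt{w}_2^{\,2^{t-1}-4}\cdot w_{3n-2^t-1}$ of total degree $3(n-3)$ and $2^{t-1}-3$ factors; by Theorem \ref{relations}(3)(b) it is non-zero provided $\wt{w}_2^{\,2^{t-1}-4}$ represents $v_{2^t-8}$ modulo $\mathrm{Ann}(w_{3n-2^t-1})$. For $n\in\{2^t-3,2^t-2\}$ this length is instead realized (with ample room) by $\wt{w}_2^{\,2^{t-1}-4}\cdot w_{2^t-4}$. For the bound $\frac{4(n-3)}{3}-2^{t-1}+3$, we construct a mixed product $\wt{w}_3^{\,(n-3)/3}\cdot \wt{w}_2^{\,n-2^{t-1}-1}\cdot w_{2^t-4}$ (adjusting the exponents by $\pm 1$ when $3\nmid n-3$), which has total degree $3(n-3)$ and exactly $\frac{4(n-3)}{3}-2^{t-1}+3$ factors; non-triviality follows once more from Theorem \ref{relations}(3)(b), by identifying $\wt{w}_3^{\,(n-3)/3}\cdot \wt{w}_2^{\,n-2^{t-1}-1}$ with $v_{3n-2^t-5}$ modulo $\mathrm{Ann}(w_{2^t-4})$.

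\textbf{Main obstacle.} The decisive technical step is verifying that the specific monomials in $\wt{w}_2,\wt{w}_3$ used in the lower bound actually represent the Poincar\'{e}-dual classes $v_{2^t-8}$ and $v_{3n-2^t-5}$ (at least modulo the relevant annihilators). This reduces to explicit mod-$2$ binomial-coefficient computations in the quotient $\Z_2[\wt{w}_2,\wt{w}_3]/(g_{n-2},g_{n-1},g_n)$, in the style of the calculations of Korba\v{s}--Ru\v{s}in and Petrovi\'{c}--Prvulovi\'{c}--Radovanovi\'{c}.
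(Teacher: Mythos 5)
Your upper-bound argument is correct and is essentially the paper's: by Theorem \ref{fifth} every class of degree below $2^t-4$ is a polynomial in $\wt{w}_2,\wt{w}_3$, and since $\pi^*(H^{3n-9}(G_{n,3}))=0$ a non-zero product realizing the top class must contain at least one factor of degree at least $2^t-4$; counting the remaining factors, each of degree at least $2$, gives $\texttt{cup}(\wt{G}_{n,3})\leq 1+\frac{3n-2^t-5}{2}=\frac{3(n-3)}{2}-2^{t-1}+3$.

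The lower bound, however, is not proved: you exhibit candidate products but defer their non-vanishing to unperformed ``mod-$2$ binomial-coefficient computations,'' and the identifications you propose to verify are not the right target. Nothing established so far forces $\wt{w}_2^{2^{t-1}-4}$ to equal $v_{2^t-8}$ modulo $\mathrm{Ann}(w_{3n-2^t-1})$ --- a priori $\wt{w}_2^{2^{t-1}-4}\cup w_{3n-2^t-1}$ could vanish --- nor does $\wt{w}_2^{n-2^{t-1}-1}\wt{w}_3^{(n-3)/3}$ have any reason to represent $v_{3n-2^t-5}$, which is $\wt{w}_2^{n-2^{t-1}-1}p_{n-3}$ with $p_{n-3}$ the (non-monomial) polynomial defined by $\wt{w}_3\,p_{n-3}=g_n+\wt{w}_2 g_{n-2}$. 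The paper avoids all computation by combining two inputs you do not invoke: (i) Korba\v{s}'s result \cite{Kor10} that $\wt{w}_2^{2^{t-1}-4}\neq0$ in $H^*(\wt{G}_{2^{t-1},3})$, hence in $H^*(\wt{G}_{n,3})$ by naturality of $\wt{w}_2$ under the inclusions $\wt{G}_{m,3}\hookrightarrow\wt{G}_{m+1,3}$; and (ii) the divisibility statement of Theorem \ref{relations}(a)(v) (absent from the abridged Theorem B(3)(b) you cite): every non-zero polynomial in $\wt{w}_2,\wt{w}_3$ divides $v_{2^t-8}$ or $v_{3n-2^t-5}$ by a monomial. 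This yields a dichotomy: either $\wt{w}_2^{2^{t-1}-4}=v_{2^t-8}$, so $\wt{w}_2^{2^{t-1}-4}w_{3n-2^t-1}$ is the top class and $\texttt{cup}\geq 2^{t-1}-3$; or $\wt{w}_2^{2^{t-1}-4}Q=v_{3n-2^t-5}$ for a monomial $Q$, so $\wt{w}_2^{a_1}\wt{w}_3^{b_1}w_{2^t-4}$ with $a_1\geq 2^{t-1}-4$ is the top class and the product is even longer. The bound $\frac{4(n-3)}{3}-2^{t-1}+3$ is then extracted from the factorization $v_{3n-2^t-5}=\wt{w}_2^{n-2^{t-1}-1}p_{n-3}$, which constrains any monomial representative $\wt{w}_2^{a_1}\wt{w}_3^{b_1}$ of $v_{3n-2^t-5}$ to satisfy $a_1+b_1\geq n-2^{t-1}-1+\frac{n-3}{3}$. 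Without this dichotomy (or the computations you postpone, one of which may simply fail), the lower bound does not go through.
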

\hf\hf In another direction we have studied characteristic rank of $\wt{G}_{n,k}$ ($k\geq 5$) and upper characteristic rank of $\wt{G}_{n,3}.$ If $X$ is a connected finite CW complex and $\xi$ is a real (finite rank) vector bundle over $X$, recall from \cite{NaTh14} that the characteristic rank of $\xi$ over $X$, denoted by $\texttt{charrank}_X(\xi)$, is by definition the largest integer $k \leq \texttt{dim}(X)$ such that every cohomology class $x\in H^j (X;\mathbb{Z}_2)$, $0\leq  j\leq k,$ is a polynomial in the Stiefel-Whitney classes $w_i(\xi)$. The upper characteristic rank of $X$, denoted by $\texttt{ucharrank}(X)$, is the maximum of $\texttt{charrank}_X (\xi)$ as $\xi$ varies over vector bundles over $X$. In \cite{Kor15} it was shown that 
\bgd
\texttt{charrank}_{\,\wt{G}_{n,k}}(\wt\gamma_{n,k})\geq n-k+1\,\,\textup{for $k\geq 5$},
\edd
where $\wt\gamma_{n,k}$ is the oriented tautological bundle over $\wt{G}_{n,k}$. Our result (cf. Theorem \ref{k5}, Corollary \ref{ucharrank for k=3}) is the following.
\begin{thmc}
{\it If $k\geq 5$ and $n\geq 2k$, then characteristic rank of $\wt{\gamma}_{n,k}$ is at least $n-k+2$.\\
The upper characteristic rank $\texttt{ucharrank}(\wt{G}_{n,3})=\texttt{charrank}(\wt{\gamma}_{n,3})$ for $n\geq 8$.}
\end{thmc}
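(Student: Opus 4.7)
For both parts the common tool is the Gysin exact sequence of the orientation double cover $\pi\ls\wt G_{n,k}\to G_{n,k}$ with $\Z_2$-coefficients,
\bgd
\cdots\to H^{j-1}(G_{n,k})\xrightarrow{\cup w_1}H^j(G_{n,k})\xrightarrow{\pi^*}H^j(\wt G_{n,k})\to H^j(G_{n,k})\xrightarrow{\cup w_1}H^{j+1}(G_{n,k})\to\cdots,
\edd
from which $\pi^*$ surjects in degree $j$ if and only if $\cup w_1$ is injective on $H^j(G_{n,k})$. Since $\pi^* w_1=0$ and $H^*(G_{n,k})$ is generated by $w_1,\ldots,w_k$, the image of $\pi^*$ equals the subalgebra of $H^*(\wt G_{n,k})$ generated by the Stiefel-Whitney classes of $\wt\gamma_{n,k}$; consequently $\texttt{charrank}(\wt\gamma_{n,k})\geq j$ as soon as $\pi^*$ surjects in all degrees $\leq j$.

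For the first claim, the bound of \cite{Kor15} supplies injectivity of $\cup w_1$ for $j\leq n-k+1$, and we only need to add the single case $j=n-k+2$. With Borel's presentation $H^*(G_{n,k})=\Z_2[w_1,\ldots,w_k]/(\bar w_{n-k+1},\ldots,\bar w_n)$, the ideal of relations in degree $n-k+3$ is spanned over $\Z_2$ by $\bar w_{n-k+3}$, $w_1\bar w_{n-k+2}$, $w_1^2\bar w_{n-k+1}$ and $w_2\bar w_{n-k+1}$. Given $w_1\alpha\equiv 0$ in this degree, matching monomials in $\Z_2[w_1,\ldots,w_k]$ forces the $w_1$-free part of the right hand side to vanish; under $k\geq 5$ and $n\geq 2k$ a direct inspection of the relevant low-degree monomials (using that $w_j$ is no longer a variable for $j>k$) kills the coefficients of $\bar w_{n-k+3}$ and $w_2\bar w_{n-k+1}$, after which $w_1$ factors out of the remainder, giving $\alpha\in(\bar w_{n-k+1},\bar w_{n-k+2})$, i.e.\ $\alpha=0$ in $H^{n-k+2}(G_{n,k})$.

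For the second claim the inequality $\texttt{ucharrank}(\wt G_{n,3})\geq \texttt{charrank}(\wt\gamma_{n,3})$ is immediate. Let $d$ denote the smallest degree of an indecomposable of $H^*(\wt G_{n,3};\Z_2)$ past $3$, so $d\in\{2^t-1,\,2^t-4,\,3n-2^t-1\}$ by Theorem A and $\texttt{charrank}(\wt\gamma_{n,3})=d-1$. For any bundle $\xi$ over $\wt G_{n,3}$ we have $w_1(\xi)=0$ (since $H^1(\wt G_{n,3};\Z_2)=0$ for $n\geq 6$) and $w_i(\xi)\in\pi^*H^i(G_{n,3})$ for $i<d$ by minimality of $d$; it remains to show $w_d(\xi)\in\pi^*H^d(G_{n,3})$. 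Assuming for contradiction that $w_d(\xi)=\pi^*\beta+w_d$, the plan is to apply Wu's formula to $Sq^i w_d(\xi)$ with $i<d$ chosen so that $\binom{d+t-i-1}{t}\equiv 0\pmod 2$ for every $t\geq 1$; the right hand side then collapses to $w_i(\xi)\cdot w_d(\xi)$, so that modulo $\pi^*H^*(G_{n,3})$ one has $Sq^i w_d\equiv w_i(\xi)\cdot w_d$. A direct computation in each of the three regimes of $n$, carried out using the squaring relations and the Poincar\'e-dual classes $v_{2^t-8},\,v_{3n-2^t-5}$ of Theorem B, then shows that $Sq^i w_d$ modulo $\pi^*H^*(G_{n,3})$ has a nontrivial component lying outside $\pi^*H^*(G_{n,3})\cdot w_d$, yielding the desired contradiction.

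The main obstacle is this final Steenrod computation: one must identify, for each regime of $n\in(2^{t-1},2^t]$, an $i$ for which the binomial coefficients in Wu's formula collapse as above and then isolate a summand of $Sq^i w_d$ in the explicit cohomology ring of Theorem B that cannot be absorbed into $\pi^*H^*(G_{n,3})\cdot w_d$. The interplay between these mod-$2$ binomial coefficients and the relations $g_{n-2},g_{n-1},g_n$ together with the squaring relation for $w_d$ from Theorem B is what makes the three-case analysis the technical heart of the argument.
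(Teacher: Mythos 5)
Your reduction of both halves to the Gysin sequence of the double cover is the right frame, but each half has a gap at its decisive step. For the first claim, the entire difficulty is hidden in the phrase ``a direct inspection of the relevant low-degree monomials kills the coefficients of $\bar w_{n-k+3}$ and $w_2\bar w_{n-k+1}$.'' The degree $n-k+3\geq k+3\geq 8$ is not low and grows with $n$; what is actually needed is that $\bar w_{n-k+3}$ contains a monomial with odd coefficient that is free of \emph{both} $w_1$ and $w_2$, so that it cannot occur in $w_1\alpha$, $w_1^2\bar w_{n-k+1}$, $w_1\bar w_{n-k+2}$, or $w_2\bar w_{n-k+1}$. The paper obtains this from a genuinely nontrivial combinatorial lemma (Proposition \ref{3,4,5}): every integer $i\neq 1,2,7$ can be written as $3p_1+4p_2+5p_3$ with the binary supports of $p_1,p_2,p_3$ pairwise disjoint, whence by Lucas' theorem the monomial $w_3^{p_1}w_4^{p_2}w_5^{p_3}$ appears in $\bar w_i$ with coefficient $1$ (this is also where $k\geq 5$ enters, since $w_5$ must be an honest variable). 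That forces $a_4=0$; killing $a_2$ then uses $g_{n-k+1}\neq 0$ for $k\geq 5$ (Lemma \ref{Korbas}(ii)), which you also do not invoke. Without these two inputs the monomial-matching does not close.

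For the second claim your plan is genuinely different from the paper's and does not close as written. You propose to apply $Sq^i$ to $w_d(\xi)$ itself and to derive a contradiction from the value of $Sq^i w_d$ modulo $\pi^*H^*(G_{n,3})$. That final step requires computing the Steenrod action on the indecomposable $w_d$ --- a class that is not canonically chosen and whose $Sq^i$ is not determined by the ring structure of Theorem B --- and you give no argument that an $i$ exists for which the Wu coefficients vanish \emph{and} $Sq^i w_d$ escapes $\pi^*H^*(G_{n,3})\cdot w_d+\pi^*H^*(G_{n,3})$. The paper avoids this entirely (Theorem \ref{uchar}): writing $d=2^ts$ with $s$ odd, the fact that $d$ is never a power of $2$ (checked for $2^t-1$, $2^t-4$, and $3n-2^t-1$) gives $s>1$, and one applies $Sq^{2^t}$ to the \emph{lower-degree} class $w_{d-2^t}(\xi)$, which already lies in $\pi^*H^*(G_{n,3})$. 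Wu's formula then exhibits $w_d(\xi)$, with the odd coefficient $\binom{d-2^t-1}{2^t}$, as a combination of classes in $\pi^*H^*(G_{n,3})$, so $w_d(\xi)\in\pi^*H^d(G_{n,3})$ directly --- no contradiction argument and no Steenrod computation on indecomposables is needed. I recommend replacing your final step by this one.
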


\label{prelim}\section{Cohomology rings of relevant spaces}

\subsection{Preliminaries}

\hf\hf We start by recalling some known results. The $\mathbb{Z}_2$-cohomology ring of $G_{n,k}$ is classically well-known due to Borel \cite{Bor53}. We assume that $n\geq 2k$ as there is a diffeomorphism between $G_{n,k}$ and $G_{n,n-k}$. There is an isomorphism of rings
\bgd
H^*(G_{n,k};\mathbb{Z}_2)\cong \frac{\mathbb{Z}_2[w_1,\cdots,w_k]}{\langle \bar{w}_{n-k+1},\cdots, \bar{w}_{n}\rangle}
\edd
where $\langle \bar{w}_{n-k+1},\cdots, \bar{w}_{n}\rangle$ is the ideal generated by $\bar{w}_{n-k+1},\cdots,\!\bar{w}_n.$ Here $w_i$ is the $i^{\textup{th}}$ Stiefel-Whitney class of the tautological $k$-plane bundle $\gamma$ over $G_{n,k}$. Moreover, $\bar{w}_i$ is the homogeneous component of $(1+w_1+\cdots + w_k)^{-1}$ in degree $i$. Let $g_i(w_2,\cdots,w_k)$ (abbreviated to $g_i$ here after) be the reduction of $\bar{w}_i$ modulo $w_1$. We also have the following result from \cite{Kor15}, which we shall use in the proofs of our results.
\begin{lmm} \label{Korbas}
\textup{(i)} If $k=3$ then $g_i(w_2,w_3)=0$ if and only if $i=2^r-3$ for some $r\geq3.$\\
\textup{(ii)} If $k\geq5$, then for $i\geq2$, the expression $g_i(w_2,\cdots,w_k)$ is never zero.
\end{lmm}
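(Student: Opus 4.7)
The strategy is to translate the vanishing of $g_i$ into a combinatorial question about parities of multinomial coefficients, then to prove both parts via a short recursion. Starting from the identity $(1 + w_2 t^2 + \cdots + w_k t^k)^{-1} = \sum_{m \geq 0}(w_2 t^2 + \cdots + w_k t^k)^m$ in $\mathbb{Z}_2[[t]][w_2,\ldots,w_k]$, one reads off
\begin{equation*}
g_i(w_2, \ldots, w_k) = \sum_{\substack{a_2,\ldots,a_k \geq 0 \\ 2a_2 + \cdots + k a_k = i}} \binom{a_2 + \cdots + a_k}{a_2, \ldots, a_k} \, w_2^{a_2} \cdots w_k^{a_k}.
\end{equation*}
By Lucas's theorem, a multinomial coefficient is odd precisely when the binary expansions of the $a_j$'s are pairwise disjoint, so $g_i = 0$ iff no such ``disjoint'' decomposition of $i$ as a weighted sum exists.

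For part (i), specialize to $k=3$. Splitting the sum over $(a,b)$ by the four parity classes and applying Lucas bit-by-bit (in particular, the least significant bit forces $b$ to be odd when $i$ is odd, and $b$ even when $i$ is even) yields the mod-$2$ recursions
\begin{equation*}
g_i = w_3 \, g_{(i-3)/2}^{\,2} \ \ (i \text{ odd}), \qquad g_i = g_{i/2}^{\,2} + w_2 \, g_{i/2-1}^{\,2} \ \ (i \text{ even}),
\end{equation*}
with conventions $g_0 = 1$ and $g_1 = 0$. I would then run strong induction on $i$, starting from $g_2 = w_2$, $g_3 = w_3$, $g_4 = w_2^2$. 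For $i$ even $\geq 6$, the two summands on the right have $w_2$-exponents of opposite parity, so no cancellation occurs; moreover they cannot both vanish because the consecutive integers $i/2$ and $i/2-1$ cannot both lie in the set $\{2^r - 3 : r \geq 3\}$ (otherwise $2^r - 2^{r'} = 1$, impossible). For $i$ odd, $w_3$ is not a zero-divisor, so $g_i = 0$ iff $g_{(i-3)/2} = 0$; by induction this occurs iff $(i-3)/2 = 2^{r-1} - 3$, equivalently $i = 2^r - 3$.

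For part (ii), observe first that the monomials in $g_i(w_2, \ldots, w_k)$ involving no $w_j$ with $j \geq 4$ form exactly the polynomial $g_i(w_2, w_3)$, because the multinomial coefficient collapses to $\binom{a_2+a_3}{a_2}$. Thus by part (i), $g_i \neq 0$ automatically whenever $i \geq 2$ and $i \neq 2^r - 3$. For the remaining indices $i = 2^r - 3$ with $r \geq 3$, I would use the hypothesis $k \geq 5$ by setting $a_5 = 1$; after writing $a_2 = 2\alpha$, $a_3 = 2\beta$ (forced by the disjointness of $a_2, a_3$ with $1$ at the LSB), the problem reduces to finding $(\alpha,\beta)$ with $2\alpha + 3\beta = 2^{r-1} - 4$ and $\binom{\alpha+\beta}{\alpha}$ odd, i.e.\ to exhibiting a nonzero monomial in $g_{2^{r-1}-4}(w_2, w_3)$. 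Part (i) applies, since $2^{r-1} - 4 = 2^s - 3$ would force $2^{r-1} - 2^s = 1$ with $s \geq 3$, which has no solution. The edge case $r = 3$ is direct: $w_5$ itself is a nonzero term of $g_5$.

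\textbf{Main obstacle.} The delicate point is deriving the mod-$2$ recursions cleanly: the least-significant-bit analysis via Lucas is where sign/parity errors are easiest to make. Once those recursions are in hand, the arithmetic observation that $\{2^r - 3 : r \geq 3\}$ contains no two consecutive integers (and that $2^{r-1}-4$ is never of the form $2^s - 3$) makes the induction essentially mechanical.
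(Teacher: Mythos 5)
Your proposal is correct. Note that the paper does not prove this lemma at all --- it is quoted from \cite{Kor15} (Lemma 2.3 there) --- so there is no in-paper argument to compare against; what you have written is a self-contained proof of the cited result. I checked the key steps: the multinomial formula for $g_i$ and the Lucas--Kummer disjointness criterion are right; the two mod-$2$ recursions $g_i=w_3g_{(i-3)/2}^2$ ($i$ odd) and $g_i=g_{i/2}^2+w_2g_{i/2-1}^2$ ($i$ even) do follow from the least-significant-bit analysis; the no-cancellation observation (opposite parities of the $w_2$-exponent) and the fact that $\{2^r-3: r\ge 3\}$ contains no two consecutive integers close the induction for (i); and for (ii) the reduction via $a_5=1$, $a_2=2\alpha$, $a_3=2\beta$ to the nonvanishing of $g_{2^{r-1}-4}(w_2,w_3)$ (which is even in degree, hence never of the odd form $2^s-3$) is valid, with $w_5$ handling $r=3$. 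Two small points worth making explicit in a final write-up: the statement of (i) implicitly assumes $i\ge 2$ (since $g_1=0$ but $1=2^2-3$ is excluded by $r\ge 3$), and the case $i=5$ of the odd recursion falls outside the inductive range $(i-3)/2\ge 2$, so it should be listed with the base cases via the convention $g_1=0$. Your route (generating function plus Lucas) is arguably more elementary and more uniform than Korba\v{s}'s original treatment, which leans on the recursion $g_i=w_2g_{i-2}+w_3g_{i-3}$ and prior computations in the cohomology of $G_{n,3}$; in particular your argument for (ii) cleanly isolates why $k=4$ must be excluded while $k\ge 5$ suffices.
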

\hf\hf Recall the following well-known result. Consider the Gysin sequence for the sphere bundle $S^{n-1}\hookrightarrow Y\xrightarrow{p}X$ associated to a real vector bundle $\mathbb{R}^n\hookrightarrow\xi\to X$ (cf. \cite[p.~144]{MiSt}).
\bgd
\cdots \to H^i(X)\xrightarrow{\cup \,w_n}H^{i+n}(X)\xrightarrow{p^*}H^{i+n}(Y)\to H^{i+1}(X)\xrightarrow{\cup\,w_n}H^{i+n+1}(X)\to\cdots
\edd
where $w_n\in H^n(X;\Z_2)$ is the $n^\textup{th}$ Stiefel-Whitney class of $\xi$. It is well known that the above Gysin sequence is the degeneration of the Serre spectral sequence associated to the sphere bundle. The only possible non-zero differential in the spectral sequence is $d^n:E_n^{p,n-1}\to E_n^{p+n,0}$. Hence, we need only compute $d^n:E_n^{0,n-1}\to E_n^{n,0}$. There is a filtration $0\subset F_n^n\subset F_{n-1}^n\subset \cdots\subset F_0^n=H^n(Y)$. We know from \cite{Bor53} that
\bgd
H^n(X)\xrightarrow{i_*} H^n(X;H^0(S^{n-1}))=E_2^{n,0}\to E_{n+1}^{n,0}=E_\infty^{n,0}\hookrightarrow H^n(Y)
\edd
is the map $p^*.$ From the Gysin sequence, we know that the kernel of the map is $\mathbb{Z}_2 w_n$. Therefore, $d^n(1)=w_n$ where $1\in H^0(X;H^{n-1}(S^{n-1}))$ is the generator.\\[0.2cm]
\hf\hf We shall introduce three fibre bundles which will be important in what follows.\\[0.2cm]
(i) {\bf (double cover)}\hf The universal covering map is a principal $\Z_2$-bundle of the form 
\bgd\label{2covgn3}
\mathbb{Z}_2\hookrightarrow \wt{G}_{n,3}\xrightarrow{\pi} G_{n,3}.
\edd
The associated Gysin sequence (with $\Z_2$-coefficients) is written below:\\
\bgd
\cdots \to H^{j-1}(G_{n,3})\xrightarrow{\cup w_1}H^j(G_{n,3})\xrightarrow{\pi^*}H^j(\wt{G}_{n,3})\to H^j(G_{n,3})\xrightarrow{\cup w_1}H^{j+1}(G_{n,3})\to\cdots \label{Gysinw1}
\edd
We have that 
\bgd
\pi^\ast(w_1)=0,\,\,\wt{w}_2:=\pi^\ast(w_2)\neq 0,\,\,\wt{w}_3:=\pi^\ast(w_3)\neq 0.
\edd
For $j+1<n-2$, $H^j(G_{n,3})\xrightarrow{\cup\,w_1} H^{j+1}(G_{n,3})$ is injective, so $H^j(\wt{G}_{n,3})$ consists of all the homogeneous polynomials of degree $j$ built out of $\wt{w}_2$ and $\wt{w}_3$. As $n\geq 6$, $j=2$ then $j+1$ is always less than $n-2$. So $H^2(\wt{G}_{n,3})=\wt{w}_2 \mathbb{Z}_2$. This also implies that for $n\geq 7$, $H^3(\wt{G}_{n,3})=\wt{w}_3\Z_2$. When $n=6$, we still can check that $H^3(\wt{G}_{6,3})=\wt{w}_3\Z_2$. Thus, $H^3(\wt{G}_{n,3})=\wt{w}_3 \mathbb{Z}_2$ for any $n\geq 6$. \\[0.2cm]
\hf\hf In \cite[Theorem 2.1(1)]{Kor15} it has been shown that up to degree at most $n-2$, the map $H^*(G_{n,3})\xrightarrow{\pi^*}H^*(\wt{G}_{n,3})$ is surjective if $n\neq 2^t,~2^t-1,~2^t-2,~ 2^t-3.$
\begin{prpn}\label{P0}
The cohomology map $H^*(G_{n,3})\xrightarrow{\pi^*}H^*(\wt{G}_{n,3})$ is surjective upto degree $n-1$ when $n\neq 2^t,~2^t-1,~2^t-2,~2^t-3.$
\end{prpn}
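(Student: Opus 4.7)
The plan is to use the Gysin sequence of the double cover displayed in the excerpt. From the four-term exact segment
\bgd
H^{n-1}(G_{n,3})\xrightarrow{\pi^\ast} H^{n-1}(\wt{G}_{n,3})\xrightarrow{\delta} H^{n-1}(G_{n,3})\xrightarrow{\cup w_1}H^n(G_{n,3}),
\edd
surjectivity of $\pi^\ast$ in degree $n-1$ is equivalent to injectivity of $\cup w_1\ls H^{n-1}(G_{n,3})\to H^n(G_{n,3})$. Korba{\v s}'s result already delivers surjectivity in degrees $\leq n-2$ under exactly the hypothesis imposed, so the task reduces to proving this one additional injectivity statement.

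To establish it, I would work in the Borel presentation $H^\ast(G_{n,3}) = \mathbb{Z}_2[w_1,w_2,w_3]/I$ with $I = \langle \bar w_{n-2}, \bar w_{n-1}, \bar w_n\rangle$. Given a homogeneous $f$ of degree $n-1$ with $w_1 f\in I$, I write
\bgd
w_1 f = A\bar w_{n-2}+B\bar w_{n-1}+C\bar w_n,\qquad \deg A=2,\ \deg B=1,\ \deg C=0.
\edd
Since the only degree-$1$ monomial in the graded polynomial ring is $w_1$ itself, automatically $B=\beta w_1$, while $A=\alpha w_2+w_1 A'$ and $C=\gamma$ for scalars $\alpha,\beta,\gamma\in\mathbb{Z}_2$. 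Reducing modulo $w_1$ and using $\bar w_i\equiv g_i \pmod{w_1}$ yields the polynomial identity
\bgd
\alpha\, w_2\, g_{n-2}+\gamma\, g_n=0 \quad \text{in } \mathbb{Z}_2[w_2,w_3].
\edd

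The crux is to force $\alpha=\gamma=0$. Here I would invoke the recursion $g_n=w_2\, g_{n-2}+w_3\, g_{n-3}$, which follows at once from $(1+w_2+w_3)\sum_i g_i=1$, to rewrite the identity as $(\alpha+\gamma)w_2 g_{n-2}+\gamma w_3 g_{n-3}=0$. By Lemma \ref{Korbas}(i), $g_i$ vanishes precisely when $i=2^r-3$, so the hypothesis $n\notin\{2^t,2^t-1,2^t-2,2^t-3\}$ guarantees that each of $g_{n-3},g_{n-2},g_n$ is nonzero (corresponding to excluding $n=2^r,2^r-1,2^r-3$). A brief case check on the four possibilities $(\alpha,\gamma)\in\mathbb{Z}_2^2$ then forces $\alpha=\gamma=0$.

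With this, the equation collapses to $w_1 f=w_1(A'\bar w_{n-2}+\beta\bar w_{n-1})$, and cancelling $w_1$ in the integral domain $\mathbb{Z}_2[w_1,w_2,w_3]$ gives $f=A'\bar w_{n-2}+\beta\bar w_{n-1}\in I$, as required. The main obstacle, though technically elementary, is the mod-$w_1$ non-vanishing step; what makes it clean is the sharp criterion of Lemma \ref{Korbas}(i), which matches the hypothesis on $n$ almost perfectly and reduces the argument to a handful of monomial degree checks.
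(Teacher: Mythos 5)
Your proposal is correct and follows essentially the same route as the paper: reduce via the Gysin sequence to injectivity of $\cup\, w_1$ in degree $n-1$, expand $w_1 f$ in the ideal $\langle\bar w_{n-2},\bar w_{n-1},\bar w_n\rangle$, reduce mod $w_1$ to the identity $\alpha\, w_2 g_{n-2}+\gamma\, g_n=0$, and rule out the nonzero $(\alpha,\gamma)$ via Lemma \ref{Korbas}(i) together with the recursion $g_n=w_2g_{n-2}+w_3g_{n-3}$ (the paper performs exactly this case check with its coefficients $a_2,a_4$). No substantive differences.
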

\begin{proof}
Consider the Gysin sequence \eqref{Gysinw1}. It follows that $H^j(G_{n,3})\xrightarrow{\pi^*}H^j(\wt{G}_{n,3})$ is surjective if and only if the subgroup
\bgd
\text{ker}(H^j(G_{n,3})\xrightarrow{\cup w_1}H^{j+1}(G_{n,3}))\label{kergtogt}
\edd
vanishes. So, we only need to show that when $n\neq2^t,~2^t-1,~2^t-2,~2^t-3$, the subgroup defined in \eqref{kergtogt} vanishes up to degree $n-1$.
Let $x\neq0\in H^{n-1}(G_{n,3})$ be such that $w_1\cup x=0\in H^n(G_{n,3}).$ Then 
\bgd
w_1\cup x=a_1w_1^2\bar{w}_{n-2}+a_2w_2\bar{w}_{n-2}+a_3w_1\bar{w}_{n-1}+a_4\bar{w}_n.
\edd
Reducing the above equation modulo $w_1$, we get $0=a_2w_2g_{n-2}+a_4g_n.$ Here $a_2=0$ implies $a_4g_n=0$. As $n\neq 2^t-3$ implies $g_n\neq 0$ (cf. Lemma 2.3 \cite{Kor15}) we get $a_4=0$. Similarly, $a_4=0$ implies $a_2=0.$ Also, $a_2=a_4=1\in\mathbb{Z}_2$ gives $g_n+w_2g_{n-2}=0$, which gives $w_3g_{n-3}=0$ which is a contradiction as $n\neq 2^t.$ Therefore, $a_2=a_4=0$ and 
\bgd
x=a_1w_1\bar{w}_{n-2}+a_3\bar{w}_{n-1}.
\edd
Thus, $x=0\in H^{n-1}(G_{n,3}),$ which proves that $\text{ker}(H^{n-1}(G_{n,3})\xrightarrow{\cup w_1}H^n(G_{n,3}))$ vanishes. 
\end{proof}
\hf\hf We will also require the following related result.
\begin{lmm}\label{wtGysinw3}
The map
\bgd
\cup\,\wt{w}_3:H^j(\wt{G}_{n,3})\longrightarrow  H^{j+3}(\wt{G}_{n,3})
\edd
is injective if $j\leq n-6$. If $n$ is even then the above map is injective also for $j\leq n-5$.
\end{lmm}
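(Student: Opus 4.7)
The plan is to reduce the injectivity of $\cup\wt{w}_3$ to a divisibility question in the polynomial ring $\Z_2[w_1,w_2,w_3]$ via the Gysin sequence \eqref{Gysinw1} of the double cover $\pi:\wt{G}_{n,3}\to G_{n,3}$. Since $j\leq n-5\leq n-4$, the map $\cup w_1:H^j(G_{n,3})\to H^{j+1}(G_{n,3})$ is injective, so $\pi^\ast$ is surjective in degree $j$ with kernel $w_1\cdot H^{j-1}(G_{n,3})$; combined with the absence of Borel relations below degree $n-2$, this identifies $H^j(\wt{G}_{n,3})$ with $\Z_2[\wt{w}_2,\wt{w}_3]_j$. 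Given $x\in H^j(\wt{G}_{n,3})$ with $\wt{w}_3 x=0$, I would lift $x$ to a $w_1$-free polynomial $\hat{x}\in\Z_2[w_2,w_3]_j\subset H^j(G_{n,3})$ and translate $\pi^\ast(w_3\hat{x})=0$ into an equation $w_3\hat{x}=w_1 y+c\,\bar{w}_{n-2}$ in $\Z_2[w_1,w_2,w_3]_{j+3}$ for some $c\in\Z_2$, where the last summand is absent unless $j+3=n-2$.

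In the main range $j\leq n-6$, we have $j+3\leq n-3$, so no Borel relation appears and the identity reduces to $w_3\hat{x}=w_1 y$ in the polynomial ring. Since $w_1$ and $w_3$ are coprime and $\hat{x}$ is $w_1$-free, unique factorization immediately forces $\hat{x}=0$ and hence $x=0$. For the sharper range with $n$ even and $j=n-5$, the relation $\bar{w}_{n-2}$ becomes relevant. Writing $\bar{w}_{n-2}=w_1 a+g_{n-2}$ with $g_{n-2}\in\Z_2[w_2,w_3]$ the reduction modulo $w_1$, and extracting the $w_1$-free component using the direct sum decomposition $\Z_2[w_1,w_2,w_3]=\Z_2[w_2,w_3]\oplus w_1\Z_2[w_1,w_2,w_3]$, I would arrive at $w_3\hat{x}=c\,g_{n-2}$ in $\Z_2[w_2,w_3]$.

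The crux, and the step I expect to be the main obstacle, is to show that for $n$ even $w_3$ does not divide $g_{n-2}$ in $\Z_2[w_2,w_3]$, which forces $c=0$ and completes the argument. I would carry this out by expanding $(1+w_2+w_3)^{-1}=\sum_{k\geq 0}(w_2+w_3)^k$ and tracking the monomials of degree $n-2$: since $n-2$ is even, taking $k=(n-2)/2$ and $m=0$ in the binomial expansion $(w_2+w_3)^k=\sum_m\binom{k}{m}w_2^{k-m}w_3^m$ yields the monomial $w_2^{(n-2)/2}$ with coefficient $1$, which is not divisible by $w_3$. The same parity calculation explains the sharpness of the even-$n$ hypothesis: for $n$ odd, every monomial contributing to $g_{n-2}$ carries a positive power of $w_3$, so $g_{n-2}/w_3\in\Z_2[w_2,w_3]_{n-5}$ furnishes an explicit nonzero element of $H^{n-5}(\wt{G}_{n,3})$ annihilated by $\wt{w}_3$.
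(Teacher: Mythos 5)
Your proof is correct and follows essentially the same route as the paper: both reduce mod $w_1$ to the polynomial ring $\Z_2[w_2,w_3]$, observe that no Borel relation interferes when $j+3\leq n-3$, and in the boundary case $j+3=n-2$ use that $g_{n-2}$ contains the monomial $w_2^{(n-2)/2}$ with coefficient $1$ for $n$ even, so $w_3\nmid g_{n-2}$. (Your closing sharpness remark needs the caveat that $g_{n-2}$ can vanish, namely when $n=2^r-1$, but that aside is not part of the lemma.)
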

\begin{proof}
\hf It follows from the commutative diagram
\bgd
\xymatrix{
H^j(\wt{G}_{n,3})\ar[r]^-{\cup\,\wt{w}_3} & H^{j+3}(\wt{G}_{n,3})\\
H^j(G_{n,3})\ar[r]^-{\cup\,w_3}\ar[u]^-{\pi^\ast} & H^{j+3}(G_{n,3})\ar[u]_-{\pi^\ast}}
\edd
and \eqref{Gysinw1} that we have an induced commutative diagram 
\bge
\label{Gysinw3}\xymatrix{
H^j(\wt{G}_{n,3})\ar[r]^-{\cup\,\wt{w}_3} & H^{j+3}(\wt{G}_{n,3})\\
H^j(G_{n,3})/(w_1)\ar[r]^-{\cup\,w_3}\ar[u]^-{\bar{\pi}^\ast} & H^{j+3}(G_{n,3})/(w_1)\ar[u]_-{\bar{\pi}^\ast}}
\ede
where the vertical arrows are injective. For $j\leq n-4$ we know that $\pi^\ast:H^j(G_{n,3})\to H^{j}(\wt{G}_{n,3})$ is surjective. Therefore, the map $\bar{\pi}^\ast$ is an isomorphism. As the cohomology ring of $G_{n,3}$, up to degree $n-3$ is represented by polynomials in $w_2$ and $w_3$ with no relations, the map 
\bgd
\cup\,w_3:H^j(G_{n,3})/(w_1)\longrightarrow H^{j+3}(G_{n,3})/(w_1)
\edd
is injective whenever $j+3\leq n-3$, or, equivalently $j\leq n-6$. We can now conclude that the top horizontal map in \eqref{Gysinw3} is injective for $j\leq n-6$. If $n$ is even and $j=n-5$ then $H^{j+3}(G_{n,3})$ has only one relation given by $\bar{w}_{n-2}$. As $n$ is even, the coefficient of $w_2^{n/2-1}$ in $\bar{w}_{n-2}$ is one. Thus, the lower horizontal arrow in \eqref{Gysinw3} is injective, whence the top horizontal arrow is also injective.
\end{proof}
\vspace*{0.2cm}
(ii) {\bf ($2$-sphere bundle)}\hf We shall introduce an intermediate space, for our purposes of studying $\wt{G}_{n,3}$. Towards that we consider the following space
\bgd
W_{2,1}^n:=\left\{(\wt{P},v)\in \wt{G}_{n,2}\times S^{n-1}\,|\,\mbox{$\wt{P}$ and $v$ are orthogonal}\right\}.
\edd
Consider the bundle
\bge\label{s2wn21}
S^2\hookrightarrow W_{2,1}^n\xrightarrow{\mathpzc{sp}} \wt{G}_{n,3},\,\,\,\,\spn(\wt{P},v):= \langle\wt{P},v\rangle
\ede
where $\langle \wt{P},v\rangle$ is the oriented $3$-plane generated by $\wt{P}$ and $v$, in order. This is the sphere bundle of the oriented tautological bundle $\wt{\gamma}_{n,3}\to \wt{G}_{n,3}$. The fiber over an oriented $3$-plane is given by the unit vectors in it, i.e., it is $S^2$. The space $W_{2,1}^n$ is simply connected. Consider the associated Gysin sequence:\\
$$\cdots\to H^{j-3}(\wt{G}_{n,3})\xrightarrow{\cup \wt{w}_3}H^j(\wt{G}_{n,3})\xrightarrow{\spn^\ast}H^j(W_{2,1}^n)\to H^{j-2}(\wt{G}_{n,3})\xrightarrow{\cup \wt{w}_3}H^{j+1}(\wt{G}_{n,3})\to\cdots$$
where $\wt{w}_3\in H^3(\wt{G}_{n,3})$ is the (mod $2$ reduction of the) Euler class of the bundle. As $W_{2,1}^n$ is simply connected, $H^1(W_{2,1}^n)=0$. 
%Consider the sphere bundle 
%\bge\label{sn-3wn21}
%S^{n-3}\hookrightarrow W_{2,1}^n\xrightarrow{p}\wt{G}_{n,2},\,\,(\wt{P},v)\mapsto\wt{P}
%\ede
%which is also the sphere bundle of $\wt{\gamma}_{n,3}^\perp\to \wt{G}_{n,3}$. The Gysin sequence implies that $H^i(\wt{G}_{n,2})\xrightarrow{p^*}H^i(W_{2,1}^n)$ are isomorphisms for all $i< n-3$. As $n\geq8$, putting $i=2$ we get $H^2(W_{2,1}^n)=H^2(\wt{G}_{n,2})$. So, rank of $H^2(W_{2,1}^n)$ is $1$, which leads to a contradiction to the fact that $E=0.$ Therefore the Euler class for the sphere bundle in \eqref{s2wn21} is given by $E=\wt{w}_3.$ 
We also conclude that $H^2(W_{2,1}^n)=\spn^\ast(\wt{w}_2) \mathbb{Z}_2,$ where $\spn^\ast(\wt{w}_2)\neq0$. We shall denote $\spn^\ast(\wt{w}_2)$ by $\w_2$.\\[0.3cm]

(iii) {\bf (circle bundle)}\hf To introduce the following bundle, let us recall Stiefel manifolds. The real Stiefel manifold $V_k(\mathbb{R}^n)$ is the space of all orthonormal $k$-tuples in $\mathbb{R}^n$. It is a classical result \cite{Bor53}, due to Borel, that the cohomology ring of $V_k(\mathbb{R}^n)$ with $\mathbb{Z}_2$-coefficients can be described explicitly:
\bge\label{V3n}
H^\ast(V_k(\R^n);\Z_2)\cong \Z_2[a_{n-k},a_{n-k+1},\cdots,a_{n-1}]/I
\ede
with $a_i\in H^i(V_k(\mathbb{R}^n);\Z_2)$ and $I$ being the ideal generated by $a_i^2=a_{2i}$ when $2i\leq n-1$ and $a_i^2=0$ otherwise.
\bgd\label{s1v3n}
S^1\hookrightarrow V_3(\mathbb{R}^n)\xrightarrow{p} W_{2,1}^n,\,\,\,\,p(v_1,v_2,v_3):= ( \langle v_1,v_2\rangle,v_3)
\edd
where $\langle v_1,v_2\rangle$ is the oriented $2$-plane generated by the oriented bases $\{v_1,v_2\}$. In fact, this is a principal $S^1$-bundle with the action of the circle on $V_3(\R^n)$ given by rotating the first two entries $\{v_1,v_2\}$ by the same angle along the plane spanned by the two vectors. The Gysin sequence correspoding to this sphere bundle is 
$$\cdots \to H^{j-2}(W_{2,1}^n)\xrightarrow{\cup E}H^j(W_{2,1}^n)\xrightarrow{p^*}H^j(V_3(\mathbb{R}^n))\to H^{j-1}(W_{2,1}^n)\xrightarrow{\cup E}H^{j+1}(W_{2,1}^n)\to\cdots $$
where $E$, the Euler class mod $2$, is given by $E=\w_2.$

\begin{defn}[Decomposability]
We say that the cohomology ring of a space at a particular degree $r$ is decomposable with respect to a given filtration $F$ if 
\bgd
\bigoplus_{i+j=r, p+q=s,i,j>0}F_p^i\otimes F_q^j\to F_s^r
\edd
is surjective for all $s$. Moreover, we say that the $E_\infty$-page at degree $r$ is decomposable if 
\bgd
\bigoplus_{i+j=r^\prime,p+q=s,(i,p)\neq(0,0),(j,q)\neq(0,0))}E_\infty^{i,p}\otimes E_\infty^{j,q}\to E_\infty^{r^\prime,s}
\edd
is surjective for all $r^\prime+s=r$.
\end{defn}
\begin{prpn}\label{decomposable}
The Serre spectral sequence for cohomology of a fiber bundle comes with a filtration $F$. The cohomology ring of the total space is decomposable at degree $r$ with respect to the filtration $F$ if and only if $E_\infty$-page is decomposable at degree $r$.
\end{prpn}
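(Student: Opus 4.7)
The plan is to exploit the standard multiplicativity of the Serre filtration: the cup product carries $F_p^i\otimes F_q^j$ into $F_{p+q}^{i+j}$, and the associated graded ring is exactly $E_\infty$ with $F_p^i/F_{p+1}^i=E_\infty^{p,i-p}$ and with product induced from the cup product. Granting this, both directions reduce to index bookkeeping under the filtration, and there is no genuinely technical step.

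For the implication that total-space decomposability forces $E_\infty$-decomposability, I would fix $r'+s=r$, choose a class $[x]\in E_\infty^{r',s}$, lift it to $x\in F_{r'}^r$, and apply the hypothesis at $s=r'$ to write $x=\sum_\alpha a_\alpha b_\alpha$ with $a_\alpha\in F_{p_\alpha}^{i_\alpha}$, $b_\alpha\in F_{q_\alpha}^{j_\alpha}$, $i_\alpha+j_\alpha=r$, $p_\alpha+q_\alpha=r'$, and $i_\alpha,j_\alpha>0$. Reducing this identity modulo $F_{r'+1}^r$ yields $[x]=\sum_\alpha[a_\alpha]\cdot[b_\alpha]$ with $[a_\alpha]\in E_\infty^{p_\alpha,i_\alpha-p_\alpha}$ and $[b_\alpha]\in E_\infty^{q_\alpha,j_\alpha-q_\alpha}$. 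Since $i_\alpha>0$, the pair $(p_\alpha,i_\alpha-p_\alpha)$ cannot be $(0,0)$, and the same holds for the other factor, so the bidegree constraint demanded by $E_\infty$-decomposability is automatic.

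The converse proceeds by descending induction on the filtration index $s$. In a first-quadrant Serre spectral sequence $F_s^r=0$ whenever $s>r$, which gives the base case. For the inductive step, given $x\in F_s^r$, the assumption applied at bidegree $(s,r-s)$ writes $[x]=\sum_\alpha[a_\alpha]\cdot[b_\alpha]$ with $[a_\alpha]\in E_\infty^{i_\alpha,p_\alpha}$, $[b_\alpha]\in E_\infty^{j_\alpha,q_\alpha}$, $i_\alpha+j_\alpha=s$, $p_\alpha+q_\alpha=r-s$, and both bidegrees nontrivial. Lift each $[a_\alpha]$ to $a_\alpha\in F_{i_\alpha}^{i_\alpha+p_\alpha}$ and each $[b_\alpha]$ to $b_\alpha\in F_{j_\alpha}^{j_\alpha+q_\alpha}$; multiplicativity of $F$ gives $\sum_\alpha a_\alpha b_\alpha\in F_s^r$ with the same image in $E_\infty^{s,r-s}$, so $x-\sum_\alpha a_\alpha b_\alpha\in F_{s+1}^r$, which is decomposable by the inductive hypothesis. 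The point requiring care is that the condition $(i_\alpha,p_\alpha)\ne(0,0)$ on the $E_\infty$-side exactly matches positivity of the total degree $i_\alpha+p_\alpha$ of the lift $a_\alpha$, so the $i,j>0$ requirement in the total-space definition is satisfied. This matching is where the main obstacle, such as it is, lies; once one commits to the conventions for the Serre filtration it becomes tautological, and adding the decomposition of the error term to the $\sum_\alpha a_\alpha b_\alpha$ already formed finishes the inductive step.
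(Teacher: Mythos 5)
Your argument is correct and follows essentially the same route as the paper: the forward direction passes the surjectivity statement to the associated graded (the paper phrases this via quotient maps, you do it element-wise), and the converse is the same descending induction on the filtration index, correcting a lift by an error term in $F_{s+1}^r$. Your version is slightly more careful in making the base case ($F_s^r=0$ for $s>r$) and the matching of the positivity conditions explicit, but there is no substantive difference.
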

\begin{proof}
Suppose the cohomology ring is decomposable at degree $r$ with respect to the filtration $F$. Then we have\\
$$\bigoplus_{i+j=r, p+q=s,i,j>0}F_p^i\otimes F_q^j\to F_s^r\mbox{ is surjective for all $s$.}$$
This implies that 
$$\bigoplus_{i+j=r, p+q=s,i,j>0}F_p^i/F_{p+1}^i\otimes F_q^j/F_{q+1}^j\to F_s^r/F_{s+1}^r\mbox{ is surjective for all $s$.}$$
$$\Rightarrow\bigoplus_{i+j=r, p+q=s,i,j>0}E_\infty^{p,i-p}\otimes E_\infty^{q,j-q}\to E_\infty^{s,r-s}\mbox{ is surjective for all $s$.}$$
Thus the $E_\infty$-page at degree $r$ is decomposable.\\
\hf To prove the converse, suppose that the $E_\infty$-page at degree $r$ is decomposable. Choose $x\in F_s^r-F_{s+1}^r$. Then, $x\neq0\in E_\infty^{s,r-s}.$ Then $\bar{x}=\sum_k\bar{x}_k\cup \bar{y}_k$ where $x_k\in F_p^i$, $y_k\in F_q^j$ and $i,j>0$ with $p+q=s$, $i+j=r$. So, $x-\sum_k x_k\cup y_k\in F_{s+1}^r$. Now, by induction $F_{s+1}^r$ is decomposable. So, $x\in F_s^r$ is also decomposable. Therefore, the cohomology ring of the total space is decomposable at degree $r$ with respect to the filtration $F$.
\end{proof}

It follows from the definition that if the cohomology ring is decomposable at degree $r$ with respect to the filtration $F$, then the cohomology ring is decomposable at degree $r$.\\

\subsection{The space $\wt{G}_{n,2}$ and its cohomology ring}

\hf\hf The cohomology ring of $\wt{G}_{n,2}$ with $\Z_2$-coefficients has been characterized completely in \cite{KorRu16a}.
\begin{thm}[Korbas-Rusin]\label{cohg2n}
Let $n\geq 4$ and consider the Grassmannian $\wt{G}_{n,2}$ of oriented $2$-planes in $\R^n$. Let $\wt{w}_2$ denote $\pi^\ast w_2$, where $\pi:\wt{G}_{n,2}\to G_{n,2}$ is the covering map.\\
\textup{(i)} If $n$ is odd then there is an isomorphism of rings
\bgd
H^\ast(\wt{G}_{n,2};\Z_2)\cong \Z_2[\wt{w}_2]/(\wt{w}_2^{\frac{n-1}{2}})\otimes_{\Z_2} \Lambda_{\Z_2}(a_{n-1})
\edd
where $a_{n-1}\in H^{n-1}(\wt{G}_{n,2};\Z_2)$. \\
\textup{(ii)} If $n\equiv 0\,(\textup{mod}\,4)$ then 
\bgd
H^\ast(\wt{G}_{n,2};\Z_2)\cong \Z_2[\wt{w}_2]/(\wt{w}_2^{\frac{n}{2}})\otimes_{\Z_2} \Lambda_{\Z_2}(b_{n-2})
\edd
where $b_{n-2}\in H^{n-2}(\wt{G}_{n,2};\Z_2)$. \\
\textup{(iii)} If $n\equiv 2\,(\textup{mod}\,4)$ then 
\bgd
H^\ast(\wt{G}_{n,2};\Z_2)\cong \frac{\Z_2[\wt{w}_2]/(\wt{w}_2^{\frac{n}{2}})\otimes_{\Z_2} {\Z_2}[b_{n-2}]}{(b_{n-2}^2-\wt{w}_2^{\frac{n-2}{2}}b_{n-2})},
\edd
where $b_{n-2}\in H^{n-2}(\wt{G}_{n,2};\Z_2)$. 
\end{thm}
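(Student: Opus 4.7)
My approach combines the Gysin sequence of the orientation double cover $\pi\colon\wt{G}_{n,2}\to G_{n,2}$ with Borel's presentation $H^\ast(G_{n,2};\Z_2)\cong \Z_2[w_1,w_2]/(\bar{w}_{n-1},\bar{w}_n)$. First I would identify the subring generated by $\wt{w}_2=\pi^\ast w_2$. Since $\pi^\ast w_1=0$, the image of $\pi^\ast$ factors through $H^\ast(G_{n,2})/(w_1)=\Z_2[w_2]/(g_{n-1},g_n)$, and expanding $(1+w_1+w_2)^{-1}$ in $\Z_2[[w_1,w_2]]$ gives $g_i=w_2^{i/2}$ for $i$ even and $g_i=0$ for $i$ odd. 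Consequently $\pi^\ast(H^\ast(G_{n,2}))$ is $\Z_2[\wt{w}_2]/(\wt{w}_2^{(n-1)/2})$ when $n$ is odd and $\Z_2[\wt{w}_2]/(\wt{w}_2^{n/2})$ when $n$ is even, which produces the polynomial factor in all three parts of the theorem.

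Next, the Gysin sequence breaks into short exact sequences
\bgd
0\to \operatorname{coker}\bigl(\cup w_1\colon H^{j-1}(G_{n,2})\to H^j(G_{n,2})\bigr)\to H^j(\wt{G}_{n,2})\to \ker\bigl(\cup w_1\colon H^j(G_{n,2})\to H^{j+1}(G_{n,2})\bigr)\to 0.
\edd
Computing $\ker(\cup w_1)$ explicitly in $\Z_2[w_1,w_2]/(\bar{w}_{n-1},\bar{w}_n)$ (write $w_1x=A\bar{w}_{n-1}+B\bar{w}_n$ and extract the constraints by reducing modulo $w_1$, using the leading terms of $\bar{w}_{n-1}$ and $\bar{w}_n$ determined above) shows that exactly one new indecomposable class arises outside the $\wt{w}_2$-subring: in degree $n-1$ when $n$ is odd, and in degree $n-2$ when $n$ is even. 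I would label these $a_{n-1}$ and $b_{n-2}$ as in the theorem.

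Finally, I would pin down the multiplicative relations on the new generator. Since $\dim\wt{G}_{n,2}=2n-4$, the square $a_{n-1}^2$ lies in degree $2n-2>2n-4$ and vanishes for dimensional reasons, yielding the exterior structure of part (i). For the two even cases $b_{n-2}^2$ lies in top degree, and here I would invoke the identification $\wt{G}_{n,2}\cong Q_{n-2}$ with the smooth complex quadric of complex dimension $n-2$ together with Lai's description of its middle cohomology. The parity of $(n-2)/2$ controls the two rulings of $Q_{n-2}$ by maximal isotropic subspaces: when $n\equiv 0\pmod 4$ each ruling class squares to zero, forcing $b_{n-2}^2=0$ as in part (ii); when $n\equiv 2\pmod 4$ the rulings square to the point class $\wt{w}_2^{(n-2)/2}b_{n-2}$, yielding the non-trivial relation in part (iii). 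The main obstacle is precisely this last step: the squaring relation in the $n\equiv 2\pmod 4$ case is not formal and genuinely requires geometric input from the quadric model. An alternative route would be to determine $b_{n-2}^2$ via Wu's formula, by computing $\mathrm{Sq}^{n-2}b_{n-2}$ from the Steenrod action on $H^\ast(\wt{G}_{n,2})$, but this still requires one to first exhibit $b_{n-2}$ as a concrete geometric class.
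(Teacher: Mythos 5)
Your plan is essentially sound, and on the one part of the statement that this paper actually proves — the squaring relation in (ii) and (iii) — it uses exactly the same key input as the paper, namely the identification of $\wt{G}_{n,2}$ with the complex quadric $Q_{n-2}$ and Lai's theorem. The paper's version of that step is slightly more economical than yours: rather than re-deriving the intersection numbers of the two rulings, it applies the universal coefficient theorem to get $H^\ast(\wt{G}_{2n+2,2};\Z)\otimes\Z_2\cong H^\ast(\wt{G}_{2n+2,2};\Z_2)$ and then reduces Lai's listed relations $\kappa\cup\kappa=(1+(-1)^n)/2$ and $\kappa\cup\wt{\Omega}^n=(-1)^n$ mod $2$, which immediately yields $b_{n-2}^2=0$ or $b_{n-2}^2=\wt{w}_2^{(n-2)/2}b_{n-2}$ according to the residue of $n$ mod $4$; your ruling-parity discussion is a geometric restatement of the same facts and is consistent with them. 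For part (i) and for the additive/sub-top multiplicative structure, the paper simply cites Korbas--Rusin, whereas you sketch the Gysin-sequence derivation (which is in substance their argument); your computation of $g_i$ and of $\ker(\cup\,w_1)$ is correct, and $a_{n-1}^2=0$ for dimension reasons is fine. The one point your outline glosses over is why $\wt{w}_2^{j}a_{n-1}$ (resp. $\wt{w}_2^{j}b_{n-2}$) is nonzero for all $j$ up to $(n-3)/2$ (resp. $(n-4)/2$): the Gysin sequence only gives you the additive ranks and the existence of the new generator, so you should invoke Poincar\'{e} duality (the pairing of $H^{n-1}$ with $H^{n-3}$, the latter being spanned by a power of $\wt{w}_2$) to conclude that the tensor-product ring structure holds below the top degree. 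This is routine but is a genuine step, not a formality.
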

We note that (ii) and (iii) of the above theorem can be deduced from a much stronger result of Lai \cite{Lai74}. Let $\wt{\gamma}\to \wt{G}_{2n+2,2}$ be the oriented tautological bundle, i.e., the pullback by the covering map of the tautological $2$-plane bundle $\gamma_{2n+2,2}$ over $G_{2n+2,2}$.
\begin{thm}[Lai]\label{cohg2even}
The integral cohomology groups of $\wt{G}_{2n+2,2}$ are isomorphic to those of $\mathbb{CP}^n\times S^{2n}$. Moreover, the cohomology ring is generated by $\wt{\Omega}:=e(\wt{\gamma})\in H^2(\wt{G}_{2n+2,2};\Z)$ and $\kappa\in H^{2n}(\wt{G}_{2n+2,2};\Z)$ with the relations
\bgd
\wt{\Omega}^{n+1}=2\kappa\cup\wt{\Omega},\,\,\kappa\cup\wt{\Omega}^n=(-1)^n,\,\,\wt{\Omega}^{2n}=2(-1)^n,\,\,\kappa\cup\kappa=(1+(-1)^n)/2,\,\,\kappa\cup\Omega=1\in H^{4n}(\wt{G}_{2n+2,2}).
\edd
\end{thm}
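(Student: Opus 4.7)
The plan is to identify $\wt{G}_{2n+2,2}$ with the smooth complex quadric $Q_{2n}\subset \bcp^{2n+1}$ cut out by $\sum_{i=0}^{2n+1} z_i^2=0$, and then read the integral cohomology off the well-known Schubert decomposition of an even-dimensional smooth quadric. This is natural because both spaces carry a transitive $SO(2n+2)$-action, and the quadric structure encodes the orientation through the choice of imaginary unit.

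First, I would construct the identification
\[
\phi:\wt{G}_{2n+2,2}\longrightarrow \bcp^{2n+1},\qquad \phi(\wt{P})=[v_1+iv_2],
\]
where $(v_1,v_2)$ is any positively oriented orthonormal basis of $\wt{P}$. Rotating $(v_1,v_2)$ by an angle $\theta$ multiplies $v_1+iv_2$ by $e^{i\theta}$, so the class is well defined on oriented planes; since $(v_1+iv_2)\cdot(v_1+iv_2)=|v_1|^2-|v_2|^2+2i\,v_1\!\cdot\!v_2=0$, the image lies in $Q_{2n}$. Both source and target are closed connected real $4n$-manifolds, $\phi$ is $SO(2n+2)$-equivariant, and injectivity is immediate, so $\phi$ is a diffeomorphism.

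Second, I would invoke the Schubert decomposition of $Q_{2n}$ with respect to a complete isotropic flag: there is one Schubert cell in each complex dimension $k\in\{0,1,\ldots,2n\}\setminus\{n\}$, and two cells in the middle dimension $n$ coming from the two connected families of maximal isotropic subspaces. This gives the additive structure and matches $H^\ast(\bcp^n\times S^{2n};\Z)$. I would then identify $\wt{\Omega}$ with the restriction of the hyperplane class of $\bcp^{2n+1}$ (which coincides with $e(\wt{\gamma})$ under $\phi$), and let $\kappa$ be one of the two middle-dimensional Schubert classes, with the other being $\wt{\Omega}^{n}-\kappa$.

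Third, the specific ring relations follow from intersection theory on $Q_{2n}$. A generic hyperplane section of $Q_{2n}$ is a copy of $Q_{2n-1}$, and in the middle dimension the hyperplane class splits as $\wt{\Omega}^{n}=\kappa+(\wt{\Omega}^{n}-\kappa)$; multiplying by $\wt{\Omega}$ yields $\wt{\Omega}^{n+1}=2\kappa\cup\wt{\Omega}$. The pairing $\kappa\cup\wt{\Omega}^{n}=(-1)^n$ is the (signed) intersection of a maximal isotropic linear subspace with a codimension-$n$ Schubert cycle, and $\kappa\cup\kappa=(1+(-1)^n)/2$ records whether two generic maximal isotropic subspaces from the \emph{same} family meet in a point (even $n$) or miss one another (odd $n$). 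The remaining relations are formal consequences of Poincar\'e duality on $Q_{2n}$.

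The main obstacle is this last step: the additive computation and the diffeomorphism with a quadric are essentially classical, but pinning down the \emph{integral} coefficient $2$ in $\wt{\Omega}^{n+1}=2\kappa\cup\wt{\Omega}$ and the signs $(-1)^n$ requires careful Schubert calculus on both families of maximal isotropic subspaces, together with orientation bookkeeping to distinguish true signs from $\mathbb{Z}_2$-answers. Working modulo $2$, as the rest of the present paper does, would collapse the factor $2$ and erase the parity dependence; the integral content here is exactly what must be handled most carefully, and this is where the bulk of the work lies.
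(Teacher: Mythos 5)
The paper offers no proof of this statement: it is quoted from Lai's article [Lai74] on even-dimensional complex quadrics and is used downstream only through its mod $2$ reduction. Your strategy --- the diffeomorphism $\wt{P}\mapsto[v_1+iv_2]$ onto the quadric $Q_{2n}\subset\mathbb{CP}^{2n+1}$, followed by intersection theory on the two families of maximal linear subspaces --- is precisely the classical argument underlying Lai's computation, so you are reconstructing the cited source rather than finding a different route; the construction of $\phi$ and the additive computation are sound. Two things keep the proposal from being a proof. First, you have correctly located but not resolved the essential difficulty: taking the complex orientation of $Q_{2n}$ and $\kappa=[A]$ one of the two families, the standard parity facts you quote give $\kappa\cup h^n=+1$ and $h^{2n}=+2$ for \emph{every} $n$, so the factors $(-1)^n$ in the statement are invisible until one pins down both the sign in $e(\wt{\gamma})=\pm h$ (here $\wt{\gamma}$ is the realification of $\mathcal{O}(-1)|_{Q_{2n}}$, whose complex orientation must be compared with the orientation $(v_1,v_2)$) and the fundamental class of $\wt{G}_{2n+2,2}$ relative to the complex orientation of $Q_{2n}$; these relations are therefore not \emph{formal consequences of Poincar\'e duality} but exactly the orientation analysis you defer. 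Second, the relation $\kappa\cup\Omega=1$ involves $\Omega=e(\wt{\gamma}^\perp)$ (which the paper only defines in the following subsection), and your proposal never addresses it; one must additionally derive $\Omega=2\kappa-\wt{\Omega}^n$, e.g.\ from $\wt{\Omega}\cup\Omega=e(\wt{\gamma}\oplus\wt{\gamma}^\perp)=0$ together with one normalization, before that relation can be checked. Neither gap matters for the present paper, which only invokes the mod $2$ consequences, where the coefficient $2$ and all signs disappear.
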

{\it Proof of Theorem \ref{cohg2n} (ii),(iii) using Theorem \ref{cohg2even}.}\hf It follows from universal coefficient theorem in cohomology and homology that there is an isomorphism of rings
\bgd
H^\ast(\wt{G}_{2n+2,2};\Z)\otimes\Z_2\xrightarrow{\cong} H^\ast(\wt{G}_{2n+2,2};\Z_2).
\edd
Note that the isomorphism above identifies $\wt{\Omega}\otimes 1$ with $\wt{w}_2$, the second Stiefel-Whitney class of $\wt{\gamma}$. Let $b$ denote $\kappa\otimes 1\in H^{2n}(\wt{G}_{2n+2,2};\Z_2)$. Therefore, the relations among generators in $H^\ast(\wt{G}_{2n+2,2};\Z_2)$ become
\bgd
\wt{w}_2^{n+1}=0,\,\,b\cup\wt{w}_2^n=1,\,\,\wt{w}_2^{2n}=0,\,\,b\cup b=\left\{\begin{array}{rl}
0 & \textup{if $\,2n+2$ $\equiv$ $0$ mod $4$}\\
1 & \textup{if $\,2n+2$ $\equiv$ $2$ mod $4$}
\end{array}\right.
\edd
The third relation is redundant as it follows from the first while the second and fourth combine to give the relation $b^2=0$ or $b^2+b\wt{w}_2^n=0$ depending on whether $2n+2$ is $0$ or $2$ module $4$.$\hfill\square$\\[0.2cm] 
\hf\hf It follows that when $n$ is odd, $H^*(\wt{G}_{n,2})$ has only $2$ indecomposables $\wt{w}_2,a_{n-1}$ in degrees $2$ and $n-1$ respectively while if $n$ is even, $H^*(\wt{G}_{n,2})$ has only $2$ indecomposables $\wt{w}_2,b_{n-2}$ in degree $2$ and $n-2$ respectively. We wish to analyze the map induced, in cohomology with $\Z_2$-coefficients, by the natural inclusion $\wt{i}:\wt{G}_{n,2}\hookrightarrow \wt{G}_{n+1,2}$. 
\begin{prpn}\label{mapg2ng2n+1}
The induced map $\wt{i}^\ast:H^\ast(\wt{G}_{n+1,2};\Z_2)\to H^\ast(\wt{G}_{n,2};\Z_2)$ on the cohomology rings is described by the following action on the generators:
\begin{eqnarray*}
\wt{w}_2\mapsto \wt{w}_2,\,b_{n-1}\mapsto a_{n-1} & & \textup{if $n$ is odd}\\
\wt{w}_2\mapsto \wt{w}_2,\,a_{n}\mapsto \wt{w}_2 b_{n-2} & & \textup{if $n$ is even}.
\end{eqnarray*}
\end{prpn}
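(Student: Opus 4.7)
The first claim $\wt{i}^\ast(\wt{w}_2)=\wt{w}_2$ is immediate from naturality of Stiefel--Whitney classes together with $\wt{\gamma}_{n+1,2}|_{\wt{G}_{n,2}}=\wt{\gamma}_{n,2}$. For the second claim the decisive observation is that, by Theorem~\ref{cohg2n}, the relevant target group is already one-dimensional: in the odd case the relation $\wt{w}_2^{(n-1)/2}=0$ forces $H^{n-1}(\wt{G}_{n,2};\Z_2)=\Z_2\{a_{n-1}\}$, while in the even case the relation $\wt{w}_2^{n/2}=0$ forces $H^{n}(\wt{G}_{n,2};\Z_2)=\Z_2\{\wt{w}_2\,b_{n-2}\}$. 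Thus the proposition reduces to showing that $\wt{i}^\ast$ is nonzero on $b_{n-1}$ (respectively $a_n$).

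To detect this nonvanishing, I plan to compute the Poincar\'e dual of $\wt{G}_{n,2}$ in $\wt{G}_{n+1,2}$ from the transverse section viewpoint. The assignment $\wt{P}\mapsto\langle\,\cdot\,,e_{n+1}\rangle|_{\wt{P}}$ defines a smooth section of $\wt{\gamma}_{n+1,2}^\ast$ that is transverse to the zero section and vanishes precisely on $\wt{G}_{n,2}$. Hence the normal bundle of the codimension-$2$ embedding $\wt{i}$ is $\wt{\gamma}_{n+1,2}^\ast|_{\wt{G}_{n,2}}\cong\wt{\gamma}_{n,2}$, whose mod-$2$ Euler class is $\wt{w}_2$. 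Consequently $\textup{PD}[\wt{G}_{n,2}]=\wt{w}_2\in H^2(\wt{G}_{n+1,2};\Z_2)$, and the projection formula produces the evaluation identity
\bgd
\langle\wt{i}^\ast(x),[\wt{G}_{n,2}]\rangle=\langle x\cup\wt{w}_2,[\wt{G}_{n+1,2}]\rangle
\edd
for every $x\in H^{2n-4}(\wt{G}_{n+1,2};\Z_2)$.

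I then apply this identity with $x=b_{n-1}\wt{w}_2^{(n-3)/2}$ in the odd case and $x=a_n\wt{w}_2^{(n-4)/2}$ in the even case. In each case $\deg x=2n-4=\dim\wt{G}_{n,2}$, and a direct inspection of the ring descriptions in Theorem~\ref{cohg2n} shows that the product $x\cup\wt{w}_2$---namely $b_{n-1}\wt{w}_2^{(n-1)/2}$ or $a_n\wt{w}_2^{(n-2)/2}$ respectively---is the unique top-degree monomial surviving the relations, hence a generator of $H^{2n-2}(\wt{G}_{n+1,2};\Z_2)\cong\Z_2$. The identity above then forces $\wt{i}^\ast(x)$ to be the generator of $H^{2n-4}(\wt{G}_{n,2};\Z_2)$, which by the same ring descriptions equals $a_{n-1}\wt{w}_2^{(n-3)/2}$ for $n$ odd and $b_{n-2}\wt{w}_2^{(n-2)/2}$ for $n$ even. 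Cancelling the common factor $\wt{w}_2^{(n-3)/2}$ (resp.\ $\wt{w}_2^{(n-4)/2}$)---an injective operation on the one-dimensional groups identified in the first paragraph---yields $\wt{i}^\ast(b_{n-1})=a_{n-1}$ and $\wt{i}^\ast(a_n)=\wt{w}_2\,b_{n-2}$, as required. The main obstacle is the Poincar\'e-duality computation, but the transverse section above makes it routine; the remainder of the argument is bookkeeping against the relations of Theorem~\ref{cohg2n}.
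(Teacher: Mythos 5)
Your argument is correct, but it takes a genuinely different route from the paper. The paper works entirely on the level of the double covers: it maps the Gysin sequence of $\wt{G}_{n+1,2}\to G_{n+1,2}$ to that of $\wt{G}_{n,2}\to G_{n,2}$, identifies $\varphi_{n+1}(b_{n-1})$ (resp.\ $\varphi_{n+1}(a_n)$) with the explicit polynomial $p$ in $w_1,w_2$ satisfying $w_1p=\bar{w}_n$ (resp.\ $\bar{w}_{n+1}$), and then verifies by hand --- including a binomial-coefficient computation for $q_{n-2}$ in the even case --- that $i^\ast(p)\neq 0$ in $H^\ast(G_{n,2})$. You instead detect nonvanishing geometrically: the section $\wt{P}\mapsto\langle\cdot,e_{n+1}\rangle|_{\wt{P}}$ of $\wt{\gamma}_{n+1,2}^\ast$ is indeed transverse with zero locus $\wt{G}_{n,2}$, so $\textup{PD}[\wt{G}_{n,2}]=w_2(\wt{\gamma}_{n+1,2}^\ast)=\wt{w}_2$ and the projection formula gives $\langle\wt{i}^\ast x,[\wt{G}_{n,2}]\rangle=\langle x\cup\wt{w}_2,[\wt{G}_{n+1,2}]\rangle$; pairing against the top classes $\wt{w}_2^{(n-1)/2}b_{n-1}$ and $\wt{w}_2^{(n-2)/2}a_n$ of Theorem \ref{cohg2n} then forces the claimed images, since the target groups $H^{n-1}(\wt{G}_{n,2})$ and $H^n(\wt{G}_{n,2})$ are one-dimensional (and the answer is insensitive to the ambiguity $b_{n-1}\mapsto b_{n-1}+\wt{w}_2^{(n-1)/2}$, as $\wt{i}^\ast$ kills $\wt{w}_2^{(n-1)/2}$). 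Your approach buys uniformity in the two parity cases and avoids all manipulation of the dual classes $\bar{w}_i$, at the cost of importing transversality, mod $2$ Poincar\'{e} duality, and the full ring description of Theorem \ref{cohg2n} including its top-degree generator; the paper's approach stays inside the Gysin-sequence formalism it uses everywhere else and produces the explicit polynomial representatives downstairs, which is more in the spirit of its later computations. One small presentational point: the Euler class relevant to the Poincar\'{e}-dual computation is that of $\wt{\gamma}_{n+1,2}^\ast$ over the ambient $\wt{G}_{n+1,2}$ (not of its restriction, the normal bundle), though of course both equal $\wt{w}_2$ in their respective rings, so nothing is affected.
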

\begin{proof}
Consider the ring homomorphism $\wt{i}^*:H^*(\wt{G}_{n+1,2})\to H^*(\wt{G}_{n,2})$. There is map of covering spaces
$$\xymatrix@C=.8cm{
\wt{G}_{n,2} \ar[r]^-{\wt{i}} \ar[d]_-{\pi} & \wt{G}_{n+1,2}\ar[d]^{\pi}\\
G_{n,2} \ar[r]^-{i} & G_{n+1,2}
}
$$
The tautological bundle over $G_{n+1,2}$ pulls back under $i$ to the tautological bundle over $G_{n,2}$. Similar considerations hold for all the other three maps in the diagram above. This implies that $\wt{i}^\ast(\wt{w}_2)=\wt{w}_2$. The above diagram also induces a map between the Gysin sequences associated to the double covers:
%Let $H^*(\wt{G}_{n+1,2})=\langle w_2, c_{n-1}\rangle$ and $H^*(\wt{G}_{n,2})=\langle w_2, b_{n-1}\rangle.$
%$$\xymatrix@C=.8cm{
%H^1(G_{n+1,2})\ar[d]^{i^*} \ar[r]^{\cup w_1} & H^2(G_{n+1,2}) \ar[d]^{i^*} \ar[r]^{p^*} & H^2(\wt{G}_{n+1,2})\ar[d]^{i^*}\\
%H^1(G_{n,2}) \ar[r]^{\cup w_1} & H^2(G_{n,2}) \ar[r]^{p^*} & H^2(\wt{G}_{n,2})
%}
%$$
%As $w_2\in H^2(\wt{G}_{n+1,2})$ and $w_2\in H^2(\wt{G}_{n,2})$ are in the images of $p^*$, so from the above diagram it is clear that $i^*(w_2)=w_2.$\\
%Now consider this diagram.
$$
\xymatrix@C=.65cm{
\cdots\ar[r] & H^{j-2}(G_{n+1,2}) \ar[d]^-{i^*} \ar[r]^-{\cup w_1} & H^{j-1}(G_{n+1,2}) \ar[d]^-{i^*} \ar[r]^-{\pi^*} & H^{j-1}(\wt{G}_{n+1,2}) \ar[d]^-{\wt{i}^*} \ar[r]^-{\varphi_{n+1}} & H^{j-1}(G_{n+1,2}) \ar[d]^-{i^*} \ar[r]^-{\cup w_1} & H^j(G_{n+1,2})\ar[d]^-{i^*}\ar[r] & \cdots\\
\cdots\ar[r] & H^{j-2}(G_{n,2}) \ar[r]^-{\cup w_1} & H^{j-1}(G_{n,2}) \ar[r]^-{\pi^*} & H^{j-1}(\wt{G}_{n,2}) \ar[r]^-{\varphi_n} & H^{j-1}(G_{n,2}) \ar[r]^-{\cup w_1} & H^j(G_{n,2})\ar[r] & \cdots} 
$$
We now analyze the two cases separately, depending on the parity of $n$. \\
\hf Case 1: Let $n$ be odd, $j=n$ and note that 
\bgd
\pi^\ast(H^{n-1}(G_{n+1,2}))=\Z_2 \wt{w}_2^{\frac{n-1}{2}},\,\,\pi^\ast(H^{n-1}(G_{n,2}))=0
\edd
The diagram of long exact sequences now look like
$$
\xymatrix@C=.65cm{
\Z_2 \wt{w}_2^{\frac{n-1}{2}} \ar[d]\ar[r]^-{i} & H^{n-1}(\wt{G}_{n+1,2}) \ar[d]^-{\wt{i}^*} \ar[r]^-{\varphi_{n+1}} & H^{n-1}(G_{n+1,2}) \ar[d]^-{i^*} \ar[r]^-{\cup w_1} & H^n(G_{n+1,2})\ar[d]^-{i^*}\ar[r] & \cdots\\
0 \ar[r]^-{\pi^*} & H^{n-1}(\wt{G}_{n,2}) \ar[r]^-{\varphi_n} & H^{n-1}(G_{n,2}) \ar[r]^-{\cup w_1} & H^n(G_{n,2})\ar[r] & \cdots} 
$$
The non-zero element $\varphi_{n+1}(b_{n-1})$ is mapped to zero in $H^n(G_{n+1,2})$ when multiplied by $w_1$. We know that
\bgd
H^\ast(G_{n+1,2};\Z_2)\cong\Z_2[w_1,w_2]/(\bar{w}_{n},\bar{w}_{n+1}).
\edd
Thus, $H^j(G_{n+1},2)$ is freely generated by $w_1,w_2$ if $j\leq n-1$. In $H^n(G_{n+1},2)$ we have $\bar{w}_n=0$. As $n$ is odd, $\bar{w}_n=w_1\cup p_{n-1}$ and $\varphi_{n+1}(b_{n-1})=p_{n-1}$. To prove that $\wt{i}^\ast(b_{n-1})=a_{n-1}$, it suffices to show that $i^\ast(p_{n-1})\neq 0$. Note that
\bgd
\bar{w}_n=w_1\bar{w}_{n-1}+w_2\bar{w}_{n-2}=w_1(\bar{w}_{n-1}+w_2q_{n-3})
\edd
as $\bar{w}_{n-2}$ is non-zero and can be written as $w_1 q_{n-3}$. Thus, $p_{n-1}=\bar{w}_{n-1}+w_2q_{n-3}$ and 
\bgd
i^\ast(p_{n-1})=i^\ast(\bar{w}_{n-1})+i^\ast(w_2 q_{n-3})=w_2 q_{n-3}
\edd
where the last equality follows from the fact that $\bar{w}_{n-1}=0\in H^{n-1}(G_{n,2})$ and $i^\ast(\bar{w}_{n-2})=\bar{w}_{n-2}$. Since $w_2q_{n-3}$ has no term of the form $w_1^{n-1}$, it cannot equal $\bar{w}_{n-1}$, whence $w_2 q_{n-3}\neq 0$ in $H^{n-1}(G_{n,2})$. This implies that $i^\ast(p_{n-1})\neq 0$.\\
\hf Case 2: Let $n$ be even and we shall show that $\wt{i}^*(a_n)=\wt{w}_2 b_{n-2}$. Since $H^n(G_{n+1,2})=\Z_2 a_n$ and $H^n(G_{n,2})=\Z_2 \wt{w}_2b_{n-2}$, it suffices to show that $\wt{i}^\ast$ is injective in the following diagram
$$
\xymatrix@C=.65cm{
\cdots\ar[r] & H^n(G_{n+1,2}) \ar[d]^-{i^*} \ar[r]^-{\pi^*} & H^{n}(\wt{G}_{n+1,2}) \ar[d]^-{\wt{i}^*} \ar[r]^-{\varphi_{n+1}} & H^{n}(G_{n+1,2}) \ar[d]^-{i^*} \ar[r]^-{\cup w_1} & H^{n+1}(G_{n+1,2})\ar[d]^-{i^*}\ar[r] & \cdots\\
\cdots\ar[r] &  H^n(G_{n,2}) \ar[r]^-{\pi^*} & H^n(\wt{G}_{n,2}) \ar[r]^-{\varphi_n} & H^n(G_{n,2}) \ar[r]^-{\cup w_1} & H^{n+1}(G_{n,2})\ar[r] & \cdots
} 
$$ 
As both the maps $\pi^\ast$ are zero, the map $\varphi_n$ and $\varphi_{n+1}$ are injective. Let $\varphi_{n+1}(a_n)=p_n$ where $w_1 p_n=\bar{w}_{n+1}$.  Now, 
\bgd
\bar{w}_{n+1}=w_1 \bar{w}_n+w_2 \bar{w}_{n-1}=w_1 \bar{w}_n + w_2 w_1 q_{n-2}=w_1 (w_2 q_{n-2})\in H^{n+1}(G_{n+1,2})
\edd
for some non-zero $q_{n-2}\in H^{n-2}(G_{n,2})$. Therefore, $p_n=w_2 q_{n-2}$ and $i^\ast(p_n)=w_2 q_{n-2}$. It follows from the definition of $\bar{w}_{n-1}$ that
\begin{eqnarray}
q_{n-2} & = & \sum_{r=0}^{\frac{n-2}{2}} {n-1-r \choose r} w_2^r w_1^{n-2-2r}\nonumber \\
& = & \sum_{r=0}^{\frac{n-2}{2}} {n-2-r \choose r} w_2^r w_1^{n-2-2r}+\sum_{r=0}^{\frac{n-2}{2}} {n-2-r \choose r-1} w_2^r w_1^{n-2-2r}\nonumber \\
& = & \bar{w}_{n-2} + \sum_{r=1}^{\frac{n-2}{2}} {n-2-r \choose r-1} w_2^r w_1^{n-2-2r}\nonumber \\
\label{qn-2}& = & \bar{w}_{n-2} + \left(w_2 w_1^{n-4}+\cdots+ \frac{n-2}{2} w_2^{\frac{n-2}{2}}\right).
\end{eqnarray}
It can be verified that if $w_2q_{n-2}=0\in H^n(G_{n,2})$ then $w_2q_{n-2}=\lambda w_1 \bar{w}_{n-1}+\mu \bar{w}_n$, where $\lambda,\mu\in \Z_2$. Note that $w_1^n$ occurs as a term in $\bar{w}_n$ as well as $w_1\bar{w}_{n-1}$ while no such term occurs in $w_2 q_{n-2}$. Therefore, $\lambda=\mu=1$ which implies $q_{n-2}=\bar{w}_{n-2}$. This contradicts the equality \eqref{qn-2}. Thus, $i^\ast(p_n)\neq 0$ and $\wt{i}^*(a_n)=\wt{w}_2 b_{n-2}$.
\end{proof}

\subsection{The space $W_{2,1}^n$ and its cohomology ring}

\hf\hf Let us consider the sphere bundle 
\bgd
S^{n-3}\hookrightarrow W_{2,1}^n\xrightarrow{p}\wt{G}_{n,2},\,\,(\wt{P},v)\mapsto\wt{P}
\edd
Note that if $\wt{\gamma}\to\wt{G}_{n,2}$ denotes the oriented tautological bundle then $W^n_{2,1}$ is the sphere bundle of $\wt{\gamma}^\perp$. Thus, the Euler class of $W_{2,1}^n$ is the Euler class $\Omega:=e(\wt{\gamma}^\perp)\in H^{n-2}(\wt{G}_{n,2};\Z)$. Using \cite{Lai74} we can completely describe the cohomology ring of $W^n_{2,1}$ with $\Z_2$-coefficients.
\begin{thm}\label{HW21}
\textup{(a)} The cohomology ring of $W^{2n+2}_{2,1}$ with $\Z_2$-coefficients is isomorphic to that of $\mathbb{CP}^{n-1}\times S^{2n}\times S^{2n+1}$. There are generators $c_{2n}:=p^\ast b_{2n}, \w_2:=p^\ast \wt{w}_2$ and $c_{2n+1}$ in degrees $2n, 2$ and $2n+1$ respectively, where $\wt{w}_2$ is the reduction mod $2$ of the class $\wt{\Omega}=e(\wt{\gamma})\in H^2(\wt{G}_{2n+2,2};\Z)$, such that
\bgd
H^\ast(W^{2n+2}_{2,1};\Z_2)\cong \Z_2[\w_2]/(\w_2^n)\otimes \Lambda_{\Z_2}(c_{2n})\otimes \Lambda_{\Z_2}(c_{2n+1}).
\edd
\textup{(b)} The cohomology ring of $W^{2n+1}_{2,1}$ with $\Z_2$-coefficients is isomorphic to that of $\mathbb{CP}^{n-1}\times S^{2n-2}\times S^{2n}$. There are generators $d_{2n}:=p^\ast a_{2n}, \w_2:=p^\ast \wt{w}_2$ and $d_{2n-2}$ in degrees $2n, 2$ and $2n-2$ respectively, where $\wt{w}_2$ is the reduction mod $2$ of the class $\wt{\Omega}=e(\wt{\gamma})\in H^2(\wt{G}_{2n+1,2};\Z)$, such that
\bgd
H^\ast(W^{2n+1}_{2,1};\Z_2)\cong \frac{\big(\Z_2[\w_2]/(\w_2^n)\otimes \Lambda_{\Z_2}(d_{2n})\big)[d_{2n-2}]}{\langle d_{2n-2}^2-\lambda \w_2^{n-2}d_{2n}-\mu \w_2^{n-1}d_{2n-2}\rangle}
\edd
for some $\lambda,\mu\in\Z_2$.
\end{thm}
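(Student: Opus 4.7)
The plan is to compute $H^\ast(W_{2,1}^N;\Z_2)$ by running the Serre spectral sequence of the sphere bundle
\[
S^{N-3}\hookrightarrow W_{2,1}^N\xrightarrow{p}\wt{G}_{N,2}
\]
for both $N=2n+1$ and $N=2n+2$. Since $W_{2,1}^N$ is the unit sphere bundle of the rank-$(N-2)$ bundle $\wt{\gamma}^\perp$, its mod-$2$ Euler class equals $w_{N-2}(\wt{\gamma}^\perp)$. The Whitney identity $w(\wt{\gamma})\cdot w(\wt{\gamma}^\perp)=1$ with $w(\wt{\gamma})=1+\wt{w}_2$ yields $w_j(\wt{\gamma}^\perp)=\wt{w}_2^{j/2}$ for even $j$ and $0$ for odd $j$; so the Euler class is $\wt{w}_2^n$ when $N=2n+2$ and vanishes when $N=2n+1$. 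In either case the only potentially nonzero differential is $d_{N-2}$, which by Leibniz acts on the row $q=N-3$ as cup product with this class.

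For part (a), with $N=2n+2$, the description in Theorem \ref{cohg2n} shows that $\cup\wt{w}_2^n$ on $H^\ast(\wt{G}_{2n+2,2})$ has kernel equal to the ideal $(\wt{w}_2)$ and image equal to $(\wt{w}_2^n)=\Z_2\{\wt{w}_2^n,\wt{w}_2^n b_{2n}\}$. Hence
\[
E_\infty^{\ast,0}\cong \Z_2[\wt{w}_2]/(\wt{w}_2^n)\otimes\Lambda(b_{2n}),\qquad E_\infty^{\ast,2n-1}\cong (\wt{w}_2)\cdot\sigma,
\]
the point being that the possible relation $b_{2n}^2=\wt{w}_2^n b_{2n}$ collapses to $b_{2n}^2=0$ modulo $\wt{w}_2^n$, so both parities of $n$ give the same quotient. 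Set $\w_2:=p^\ast(\wt{w}_2)$, $c_{2n}:=p^\ast(b_{2n})$, and let $c_{2n+1}$ be any lift of $\wt{w}_2\sigma\in E_\infty^{2,2n-1}$ to $H^{2n+1}(W_{2,1}^{2n+2})$. Then $\w_2^n=0$ because $\wt{w}_2^n$ lies in the image of $\cup\wt{w}_2^n=\ker p^\ast$; $c_{2n}^2=p^\ast(b_{2n}^2)=0$; and $c_{2n+1}^2\in H^{4n+2}(W_{2,1}^{2n+2})$ vanishes because a scan of bidegrees with $p+q=4n+2$ and $q\in\{0,2n-1\}$ shows $E_\infty^{p,q}=0$. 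A basis count then confirms that the algebra map $\Z_2[\w_2]/(\w_2^n)\otimes\Lambda(c_{2n})\otimes\Lambda(c_{2n+1})\to H^\ast(W_{2,1}^{2n+2})$ is an isomorphism on associated graded rings, hence a ring isomorphism.

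For part (b), with $N=2n+1$, the mod-$2$ Euler class vanishes, so the spectral sequence collapses at $E_2$ and $H^\ast(W_{2,1}^{2n+1})\cong H^\ast(\wt{G}_{2n+1,2})\otimes H^\ast(S^{2n-2})$ as graded vector spaces. Put $\w_2:=p^\ast(\wt{w}_2)$, $d_{2n}:=p^\ast(a_{2n})$, and let $d_{2n-2}$ be any lift of the fibre generator $\sigma\in E_\infty^{0,2n-2}$. The relations $\w_2^n=0$ and $d_{2n}^2=0$ pull back directly from $\wt{G}_{2n+1,2}$. The last datum is the product $d_{2n-2}^2\in H^{4n-4}(W_{2,1}^{2n+1})$, which is a multiplicative extension invisible to $E_\infty$. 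Enumerating bidegrees with total degree $4n-4$ and $q\in\{0,2n-2\}$ shows this group has dimension two with basis $\{\w_2^{n-2}d_{2n},\,\w_2^{n-1}d_{2n-2}\}$, forcing $d_{2n-2}^2=\lambda\w_2^{n-2}d_{2n}+\mu\w_2^{n-1}d_{2n-2}$ for some $\lambda,\mu\in\Z_2$.

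The main obstacle is precisely this last multiplicative extension: the scalars $\lambda,\mu$ are not determined by $E_\infty$ and would require finer information (e.g.\ Steenrod operations or naturality against the inclusion $\wt{G}_{n,2}\hookrightarrow\wt{G}_{n+1,2}$ of Proposition \ref{mapg2ng2n+1}) to pin down. Since Theorem \ref{HW21} leaves them as unknown scalars, the spectral sequence analysis above is sufficient to conclude the proof.
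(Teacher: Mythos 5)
Your proposal is correct and follows essentially the same route as the paper: both run the Serre (Gysin) spectral sequence of the sphere bundle $S^{N-3}\hookrightarrow W^{N}_{2,1}\to\wt{G}_{N,2}$ over the known ring $H^\ast(\wt{G}_{N,2};\Z_2)$, with the Euler class $\wt{w}_2^{n}$ (resp.\ $0$) driving the single differential, the two-row $E_\infty$ page giving the additive and multiplicative structure, and the scalars $\lambda,\mu$ left undetermined as a multiplicative extension exactly as in the statement. The only difference is cosmetic: you identify the mod $2$ Euler class of $\wt{\gamma}^\perp$ directly from Whitney duality $w(\wt{\gamma}^\perp)=(1+\wt{w}_2)^{-1}$, whereas the paper reads it off from Lai's integral relation $\Omega=2\kappa-\wt{\Omega}^n$ reduced mod $2$.
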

\begin{proof} 
We prove (a) first. It follows from the Serre spectral sequence in (integral cohomology) for $S^{2n-1}\hookrightarrow W_{2,1}^{2n+2}\xrightarrow{p}\wt{G}_{2n+2,2},\,\,(\wt{P},v)\mapsto\wt{P}$ that the only non-zero differential is $d^{2n}$ for the $E_{2n}$-page. Moreover, $d^{2n}$ is multiplication by $\Omega$. Using $\Omega=2\kappa-\wt{\Omega}^n$, it can be verified that the integral cohomology groups of $W^{2n+2}_{2,1}$ are the same as that of $\mathbb{CP}^{n-1}\times S^{2n}\times S^{2n+1}$. In $E_{2n+1}$-page we see that $E_{2n+1}^{0,2n-1}=0$ and $E_{2n+1}^{2,2n-1}=\Z$. Let this integer be generated by an element $c\in H^{2n+1}(W^{2n+2}_{2,1};\Z)$. Similarly, $E_{2n+1}^{4n,0}=0$, i.e., the top cohomology class of $\wt{G}_{2n+2,2}$ is killed by $d^{2n}(\kappa\otimes 1)=\kappa\cup\Omega$. \\
\hf A similar consideration with $\Z_2$-coefficients imply that $H^\ast(W^{2n+2}_{2,1};\Z_2)$ is isomorphic as groups to that of $H^\ast(\mathbb{CP}^{n-1}\times S^{2n}\times S^{2n+1};\Z_2)$. Here the relation $\Omega=2\kappa-\wt{\Omega}^n$ becomes $\Omega=\wt{\Omega}^n$ as we are working with $\Z_2$-coefficients. If $1\otimes 1\in E^{0,2n-1}_{2n}$ then
\bgd
d^{2n}(1\otimes 1)=\Omega=\wt{\Omega}^n=\w_2^n
\edd
and we conclude that $E_\infty^{2n,0}=E_{2n+1}^{2n,0}=\Z_2 p^\ast\kappa$.  Since $E_{2n+1}^{4n,0}=E_\infty^{4n,0}=0$ we conclude that $p^\ast\kappa\cup p^\ast\kappa=0$. Let $c_{2n+1}$ denote the generator of $E_\infty^{2,2n-1}=E_{2n+1}^{2,2n-1}=\Z_2$. It can be shown that the only indecomposables occur in degrees $2, 2n, 2n+1$. Since $H^{4n+2}(W^{2n+2}_{2,1};\Z_2)=0$ it follows that $c_{2n+1}^2=0$. These relations along with Poincar\'{e} duality for $W^{2n+2}_{2,1}$ imply that the cohomology ring is generated by $\w_2,c_{2n},c_{2n+1}$ with the relations
\bgd
\w_2^{n}=0,\,\,c_{2n}^2=0,\,\,c_{2n+1}^2=0.
\edd
This completes the proof of (a).\\
\hf Consider the Serre spectral sequence for the sphere bundle $S^{2n-2}\hookrightarrow W_{2,1}^{2n+1}\xrightarrow{p}\wt{G}_{2n+1,2}$. The only possible non-zero differential is $d^{2n-1}$ which is given by multiplying by (the mod $2$ reduction of) $e(\wt{\gamma}^\perp)\in H^{2n-1}(\wt{G}_{2n+1,2};\Z_2)=0$. Therefore, 
\bgd
E_\infty^{p,q}=H^p(\wt{G}_{2n+1,2})\otimes H^{q}(S^{2n-2})
\edd
and $W^{2n+1}_{2,1}$ has the same cohomology groups as that of $\mathbb{CP}^{n-1}\times S^{2n-2}\times S^{2n}$. According to Theorem \ref{decomposable}, $H^*(W_{2,1}^{2n+1})$ is decomposable in degrees other than $2,2n-2, 2n$. From the ring structure of $E_\infty$ we observe that there are indecomposables only in degree $2, 2n-2, 2n$, denoted by $\w_2,d_{2n-2}$ and $d_{2n}$ where $\w_2=p^\ast\wt{w}_2$ and $d_{2n}=p^\ast(a_{2n})$ are in the image of $p^*(H^*(\wt{G}_{2n+1,2}))$ and are the only indecomposables in these degrees. However, $d_{2n-2}$ is not uniquely determined as an indecomposable. In $H^*(\wt{G}_{2n+1,2})$, $a_{2n}^2=0$ and this implies that $d_{2n}^2=0$. Similarly, it follows from Theorem \ref{cohg2n} (i) that $\w_2^{n-1}\neq 0$ but $\w_2^n=0$. It follows from Poincar\'{e} duality that $\w_2^{n-1} d_{2n} d_{2n-2}$ is the top cohomology class. Note that $\w_2^j d_{2n}\neq \w_2^{j+1} d_{2n-2}$ if $j\leq n-2$; if these were equal then
\bgd
0\neq \w_2^{n-1} d_{2n-2} d_{2n}=\w_2^{n-2-j}(\w_2^{j+1} d_{2n-2})d_{2n}=\w_2^{n-2-j}\w_2^{j} d_{2n}^2=0 
\edd
which is a contradiction. We note that
\bgd
d_{2n-2}^2=\lambda \w_2^{n-2}d_{2n}+\mu \w_2^{n-1}d_{2n-2}
\edd
where $\lambda,\mu\in\Z_2$.

\end{proof}

\hf\hf Let $\iota:W_{2,1}^n\hookrightarrow W_{2,1}^{n+1}$ be the canonical inclusion. We shall compute $\iota^*:H^*(W_{2,1}^{n+1})\to H^*(W_{2,1}^n)$, by evaluating $\iota^\ast$ on the indecomposables, in the following result.
\begin{thm}\label{wnwnp1}
The map $\iota^*:H^*(W_{2,1}^{n+1};\Z_2)\to H^*(W_{2,1}^n;\Z_2)$ induced by $\iota:W_{2,1}^{n}\to W_{2,1}^{n+1}$ is the following:
\bgd
\iota^*:H^*(W_{2,1}^{2k+2})\to H^*(W_{2,1}^{2k+1}),\,\, \w_2\mapsto \w_2,\, c_{2k}\mapsto d_{2k},\,c_{2k+1}\mapsto 0.
\edd
\bgd
\iota^*:H^*(W_{2,1}^{2k+1})\to H^*(W_{2,1}^{2k}),\,\, \w_2\mapsto \w_2,\, d_{2k-2}\mapsto c_{2k-2},\,d_{2k}\mapsto \w_2 c_{2k-2}.
\edd
\end{thm}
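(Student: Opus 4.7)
My plan is to exploit naturality of the sphere bundle $p:W_{2,1}^n\to \wt{G}_{n,2}$ under the inclusions $\wt{i}:\wt{G}_{n,2}\hookrightarrow \wt{G}_{n+1,2}$ and $\iota:W_{2,1}^n\hookrightarrow W_{2,1}^{n+1}$, which fit into a commutative square. Since Proposition \ref{mapg2ng2n+1} already computes $\wt{i}^\ast$, naturality handles every indecomposable of $H^\ast(W_{2,1}^{n+1})$ that arises as $p^\ast$ of a base class, leaving only the fibrewise indecomposables to be treated separately.

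For $\iota^\ast:H^\ast(W^{2k+2}_{2,1})\to H^\ast(W^{2k+1}_{2,1})$, naturality combined with Proposition \ref{mapg2ng2n+1} applied to $n=2k+1$ (odd) gives $\iota^\ast(\w_2)=\w_2$ and $\iota^\ast(c_{2k})=\iota^\ast(p^\ast b_{2k})=p^\ast\wt{i}^\ast(b_{2k})=p^\ast a_{2k}=d_{2k}$. The class $c_{2k+1}$ is killed on degree grounds: by Theorem \ref{HW21}(b) every generator of $H^\ast(W^{2k+1}_{2,1})$ sits in an even degree, so $H^{2k+1}(W^{2k+1}_{2,1})=0$.

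For $\iota^\ast:H^\ast(W^{2k+1}_{2,1})\to H^\ast(W^{2k}_{2,1})$, the same recipe together with Proposition \ref{mapg2ng2n+1} applied to $n=2k$ (even) yields $\iota^\ast(\w_2)=\w_2$ and $\iota^\ast(d_{2k})=p^\ast\wt{i}^\ast(a_{2k})=p^\ast(\wt{w}_2 b_{2k-2})=\w_2\,c_{2k-2}$. The subtler identity $\iota^\ast(d_{2k-2})=c_{2k-2}$ concerns a class not in the image of $p^\ast$, so a new ingredient is required. I would descend to the principal $S^1$-bundle $V_3(\R^n)\to W_{2,1}^n$, whose Euler class is $\w_2$. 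Since $H^\ast(W_{2,1}^n)$ is concentrated in even degrees for both $n=2k$ and $n=2k+1$, we have $H^{2k-3}(W_{2,1}^n)=0$, and the Gysin sequence collapses in degree $2k-2$ to an isomorphism $H^{2k-2}(W_{2,1}^n)/\w_2\,H^{2k-4}(W_{2,1}^n)\cong H^{2k-2}(V_3(\R^n))$. From \eqref{V3n} one reads off $H^{2k-2}(V_3(\R^n))=\Z_2\,a_{2k-2}$ in both cases, with $d_{2k-2}$ and $c_{2k-2}$ each descending to this generator. Because $\w_2^{k-1}=0$ in $H^\ast(W^{2k}_{2,1})$, the image of $\cup\w_2$ vanishes there, so the comparison map $H^{2k-2}(W^{2k}_{2,1})\to H^{2k-2}(V_3(\R^{2k}))$ is already an isomorphism, and $\iota^\ast(d_{2k-2})$ is pinned down by the image of $a_{2k-2}$ under the inclusion $V_3(\R^{2k})\hookrightarrow V_3(\R^{2k+1})$.

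The main obstacle is thus the classical fact that this inclusion of Stiefel manifolds sends $a_{2k-2}$ to $a_{2k-2}$ rather than to zero. This follows from Borel's description \cite{Bor53} of $H^\ast(V_k(\R^n);\Z_2)$ via the tower of sphere bundles $V_j(\R^n)\to V_{j-1}(\R^n)$: the inclusion of Stiefel manifolds respects this tower, and an induction identifies $a_i$ with $a_i$ for $n-k+1\le i\le n-1$ (while $a_n$ pulls back to zero, since it is the pullback of the fundamental class of $S^n$, which restricts trivially to the equator $S^{n-1}$). Granting this, the diagram chase above forces $\iota^\ast(d_{2k-2})=c_{2k-2}$ and completes the proof.
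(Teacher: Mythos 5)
Your treatment of $\w_2$, $c_{2k}\mapsto d_{2k}$, $d_{2k}\mapsto \w_2 c_{2k-2}$ and $c_{2k+1}\mapsto 0$ (naturality of $p:W^n_{2,1}\to\wt{G}_{n,2}$ plus Proposition \ref{mapg2ng2n+1} and a degree count) is exactly the paper's argument. For the one genuinely delicate identity, $\iota^\ast(d_{2k-2})=c_{2k-2}$, you take a different route: the paper compares the Serre spectral sequences of the $S^2$-bundles $W^n_{2,1}\xrightarrow{\mathpzc{sp}}\wt{G}_{n,3}$, identifies $c_{2k-2}$ and $d_{2k-2}$ (mod $\w_2^{k-1}$) with the explicit classes $\wt{P}_{2k-4}\otimes\mathbbm{1}$ in $E_\infty^{2k-4,2}$ via Lemma \ref{Korbas}, and needs a separate argument when $n=2^t-2$; you instead push everything up the circle bundle $V_3(\R^n)\to W^n_{2,1}$, where the Gysin sequence (using $H^{2k-3}(W^n_{2,1})=0$ and $\w_2^{k-1}=0$ in $H^\ast(W^{2k}_{2,1})$) shows $p^\ast$ identifies $c_{2k-2}$ and $d_{2k-2}$ with $a_{2k-2}$, reducing the claim to the behaviour of $a_{2k-2}$ under $V_3(\R^{2k})\hookrightarrow V_3(\R^{2k+1})$. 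Your reduction is correct and buys uniformity: no case split at $n=2^t-2$ and no appeal to the vanishing pattern of the $g_i$. Two caveats. First, your parenthetical that $H^\ast(W^{2k}_{2,1})$ is concentrated in even degrees is false ($c_{2k-1}$ lives in degree $2k-1$); what you actually need, $H^{2k-3}(W^{2k}_{2,1})=0$, still holds because the lowest odd-degree class is in degree $2k-1$. Second, and more seriously, the ``classical fact'' is the real content of your proof and your induction sketch is not yet a proof: $a_{2k-2}$ is the \emph{fibre} class of $V_3(\R^{2k+1})\to V_2(\R^{2k+1})$ but in $V_3(\R^{2k})$ it is \emph{pulled back} from $V_2(\R^{2k})$, so the naive ``fibre restricts to fibre'' comparison along the tower degenerates (the sub-fibre has strictly smaller dimension and the restriction you would read off is zero). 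The fact is true, but a clean justification goes through the stunted projective space model: the map $\R P^{n-1}/\R P^{n-4}\to V_3(\R^n)$ induced by reflections is an isomorphism on $H^i$ for $i\leq 2n-7$ and is compatible with $\R^{n}\subset\R^{n+1}$, which pins down $j^\ast(a_{2k-2})=a_{2k-2}$ in the range needed. With that supplied (or a citation to it), your argument is complete.
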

\begin{proof}
We consider the following map of fibrations:
$$
\xymatrix@C=.5cm{
S^{n-3}\ar@{^{(}->}[d]\ar[r]^-{i} & S^{n-2}\ar@{^{(}->}[d]\\
W_{2,1}^n\ar[d]^{p}\ar[r]^-{\iota} & W_{2,1}^{n+1}\ar[d]^{p}\\
\wt{G}_{n,2}\ar[r]^-{\wt{i}} & \wt{G}_{n+1,2}
}
$$
It is clear that $\iota^*(\w_2)=\w_2$ because $\w_2$ in $H^*(W_{2,1}^n)$ and $H^*(W_{2,1}^{n+1})$ are the images of the same $\wt{w}_2$ from $H^*(\wt{G}_{n,2})$ and $H^*(\wt{G}_{n+1,2})$ respectively. Now, we consider the even and odd cases separately.\\
\hf Case 1: Let $n=2k+1$ be odd. We know that $H^*(W_{2,1}^{2k+2})$ is generated by $\w_2, c_{2k}=p^\ast(b_{2k}), c_{2k+1}$ while $H^*(W_{2,1}^{2k+1})$ is generated by $\w_2, d_{2k}=p^\ast(a_{2k}), d_{2k-2}$.  It follows from Proposition \ref{mapg2ng2n+1} that $\wt{i}^\ast(b_{2k})=a_{2k}$. This implies that $\iota^\ast(c_{2k})=d_{2k}$. As $H^*(W_{2,1}^{2k+1})$ is non-zero in even degrees only, we have $\iota^*(c_{2k+1})=0$.\\
\hf Case 2: Let $n=2k$ be even. As $H^*(W_{2,1}^{2k+1})$ is generated by $\w_2, d_{2k-2}, d_{2k}=p^\ast(a_{2k})$ and $H^*(W_{2,1}^{2k})$ is generated by $\w_2, c_{2k-2}=p^\ast(b_{2k-2}), c_{2k-1}$ and $\wt{i}^\ast(a_{2k})=\wt{w}_2 b_{2k-2}$ (cf. Proposition \ref{mapg2ng2n+1}), it follows that $\iota^*(d_{2k})=\w_2 c_{2k-2}$. It remains to compute $\iota^*(d_{2k-2})$. To prove this, we need to consider the map between Serre spectral sequences (in $\Z_2$-cohomology) related to the following map between fibrations (cf. \eqref{s2wn21}).\\
\bge\label{wnwn+1}
\xymatrix@C=.5cm{
S^2\ar@{^{(}->}[d]\ar[r]^-{i} & S^{2}\ar@{^{(}->}[d]\\
W_{2,1}^n\ar[d]^{\mathpzc{sp}}\ar[r]^-{\iota} & W_{2,1}^{n+1}\ar[d]^{\mathpzc{sp}}\\
\wt{G}_{n,3}\ar[r]^-{\wt{i}} & \wt{G}_{n+1,3}
}
\ede
Note that either of the fibre bundles are unit sphere bundles of the oriented tautological $3$-plane bundle $\wt{\gamma}_{m,3}$ over the base $ \wt{G}_{m,3}$. Therefore, in both the spectral sequences the only non-zero differential is $d^3$, which is given by multiplication by cup product with $\wt{w}_3\in H^3(\wt{G}_{n,3})$. This class is actually the mod $2$ reduction of the Euler class of $\wt{\gamma}$. \\
\hf We first assume that $n=2k\neq 2^t-2$ for any $t$. Let us consider the Serre spectral sequence for the fibre bundle $S^2\hookrightarrow W_{2,1}^{2k}\xrightarrow{\mathpzc{sp}}\wt{G}_{2k,3}$. Note that $H^{2k-2}(W_{2,1}^{2k})\cong\Z_2$ generated by $c_{2k-2}$. The only non-zero differential is in the $E_3$-page, given by cup product with $\wt{w}_3$. Thus, the $E_2$-page equals the $E_3$-page and $E_3^{j,2}=H^j(\wt{G}_{2k,3})\otimes H^2(S^2)$. Let $\mathbbm{1}$ denote the generator of $H^2(S^2)\cong\Z_2$. Thus, in the $E_3$-page
\bgd
d^3:E_3^{j,2}\to E_3^{j+3,0},\,\,\alpha\otimes \mathbbm{1}\mapsto \alpha\cup\wt{w}_3
\edd
is injective whenever $j\leq 2k-5$ due to Lemma \ref{wtGysinw3}. Now consider $\bar{w}_{2k-1}$; it is a polynomial of degree $2k-1$ in $w_1, w_2$ and $w_3$. As $g_{2k-1}$ is the reduction of $\bar{w}_{2k-1}$ modulo $w_1$, $g_{2k-1}=\pi^\ast(\bar{w}_{2k-1})=0\in H^{2k-1}(\wt{G}_{2k,3})$. Observe that $g_{2k-1}$, as a polynomial in $\wt{w}_2,\wt{w}_3$ is non-zero (cf. Lemma \ref{Korbas} (i)) because otherwise $2k-1=2^t-3$, which is impossible by our assumption on $n$. Moreover, as $2k-1$ is odd, $g_{2k-1}$ cannot have $\wt{w}_2^k$ as a term. We may write $g_{2k-1}=\wt{w}_3 \wt{P}_{2k-4}$, where $\wt{P}_{2k-4}=f(\wt{w}_2,\wt{w}_3)$ is non-zero polynomial in $\wt{w}_2,\wt{w}_3$ of degree\footnote{The degree of $f$ is at most $n/2-2$ as a polynomial in two variables. However, here the variables $x$ and $y$ have internal degrees. Therefore, the degree of $x^i y^j$ is not $i+j$ but $i |x|+j|y|$, where $|x|,|y|$ denotes the degrees of $x$ and $y$ respectively.} $2k-4$. As $H^{2k-4}(G_{2k,3})$ is generated freely by polynomials in $w_1,w_2,w_3$ of degree $2k-4$, we conclude that $P_{2k-4}:=f(w_2,w_3)\neq 0$ in $H^{2k-4}(G_{2k,3})$. Now $\bar{\pi}^\ast:H^{j}(G_{2k,3})/(w_1)\to H^j(\wt{G}_{2k,3})$ is injective. Therefore, $\wt{P}_{2k-4}=\bar{\pi}^\ast(P_{2k-4})\neq 0$ and 
\bgd
E_\infty^{2k-4,2}=E_4^{2k-4,2}=\text{ker}(E_3^{2k-4,2}\xrightarrow{\cup\, \wt{w}_3}E_3^{2k-1,0})=\Z_2,
\edd
generated by $\wt{P}_{2k-4}\otimes\mathbbm{1}$. Therefore, $E_\infty^{2k-2,0}=0$ and 
\bgd
H^{2k-2}(W^{2k}_{2,1})\cong E_\infty^{2k-2,0}\oplus E_\infty^{2k-4,2}=E_\infty^{2k-4,2}.
\edd
Thus, the element $\wt{P}_{2k-4}\otimes\mathbbm{1}$ in $E_\infty^{2k-4,2}$ is identified with $c_{2k-2}\in H^{2k-2}(W^{2k}_{2,1})$. \\
\hf We now consider the Serre spectral sequence related to the fibre bundle $S^2\hookrightarrow W_{2,1}^{2k+1}\xrightarrow{\mathpzc{sp}}\wt{G}_{2k+1,3}.$ Like the last analysis, we conclude that $E_\infty^{j,2}=0$ for $j\leq (2k+1)-6=2k-5$. Thus, the first non-zero $E_\infty^{j,2}$ occurs when $j=2k-4$. Even here, $E_\infty^{2k-4,2}=P_{2k-4}\otimes \mathbbm{1}$ and this element is identified with either $d_{2k-2}$ or $d_{2k-2}+\w_2^{k-1}$. The map between spectral sequences implies that $E_\infty^{2k-4,2}\to E_\infty^{2k-4,2}$ is non-zero. It follows that
\bgd
\iota^*:H^{2k-2}(W_{2,1}^{2k+1})\to H^{2k-2}(W_{2,1}^{2k}),\,\,d_{2k-2}\to c_{2k-2}.
\edd
\hf We now assume that $n=2k=2^t-2$ for some $t\geq 3$. Then $H^{2k-2}(\wt{G}_{2k,3})$ has the first indecomposable element after degree $3$. This element will be $c_{2k-2}+\mu \w_2^{k-1}$ for some $\mu\in\mathbb{Z}_2$. Similarly, $H^{2k-2}(\wt{G}_{2k+1,3})$ has got the first indecomposable element after degree $3$. This element will be $d_{2k-2}+\lambda  \w_2^{k-1}$ for some $\lambda \in\mathbb{Z}_2.$ From the naturality of the Gysin sequence, we find that $\iota^*(d_{2k-2})+ \lambda \w_2^{k-1}=c_{2k-2}+\kappa \w_2^{k-1}$ for some $\kappa\in\mathbb{Z}_2$. So, $\iota^*(d_{2k-2})=c_{2k-2}+(\kappa+\lambda) \w_2^{k-1}=c_{2k-2}$.
\end{proof}

\section{Main Results \& Applications}

\subsection{Main Theorems}\label{mthms}

\hf\hf We shall start by discussing some relations in the cohomology ring of $G_{n,3}$.
\begin{lmm} \label {not so}
For $2^{t-1}< n\leq 2^t$, we have $w_2^{n-2^{t-1}}g_{n-3}=g_{3n-2^t-3}+P,$ where $P$ is a polynomial in $w_2,~g_i$'s with $i\geq n-1$ and each monomial contains exactly one $g_i$ with degree $1.$ 
\end{lmm}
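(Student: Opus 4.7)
Set $k := 2^{t-1}$ and $m := n - k$, so that $1 \le m \le k$, $n-3 = k+m-3$, and $3n - 2^t - 3 = k + 3m - 3$; the target identity becomes $w_2^m\, g_{k+m-3} = g_{k+3m-3} + P$. My plan is to extract this identity by comparing two expansions of the formal series $(1 + w_2 u^2)^m\,\Phi(u)$ in $\mathbb{Z}_2[w_2, w_3][[u]]$, where $\Phi(u) := \sum_{i \ge 0} g_i u^i$ and $A := 1 + w_2 u^2 + w_3 u^3$ satisfy $\Phi(u)\, A = 1$; the latter is just the generating-function form of the recursion $g_i = w_2 g_{i-2} + w_3 g_{i-3}$ satisfied by the $g_i$.

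A direct binomial expansion produces
\[
[u^{k+3m-3}]\,(1+w_2 u^2)^m\,\Phi(u) \;=\; \sum_{a=0}^{m}\binom{m}{a}\, w_2^a\, g_{k+3m-3-2a}.
\]
For a second expansion I rewrite $1 + w_2 u^2 = A - w_3 u^3$, apply the binomial theorem in characteristic $2$, and use $\Phi\cdot A = 1$ to cancel one factor of $A$ for each $b \le m-1$, obtaining
\[
(1 + w_2 u^2)^m\,\Phi(u) \;=\; \sum_{b=0}^{m-1}\binom{m}{b}\, A^{m-b-1}\, (w_3 u^3)^b \;+\; w_3^m u^{3m}\,\Phi(u).
\]
The key claim is that the $u^{k+3m-3}$-coefficient of this second expansion vanishes. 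The last summand contributes $w_3^m g_{k-3}$, which is zero by Lemma~\ref{Korbas}(i) since $k - 3 = 2^{t-1} - 3$. For the remaining summands, setting $e := m - b - 1 \ge 0$, the contribution involves $[u^{k+3e}]\, A^e$; but $A$ has $u$-degree $3$, so $A^e$ is a polynomial in $u$ of degree at most $3e < k + 3e$, forcing this coefficient to be $0$.

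Equating the two expressions for the $u^{k+3m-3}$-coefficient and isolating the $a = 0$ and $a = m$ terms (in characteristic $2$) yields
\[
w_2^m g_{k+m-3} \;=\; g_{k+3m-3} \;+\; \sum_{a=1}^{m-1}\binom{m}{a}\, w_2^a\, g_{k+3m-3-2a},
\]
so the required $P$ is precisely this sum. For $1 \le a \le m - 1$ the index $i = k + 3m - 3 - 2a$ satisfies $i \ge k + m - 1 = n - 1$, and each monomial of $P$ is $\binom{m}{a}\, w_2^a\, g_i$, linear in a single $g_i$ with coefficient in $\mathbb{Z}_2[w_2]$, matching the stated form. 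The only substantive inputs are the vanishing $g_{2^{t-1}-3} = 0$ and the elementary $u$-degree bound on $A^e$; I foresee no further technical obstacle.
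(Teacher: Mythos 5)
Your argument is correct, and it takes a genuinely different route from the paper. The paper proves the identity by induction on $n$ in steps of two, starting from the base cases $w_2^2g_{2^{t-1}-1}=g_{2^{t-1}+3}$ and $w_2g_{2^{t-1}-2}=g_{2^{t-1}}$ and repeatedly feeding the recursion $g_i=w_2g_{i-2}+w_3g_{i-3}$ through multiplications by $w_3^2$ and $w_2^2$; the error term $P$ is only tracked qualitatively through the induction. Your coefficient-extraction argument packages the same two inputs --- the recursion, in the generating-function form $\Phi\cdot A=1$, and the vanishing $g_{2^{t-1}-3}=0$ --- into a single identity, and in exchange yields the closed formula $P=\sum_{a=1}^{m-1}\binom{m}{a}w_2^a\,g_{3n-2^t-3-2a}$, from which the constraint $i\geq n-1$ is immediate (the minimal index occurs at $a=m-1$ and equals $2^{t-1}+m-1=n-1$), and which I have checked against small cases (e.g.\ $t=3$, $n=7$ gives $w_2^3g_4=g_{10}+w_2g_8+w_2^2g_6=w_2^5$). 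Your proof also treats both parities of $n$ uniformly, whereas the paper's step-two induction must handle the odd and even base cases separately. Two negligible caveats: for $t=3$ the needed vanishing is $g_1=0$, which is trivial rather than an instance of the cited lemma (whose statement requires $r\geq 3$); and you should note once that all manipulations take place in $\mathbb{Z}_2[w_2,w_3][[u]]$, so that $\binom{m}{b}A^{m-b}\Phi=\binom{m}{b}A^{m-b-1}$ for $b\leq m-1$ is legitimate. Neither affects the correctness of the proof.
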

\begin{proof}
We shall prove this by induction. When $n=2^{t-1}+2$, we have
\begin{eqnarray*}
w_2^2g_{2^{t-1}-1} & = & w_2(g_{2^{t-1}+1}+w_3g_{2^{t-1}-2})\\
& = & g_{2^{t-1}+3}+w_3g_{2^{t-1}}+w_2w_3g_{2^{t-1}-2}\\
& = & g_{2^{t-1}+3}+w_3^2g_{2^{t-1}-3}+2w_2w_3g_{2^{t-1}-2}\\
& = & g_{2^{t-1}+3}
\end{eqnarray*}
as $g_{2^{t-1}-3}=0$. Note that $w_2 g_{2^{t-1}-2}=g_{2^{t-1}}.$ Let us assume that the statement is true for $n$. We shall prove it for $n+2$. Observe that
\bgd
w_3^2g_{3n-2^t-3} = w_3^2(w_2^{n-2^{t-1}}g_{n-3}+P) = w_2^{n-2^{t-1}}w_3(g_n+w_2g_{n-2})+w_3P_1,
\edd
where $P_1$ is a polynomial in $w_2, g_i$ with $i\geq n$ and each monomial contains exactly one $g_i$ with degree $1$. Hence,
\begin{eqnarray*}
w_3^2g_{3n-2^t-3} & = & w_2^{n-2^{t-1}}(g_{n+3}+w_2g_{n+1}+w_2g_{n+1}+w_2^2g_{n-1})+P_2\\
& = & w_2^{n-2^{t-1}}(g_{n+3}+w_2^2g_{n-1})+P_2
\end{eqnarray*}
where $P_2$ is a polynomial in $w_2, g_i$ with $i\geq n+1$ and each monomial contains exactly one $g_i$ with degree $1$. Therefore, 
\begin{eqnarray*}
w_2^{n-2^{t-1}+2}g_{n-1} & = & w_3^2g_{3n-2^t-3}+w_2^{n-2^{t-1}}g_{n+3}+P_2\\
& = & g_{3n-2^t+3}+w_2^2g_{3n-2^t-1}+w_2^{n-2^{t-1}}g_{n+3}+P_2\\
& = & g_{3n-2^t+3}+P_3
\end{eqnarray*}
where $P_3$ is a polynomial in $w_2, g_i$ with $i\geq n+1$ and each monomial contains exactly one $g_i$ with degree $1$ (as $n\geq 2^{t-1}+1$ implies $3n-2^t-1\geq n+1$). 
\end{proof}
\hf\hf Consider the Serre spectral sequence related to the sphere bundle $S^2\hookrightarrow W_{2,1}^n\xrightarrow{\mathpzc{sp}}\wt{G}_{n,3}$. Here $E_2^{*,j}=0$ if $j\neq2$, and the differentials $d^i=0$ if $i\neq3$ and $d^3:E_3^{0,2}\to E_3^{3,0}$ is given by cup product with $w_3$. Note that $E_2^{i,j}\neq 0$ implies $j=0,2$. \\
\hf\hf Let $n$ be even and $n\neq 2^t,~2^t-2$. Since $n-1$ is odd, if $i\leq n-3$ then $\text{ker}(H^i(\wt{G}_{n,3})\xrightarrow{\cup \wt{w}_3}H^{i+3}(\wt{G}_{n,3}))$ is non-zero only when $i=n-4$ and $i=n-3$. In each case the dimension of the kernal is $1$; call these elements $p_{n-4}$ and $p_{n-3}$ respectively, where 
\bge\label{pneven}
\wt{w}_3 p_{n-4} = g_{n-1},\,\,\,\wt{w}_3 p_{n-3} = g_n +\wt{w}_2 g_{n-2}.
\ede
So, $E_\infty^{n-4,2}\neq0$ and $E_\infty^{n-3,2}\neq0$ and as upto degree $n-1$, $H^*(G_{n,3})\xrightarrow{\pi^*}H^*(\wt{G}_{n,3})$ is surjective, we have $c_{n-2}\not\in F_{n-2}^{n-2}$ and $c_{n-1}\not\in F_{n-1}^{n-1}.$ So, $E_\infty^{n-4,2}=\langle\bar{c}_{n-2}\rangle=\langle p_{n-4}\otimes \mathbbm{1}\rangle$ and $E_\infty^{n-3,2}=\langle\bar{c}_{n-1}\rangle=\langle p_{n-3}\otimes \mathbbm{1}\rangle.$\\
Similarly, when $n$ is odd and $n\neq 2^t-1,~2^t-3$ we have that $d_{n-3}\not\in F_{n-3}^{n-3}$ and $d_{n-1}\not\in F_{n-1}^{n-1}$. We also have $E_\infty^{n-5,2}=\langle\bar{d}_{n-3}\rangle=\langle p_{n-5}\otimes \mathbbm{1}\rangle$ and $E_\infty^{n-3,2}=\langle\bar{\w}_2 \bar{d}_{n-3},\bar{d}_{n-1}\rangle=\langle \wt{w}_2 p_{n-5}\otimes \mathbbm{1}, ~p_{n-3}\otimes \mathbbm{1}\rangle$ where 
\bge\label{pnodd1}
\wt{w}_3 p_{n-5} = g_{n-2},\,\,\,\wt{w}_3 p_{n-3} = g_n.
\ede
\hf\hf Recall \eqref{wnwn+1} in the proof of Theorem \ref{wnwnp1} that we get maps $E_\infty^{*,*}(W_{2,1}^{n+1})\xrightarrow{\iota^*}E_\infty^{*,*}(W_{2,1}^n).$ In fact, as is implied by Theorem \ref{wnwnp1} implies, when $n$ is odd
\bge\label{barnodd}
\bar{\w}_2\to \bar{\w}_2, ~\bar{c}_{n-1}\to\bar{d}_{n-1}, ~\bar{c}_n\to0
\ede
while when $n$ is even
\bge\label{barneven}
\bar{\w}_2\to\bar{\w}_2,~\bar{d}_{n-2}\to\bar{c}_{n-2},~\bar{d}_n\to \bar{\w}_2 \bar{c}_{n-2}.
\ede
Let $n$ be odd such that $n\neq 2^t-1,~2^t-3$. As $E_2^{n-3,2}(W_{2,1}^{n+1})\xrightarrow{\iota^*}E_2^{n-3,2}(W_{2,1}^n)$, we observe that $p_{n-3}\otimes \mathbbm{1}\in E_\infty^{n-3,2}$ is $\bar{d}_{n-1}$. Again we know that $\wt{w}_2 p_{n-5}\otimes \mathbbm{1}\in E_\infty^{n-3,2}$ is $\bar{\w}_2 \bar{d}_{n-3}$. Therefore,
\bge\label{barnoddd}
E_\infty^{n-3,2}\ni q_{n-3}\otimes \mathbbm{1} = \bar{d}_{n-1}+\bar{\w}_2 \bar{d}_{n-3},
\ede
where 
\bge\label{pnodd2}
q_{n-3}=p_{n-3}+\wt{w}_2 p_{n-5}.
\ede
%When $n=2^t-2$, observe that $E_\infty^{2^t-5,2}=\langle q_{2^t-5}\otimes a\rangle$ where $w_3.q_{2^t-5}=g_{2^t-2}+w_2.g_{2^t-4}.$ So, $q_{2^t-5}=g_{2^t-5}\neq0.$ So, $E_\infty^{2^t-5,2}=\langle q_{2^t-5}\otimes a\rangle=\langle\bar{b}_{2^t-3}\rangle.$ Thus, from $E_\infty^{2^t-5,2}(W_{2,1}^{2^t-2})\xrightarrow{i^*}E_\infty^{2^t-5,2}(W_{2,1}^{2^t-3})$ we see that $E_\infty^{2^t-5,2}(W_{2,1}^{2^t-3})=\langle q_{2^t-5}\otimes a\rangle=\langle\bar{b}_{2^t-3}\rangle$. As $q_{2^t-5}=g_{2^t-5}\neq$, so $b_{2^t-3}\not\in F_{2^t-5}^{2^t-5}(W_{2,1}^{2^t-3}).$
\begin{prpn}\label{P1}
Let $2^{t-1}<n\leq 2^t-4$ and consider the Serre spectral sequence regarding the sphere bundle $S^2\hookrightarrow W_{2,1}^n\xrightarrow{\mathpzc{sp}}\wt{G}_{n,3}$.\\
\textup{(i)} If $n$ is even then the least integer $i$ such that $\w_2^i c_{n-2}\in F_{2i+n-2}^{2i+n-2}$ is $2^{t-1}-\frac{n}{2}-1$ and the least integer $i$ such that $\w_2^i c_{n-1}\in F_{2i+n-1}^{2i+n-1}$ is $n-2^{t-1}.$ \\
\textup{(ii)} If $n$ is odd then \\
\hf\hf \textup{(a)} the least integer $i$ such that $\w_2^i d_{n-1}\in F_{2i+n-1}^{2i+n-1}$ is $2^{t-1}-\frac{n+3}{2}$,\\
\hf\hf \textup{(b)} the least integer $i$ such that $\w_2^i d_{n-3}\in F_{2i+n-3}^{2i+n-3}$ is $\max\{2^{t-1}-\frac{n+1}{2},n-2^{t-1}+1\}$,\\
\hf\hf \textup{(c)} the least integer $i$ such that $\w_2^{i+1}d_{n-3}+\w_2^i d_{n-1}\in F_{2i+n-1}^{2i+n-1}$ is $n-2^{t-1}.$
\end{prpn}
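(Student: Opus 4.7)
The plan is to interpret each filtration assertion as a vanishing statement for a specific polynomial in $H^*(\wt{G}_{n,3})$ via the Serre spectral sequence for $S^2\hookrightarrow W_{2,1}^n\xrightarrow{\mathpzc{sp}}\wt{G}_{n,3}$ set up above. Since only the rows $q=0$ and $q=2$ carry cohomology and the sole possibly non-trivial differential $d^3$ is cup product with $\wt{w}_3$, one has
\[E_\infty^{p,2}=\ker\bigl(\cup\,\wt{w}_3\colon H^p(\wt{G}_{n,3})\longrightarrow H^{p+3}(\wt{G}_{n,3})\bigr),\]
and for any $\alpha\in H^r(W_{2,1}^n)$ the condition $\alpha\in F_r^r$ is equivalent to the image of $\alpha$ in $E_\infty^{r-2,2}$ being zero. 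The leading terms were already recorded in the discussion preceding the statement: $c_{n-2}\leftrightarrow p_{n-4}$ and $c_{n-1}\leftrightarrow p_{n-3}$ for $n$ even, and $d_{n-3}\leftrightarrow p_{n-5}$, $d_{n-1}\leftrightarrow p_{n-3}$, $\w_2 d_{n-3}+d_{n-1}\leftrightarrow q_{n-3}$ for $n$ odd. Multiplication by $\w_2=\spn^*(\wt{w}_2)$ transports leading terms via $\wt{w}_2$, and since $\wt{w}_3\cdot\wt{w}_2^i p_*$ always equals $\wt{w}_2^i$ times one of the relations $g_{n-1}$, $g_n+\wt{w}_2 g_{n-2}$, $g_n$ or $g_{n-2}$ (each of which vanishes in $H^*(\wt{G}_{n,3})$), the leading term automatically lies in $\ker(\cup\wt{w}_3)$. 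Each case thus reduces to the algebraic question: \emph{for which minimal $i$ does $\wt{w}_2^i p_*$ vanish in $H^{2i+\deg p_*}(\wt{G}_{n,3})$?}

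A key simplification identifies $p_{n-3}=g_{n-3}$ for $n$ even and $q_{n-3}=g_{n-3}$ for $n$ odd; both follow from the recursion $g_n=\wt{w}_2 g_{n-2}+\wt{w}_3 g_{n-3}$, which over $\Z_2$ gives $\wt{w}_3 p_{n-3}=g_n+\wt{w}_2 g_{n-2}=\wt{w}_3 g_{n-3}$ in the even case and $\wt{w}_3 q_{n-3}=\wt{w}_3 g_{n-3}$ in the odd case, followed by cancellation in the integral domain $\Z_2[\wt{w}_2,\wt{w}_3]$. Lemma \ref{not so} then pinpoints the minimal $i$ at which $\wt{w}_2^i g_{n-3}$ enters the ideal $(g_{n-2},g_{n-1},g_n)$ as $i=n-2^{t-1}$, yielding the second part of (i) and assertion (ii)(c). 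For the remaining three cases -- the first part of (i), and (ii)(a), (ii)(b) -- the polynomials $p_{n-4}$ and $p_{n-5}$ are not of the form $g_*$, so I would rewrite them using the recursion (for instance, $p_{n-4}=g_{n-4}+\wt{w}_2\cdot g_{n-3}/\wt{w}_3$ for $n$ even, with $g_{n-3}$ divisible by $\wt{w}_3$ by parity) and track when their $\wt{w}_2^i$-multiples land in the ideal. The two critical target degrees $2^t-6$ and $3n-2^t-3$ arise naturally, the former being one below the indecomposable degree $2^t-4$ and accessible via $g_{2^t-3}=0$ from Lemma \ref{Korbas}(i), the latter being accessible via Lemma \ref{not so}. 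The $\max$ in (ii)(b) reflects that $\wt{w}_2^i p_{n-5}$ admits two competing annihilation mechanisms -- through the indecomposable at $2^t-4$ or through the one at $3n-2^t-1$ -- with the controlling one determined by whether $2^{t-1}-(n+1)/2$ or $n-2^{t-1}+1$ is larger.

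The main obstacle is the lower bound, i.e.\ proving that $\wt{w}_2^i p_*\neq 0$ strictly before the claimed critical $i$. The upper bound at each critical value follows from a single polynomial identity, but the lower bound demands a careful degree count in the graded quotient $\Z_2[\wt{w}_2,\wt{w}_3]/(g_{n-2},g_{n-1},g_n)$: one must rule out every way of expressing $\wt{w}_2^i p_*$ as a $\Z_2[\wt{w}_2,\wt{w}_3]$-combination of the generators in the intermediate range. I would handle this by separating contributions according to the parity of the $\wt{w}_3$-exponent and exhibiting distinguished monomials of $p_{n-4}$ and $p_{n-5}$ (arising from the $\wt{w}_3$-tails of $g_{n-1}$ and $g_{n-2}$ after division by $\wt{w}_3$) that cannot be hit by the generators until the ambient degree is sufficiently large. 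The most delicate book-keeping occurs in (ii)(a), where the decomposition $p_{n-3}=g_{n-3}+\wt{w}_2 p_{n-5}$ (valid for $n$ odd) entangles the analysis with that of (ii)(b): one must verify that the two summands of $\wt{w}_2^i p_{n-3}$ conspire to vanish simultaneously at $i=2^{t-1}-(n+3)/2$ while not vanishing earlier, even though neither summand is individually guaranteed to vanish at this index.
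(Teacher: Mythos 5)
Your reduction of the filtration statements to the vanishing of $\wt{w}_2^{\,i} p_*$ in $H^*(\wt{G}_{n,3})$ is correct and is exactly the paper's setup, and your identifications $p_{n-3}=g_{n-3}$ ($n$ even), $q_{n-3}=g_{n-3}$ ($n$ odd) together with Lemma \ref{not so} do yield the membership statements with target degree $3n-2^t-3$ (the paper's Observations B1, B2). But the other half of every assertion --- the \emph{non}-membership of $\w_2^i c_{n-2}$, $\w_2^i c_{n-1}$, $\w_2^i d_{n-1}$, $\w_2^i d_{n-3}$ in the bottom filtration for all smaller $i$ --- is left as a plan, and the plan points in a direction (ruling out membership of $\wt{w}_2^{\,i} p_*$ in the ideal $(g_{n-2},g_{n-1},g_n)$ by tracking distinguished monomials) that amounts to a hard Gr\"obner-basis-type computation which you do not carry out and which the paper never performs. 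The idea you are missing is that the ring structure of $H^*(W_{2,1}^n)$ is already known (Theorem \ref{HW21}): the top class $\w_2^{\frac{n}{2}-2}c_{n-2}c_{n-1}$ (resp.\ $\w_2^{\frac{n-3}{2}}d_{n-3}d_{n-1}$) is nonzero, while $F_{3n-7}^{3n-7}=\spn^*(H^{3n-7}(\wt{G}_{n,3}))=0$ because $3n-7$ exceeds $\dim\wt{G}_{n,3}=3n-9$. Since the filtration is multiplicative, if $\w_2^i c_{n-2}$ and $\w_2^j c_{n-1}$ with $i+j=\frac{n}{2}-2$ both lay in the bottom filtration, their product (the top class) would lie in $F_{3n-7}^{3n-7}=0$, a contradiction. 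Hence each membership statement immediately yields the complementary non-membership statement for free (Observations C1, C2, D1, D2), and no polynomial algebra beyond the two membership identities is required.

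A second, smaller gap: for the membership statements with target degree $2^t-6$ (the first part of (i) and (ii)(a)) you only gesture at rewriting $p_{n-4}$ and $p_{n-3}$ via the recursion and ``tracking when the $\wt{w}_2^{\,i}$-multiples land in the ideal.'' The paper avoids this computation as well: it verifies the single identity $\wt{w}_2\, p_{2^t-8}=g_{2^t-6}=0$ in $H^*(\wt{G}_{2^t-4,3})$ (a one-line consequence of $g_{2^t-3}=0$) and then transports the resulting relation $\bar{\w}_2\bar{c}_{2^t-6}=0$ down to $W_{2,1}^n$ through the maps $\iota^*$ on $E_\infty$-pages computed in Theorem \ref{wnwnp1}, each step of which either fixes the relevant generator or multiplies it by $\bar{\w}_2$. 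Without either this naturality argument or an explicit ideal-membership certificate for $\wt{w}_2^{\,2^{t-1}-\frac{n}{2}-1}p_{n-4}$ in degree $2^t-6$, Observations A1/A2 --- and with them the claimed critical values $2^{t-1}-\frac{n}{2}-1$ and $2^{t-1}-\frac{n+3}{2}$ --- are not established.
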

\begin{proof}
Consider the ring $H^*(W_{2,1}^{2^t-4})$ generated by $\w_2, ~c_{2^t-6},~c_{2^t-5}$. Let $p_{2^t-8}\in H^{2^t-8}(\wt{G}_{2^t-4,3})$ be such that $\wt{w}_3 p_{2^t-8}=g_{2^t-5}.$ Then 
\bgd
\langle\bar{c}_{2^t-6}\rangle=E_\infty^{2^t-8,2}=\langle p_{2^t-8}\otimes \mathbbm{1}\rangle.
\edd
As $g_{2^t-3}=0$ (follows from Lemma 2.3 of \cite{Kor15}) we have
\bgd
0=g_{2^t -3}=\wt{w}_2 g_{2^t-5}+\wt{w}_3 g_{2^t-6} =\wt{w}_2 \wt{w}_3 p_{2^t-8}+\wt{w}_3 g_{2^t-6} \Longrightarrow \wt{w}_2 p_{2^t-8}+g_{2^t-6} = 0.
\edd
Therefore, $\wt{w}_2 p_{2^t-8} = 0\in H^*(\wt{G}_{2^t-4,3})$ and $\wt{w}_2 p_{2^t-8}\otimes \mathbbm{1} = 0\in E_2^{2^t-6,2}$. This implies that $\bar{\w}_2 \bar{c}_{2^t-6}=0$ in $E_4^{2^t-4,2}$. \\[0.2cm]
{\bf Observation A1 ($n$ even)}: \vspace{-0.6cm}\begin{center}$\w_2^{2^{t-1}-\frac{n}{2}-1}c_{n-2}\in F_{2^t-4}^{2^t-4}$\end{center}
It follows by an iterated application of \eqref{barneven} and \eqref{barnodd} successively that $\bar{\w}_2^{1+s}\bar{c}_{n-2}=0\in E_{\infty}^{2^t-6,2}$ where $2s=2^t-4-n$. Therefore, $\bar{\w}_2^{2^{t-1}-\frac{n}{2}-1}\bar{c}_{n-2}=0\in E_\infty^{2^t-6,2}$, whence the claim follows.\\[0.2cm]
{\bf Observation A2 ($n$ odd)}: \vspace{-0.6cm}\bgc$\w_2^{2^{t-1}-1-\frac{n+1}{2}}d_{n-1}\in F_{2^t-4}^{2^t-4}$\edc
An iterated application of \eqref{barneven} and \eqref{barnodd} implies that $\bar{\w}_2^{1+s}\bar{d}_{n-1}=0\in E_{\infty}^{2^t-6,2}$ where $2s=2^t-5-n$. Therefore, $\bar{\w}_2^{2^{t-1}-1-\frac{n+1}{2}}\bar{d}_{n-1}=0\in E_\infty^{2^t-6,2}$, whence the claim follows. \\[0.2cm]
{\bf Observation B1 ($n$ even)}: \vspace*{-0.6cm}\bgc$\w_2^{n-2^{t-1}}c_{n-1}\in F_{3n-2^t-1}^{3n-2^t-1}$\edc
As $E_\infty^{n-3,2}$ is generated by $p_{n-3}\otimes \mathbbm{1}$, where $\wt{w}_3 p_{n-3}=\wt{w}_2g_{n-2}+g_n$, we conclude that $p_{n-3}=g_{n-3}$. So, we observe from Lemma \ref{not so} that $\wt{w}_2^{n-2^{t-1}}g_{n-3}=0$ in $H^*(\wt{G}_{n,3})$, whence
\bgd
\wt{w}_2^{n-2^{t-1}}p_{n-3}\otimes \mathbbm{1}=0\in E_\infty^{3n-2^t-3,2}.
\edd
Therefore, $\bar{\w}_2^{n-2^{t-1}}\bar{c}_{n-1}=0\in E_{\infty}^{3n-2^t-3,2}$ and this proves the claim. \\[0.2cm]
{\bf Observation B2 ($n$ odd)}: \vspace{-0.6cm}\bgc$\w_2^{n-2^{t-1}}(d_{n-1}+\w_2 d_{n-3})\in F_{3n-2^t-1}^{3n-2^t-1}$\edc
As $q_{n-3}\otimes \mathbbm{1}\neq 0\in E_\infty^{n-3,2}$, where $\wt{w}_3 q_{n-3}=\wt{w}_2 g_{n-2}+g_n$, it follows that $q_{n-3}=g_{n-3}$. As in (B1) above, we see that $\wt{w}_2^{n-2^{t-1}}g_{n-3}=0$ in $H^*(\wt{G}_{n,3})$, whence 
\bgd
\w_2^{n-2^{t-1}}p_{n-3}\otimes \mathbbm{1}=0\in E_\infty^{3n-2^t-3,2}.
\edd
As we did for (B1), we can show that using \eqref{barnoddd} that the claim follows. \\[0.2cm]
{\bf Observation C1 ($n$ even)}: \vspace{-0.6cm}\bgc$\w_2^i c_{n-2}\not\in F_{2i+n-2}^{2i+n-2}\,\,\textup{for all}\,\,i\leq 2^{t-1}-\frac{n}{2}-2$\edc
We have seen in (B1) that $\w_2^{n-2^{t-1}}c_{n-1}\in F_{3n-2^t-1}^{3n-2^t-1}$. We also know that $\w_2^{\frac{n}{2}-2}c_{n-2}c_{n-1}\not\in F_{3n-7}^{3n-7}$ as $H^{3n-7}(\wt{G}_{n,3})=0$. Thus, $\w_2^{\frac{n}{2}-2}c_{n-2}c_{n-1}=(\w_2^{n-2^{t-1}}c_{n-1})(\w_2^{2^{t-1}-\frac{n}{2}-2}c_{n-2})$ implies $\w_2^{2^{t-1}-\frac{n}{2}-2}c_{n-2}\not\in F_{2^t-6}^{2^t-6}$ and the claim follows. \\[0.2cm]
{\bf Observation C2 ($n$ odd)}: \vspace{-0.6cm}\bgc$\w_2^i d_{n-1}\not\in F_{2i+n-1}^{2i+n-1}\,\,\textup{for all}\,\,i\leq 2^{t-1}-\frac{n+3}{2}-1.$\edc \bgc $\w_2^i d_{n-3}\not\in F_{2i+n-3}^{2i+n-3}\,\,\textup{for all}\,\,i\leq 2^{t-1}-\frac{n+3}{2}$\edc
It follows from the cohomology ring as described in Theorem \ref{HW21} that
\bgc
$(\w_2^{\frac{n-3}{2} - n +2^{t-1}} d_{n-3})(\w_2^{n-2^{t-1}}d_{n-1})=(\w_2^{\frac{n-3}{2} - n +2^{t-1}} d_{n-3})(\w_2^{n-2^{t-1}}d_{n-1}+\w_2^{n-2^{t-1}+1}d_{n-3})=\w_2^{\frac{n-3}{2}}d_{n-1}d_{n-3}$.
\edc
The second claim now follows from (B2) and $\w_2^{\frac{n-3}{2}}d_{n-1}d_{n-3}\not\in F_{3n-7}^{3n-7}$ (as $H^{3n-7}(\wt{G}_{n,3})=0$). Theorem \ref{HW21} implies that
\bgc
$(\w_2^{\frac{n-3}{2} - n +2^{t-1}-1} d_{n-1})(\w_2^{n-2^{t-1}+1}d_{n-3}+\w_2^{n-2^{t-1}}d_{n-1})=\w_2^{\frac{n-3}{2}}d_{n-1}d_{n-3}$.
\edc
Combining this with (B2) we conclude the first claim.  \\[0.2cm]
{\bf Observation D1 ($n$ even)}: \vspace{-0.6cm}\bgc $\w_2^i c_{n-1}\not\in F_{2i+n-1}^{2i+n-1}\,\,\textup{for all}\,\,i\leq n-2^{t-1}-1$\edc
We have shown in (A1) that $\bar{\w}_2^{1+s}\bar{c}_{n-2}=0\in E_{\infty}^{2^t-6,2}$, where $2s=2^t-4-n$. Therefore, $\w_2^{1+s}c_{n-2}\in F_{2^t-4}^{2^t-4}.$ We see that $\w_2^{\frac{n}{2}-2}c_{n-2}c_{n-1}\not\in F_{3n-7}^{3n-7}$ because $H^{3n-7}(\wt{G}_{n,3})=0.$ Therefore, 
\bgd
\w_2^{\frac{n}{2}-2}c_{n-2}c_{n-1}=(\w_2^{1+s}c_{n-2})(\w_2^{\frac{n}{2}-3-s}c_{n-1})
\edd
implies our claim. \\[0.2cm]
{\bf Observation D2 ($n$ odd)}: \vspace{-0.6cm}\bgc $\w_2^i d_{n-3}\not\in F_{2i+n-3}^{2i+n-3}\,\,\textup{for all}\,\,i\leq n-2^{t-1}$\edc
\bgc $\w_2^i d_{n-3}+\w_2^{i-1}d_{n-1}\not\in F_{2i+n-3}^{2i+n-3}\,\,\textup{for all}\,\,i\leq n-2^{t-1}$\edc
We have seen in (A2) that $\w_2^{2^{t-1}-1-\frac{n+1}{2}}d_{n-1}\in F_{2^t-4}^{2^t-4}$. Therefore, 
\bgd
\w_2^{\frac{n-3}{2}}d_{n-3}d_{n-1}=(\w_2^{2^{t-1}-1-\frac{n+1}{2}}d_{n-1})(\w_2^{n-2^{t-1}}d_{n-3})=(\w_2^{2^{t-1}-1-\frac{n+1}{2}}d_{n-1})(\w_2^{n-2^{t-1}}d_{n-3}+\w_2^{n-2^{t-1}-1}d_{n-1})
\edd
implies that $\w_2^{n-2^{t-1}}d_{n-3}\not\in F_{3n-2^t-3}^{3n-2^t-3}$ and $\w_2^{n-2^{t-1}}d_{n-3}+\w_2^{n-2^{t-1}-1}d_{n-1}\not\in F_{3n-2^t-3}^{3n-2^t-3}.$ Both claims now follow. \\[0.2cm]
Now (A1), (C1) prove the first part of (i) while (B1), (D1) prove the second part of (i). Similarly, (A2),(C2) prove (ii)(a) while (B2),(D2) imply (ii)(b) and (ii)(c).
\end{proof}
\hf\hf Let $F_s^r$ be the filtration of $H^r(W_{2,1}^n)$ obtained with the Serre spectral sequence. It follows that $F_0^r=F_{r-2}^r$. We conclude that $F_{i-2}^i\otimes F_{j-2}^j\to F_{i+j-2}^{i+j}.$ We denote by $\mathcal{P}_r$ the following
\bge\label{P_r}
\mathcal{P}_r:=\frac{\text{Image of }\underset{q+j\leq r, p\neq0, i\neq0, p+i=r}{\oplus}F_q^p\otimes F_j^i \text{ in }F_r^r}{\text{Image of }\underset{p+i=r,p\neq0, i\neq0}{\oplus}F_p^p\otimes F_i^i \text{ in }F_r^r}.
\ede
It is seen that $P_r$ is a quotient of a subspace of $F_r^r$. There is a natural map 
\bge\label{spstar}
\frac{H^r(\wt{G}_{n,3})}{H^+(\wt{G}_{n,3})\cdot H^+(\wt{G}_{n,3})}\xrightarrow{\mathpzc{sp}^*} \frac{F_r^r}{\text{Image of }\underset{p+i=r,p\neq0,i\neq0}{\oplus}F_p^p\otimes F_i^i\text{ in }F_r^r}
\ede
Moreover, when $r\geq n$
\bgd
F_r^r = \big(\textup{Image of }\underset{q+j\leq r, p\neq0, i\neq0}{\oplus}F_q^p\otimes F_j^i \textup{ in }F_r^r\big).
\edd
%We have map 
%\bgd
%\frac{H^r(\wt{G}_{n,3})}{H^+(\wt{G}_{n,3})\cdot H^+(\wt{G}_{n,3})}\stackrel{k}{\longrightarrow} P_r,
%\edd
%where $k$ is induced from $\mathpzc{sp}^*$.
\begin{lmm} \label{important}
The map $\mathpzc{sp}^\ast$ as defined in \eqref{spstar} is a vector space isomorphism for $r>3$. 
\end{lmm}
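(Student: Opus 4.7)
The plan is to exploit the fact that the only non-trivial differential in the Serre spectral sequence for $S^2 \hookrightarrow W_{2,1}^n \xrightarrow{\mathpzc{sp}} \wt{G}_{n,3}$ is $d^3$, which is cup product with $\wt{w}_3 \in H^3(\wt{G}_{n,3})$. This will identify $F_r^r$ as the image of $\mathpzc{sp}^\ast$ in degree $r$, after which the lemma follows almost formally from $\mathpzc{sp}^\ast$ being a ring homomorphism together with the hypothesis $r > 3$.

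First, I would observe that $\mathpzc{sp}^\ast$ factors through the edge homomorphism as
$$H^r(\wt{G}_{n,3}) = E_2^{r,0} \twoheadrightarrow E_\infty^{r,0} = F_r^r/F_{r+1}^r = F_r^r \hookrightarrow H^r(W_{2,1}^n),$$
so the image of $\mathpzc{sp}^\ast$ in degree $r$ is exactly $F_r^r$, and the kernel of $\mathpzc{sp}^\ast$ is the image of the incoming $d^3$, namely $\wt{w}_3 \cdot H^{r-3}(\wt{G}_{n,3})$. Consequently $\mathpzc{sp}^\ast$ descends to an isomorphism $H^r(\wt{G}_{n,3})/(\wt{w}_3 \cdot H^{r-3}(\wt{G}_{n,3})) \xrightarrow{\cong} F_r^r$.

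Next, since $\mathpzc{sp}^\ast$ is a ring homomorphism and $F_p^p = \mathpzc{sp}^\ast(H^p(\wt{G}_{n,3}))$ for every $p$ by the previous step, the multiplication $F_p^p \cdot F_i^i$ equals $\mathpzc{sp}^\ast(H^p(\wt{G}_{n,3}) \cdot H^i(\wt{G}_{n,3}))$. Summing over $p+i = r$ with $p, i \neq 0$, the image of $\bigoplus F_p^p \otimes F_i^i \to F_r^r$ coincides with $\mathpzc{sp}^\ast(H^+(\wt{G}_{n,3}) \cdot H^+(\wt{G}_{n,3}))$ inside $F_r^r$. This identifies the denominator on the right-hand side of \eqref{spstar} with $\mathpzc{sp}^\ast$ of the decomposables on the left.

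Finally, I would invoke the hypothesis $r > 3$ to conclude both well-definedness and injectivity. For $r > 3$ we have $r-3 \geq 1$, so $\wt{w}_3 \cdot H^{r-3}(\wt{G}_{n,3}) \subseteq H^+(\wt{G}_{n,3}) \cdot H^+(\wt{G}_{n,3})$, which means $\ker \mathpzc{sp}^\ast$ consists of decomposables. The induced map $\overline{\mathpzc{sp}^\ast}$ is surjective because $\mathpzc{sp}^\ast$ surjects onto $F_r^r$; it is injective because if $x \in H^r(\wt{G}_{n,3})$ satisfies $\mathpzc{sp}^\ast(x) = \mathpzc{sp}^\ast(y)$ for some decomposable $y$, then $x - y \in \ker \mathpzc{sp}^\ast = \wt{w}_3 \cdot H^{r-3}(\wt{G}_{n,3}) \subseteq H^+ \cdot H^+$, whence $x$ is decomposable. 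I do not foresee a genuine obstacle here; the whole argument is formal once one recognises $F_r^r$ as the image of the edge homomorphism, and the role of $r > 3$ is simply to force the element $\wt{w}_3$ to contribute to decomposables rather than to indecomposables.
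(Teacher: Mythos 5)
Your argument is correct and is essentially the paper's own proof: both identify the image of $\mathpzc{sp}^\ast$ in degree $r$ with $F_r^r$ via the edge homomorphism, identify $\ker\mathpzc{sp}^\ast$ with $\wt{w}_3\cdot H^{r-3}(\wt{G}_{n,3})$ from the Gysin sequence, match the two denominators using multiplicativity of $\mathpzc{sp}^\ast$ and surjectivity onto each $F_p^p$, and invoke $r>3$ to place the kernel inside $H^+\cdot H^+$. The only cosmetic difference is that you phrase the denominator identification up front, whereas the paper folds it into the injectivity step by choosing a decomposable $P$ with $\mathpzc{sp}^\ast(x)=\mathpzc{sp}^\ast(P)$.
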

\begin{proof}
It is clear that the map $\mathpzc{sp}^*$ is a linear morphism. The fact that $\mathpzc{sp}^*$ is onto follows from  $\mathpzc{sp}^*(H^r(\wt{G}_{n,3}))=F_r^r$. Let $x\in H^*(\wt{G}_{n,3})$ be such that $\mathpzc{sp}^*(x)=0$. It follows that
\bgd
\mathpzc{sp}^*(x)\in \textup{image of }\underset{p+i=r,p\neq0,i\neq0}{\oplus}F_p^p\otimes F_i^i\textup{ in }F_r^r.
\edd
As $p,i<r$, so $H^p(\wt{G}_{n,3})\xrightarrow{\mathpzc{sp}^*}F_p^p$ and $H^i(\wt{G}_{n,3})\xrightarrow{\mathpzc{sp}^*}F_i^i$ are surjective maps. Therefore, there exists $P\in H^+(\wt{G}_{n,3})\cdot H^+(\wt{G}_{n,3})$ such that $\mathpzc{sp}^*(x)=\mathpzc{sp}^*(P)$. Hence, $\mathpzc{sp}^*(x-P)=0$ and this implies that $x-P=w_3 Q$ for some $Q\in H^*(\wt{G}_{n,3})$. As $x$ has degree $r>3$, the degree of $Q$ is at least $1$. As a consequence, $x=P+w_3 Q \in H^+(\wt{G}_{n,3})\cdot H^+(\wt{G}_{n,3}).$
\end{proof}

In the following theorem we shall compute all the indecomposables in $H^*(\wt{G}_{n,3})$.
\begin{thm}\label{fifth}
\textup{(a)} The only indecomposables in $H^*(\wt{G}_{2^t,3})$ are in degrees $2,~3,~2^t-1$ with one indecomposable in each degree.\\
\textup{(b)} The only indecomposables in $H^*(\wt{G}_{n,3})$ for $n=2^t-1,~2^t-2,~2^t-3$ are in degrees $2,~3,~2^t-4$ with one indecomposable in each degree.\\
\textup{(c)} Let $2^{t-1}<n\leq 2^t-4$. Then the only indecomposables in $H^*(\wt{G}_{n,3})$ are in degrees $2,~3,~3n-2^t-1,~2^t-4$ with one indecomposable in each degree.
\end{thm}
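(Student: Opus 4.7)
The plan is to use the Serre spectral sequence for the $2$-sphere bundle $S^2\hookrightarrow W_{2,1}^n\xrightarrow{\mathpzc{sp}}\wt{G}_{n,3}$ together with Lemma \ref{important}, which for $r>3$ identifies the space of indecomposables of $H^r(\wt{G}_{n,3})$ with the quotient $F_r^r/\sum_{p+i=r,\,p,i>0}F_p^p\cdot F_i^i$ via $\mathpzc{sp}^*$. Since $H^2(\wt{G}_{n,3})=\wt{w}_2\Z_2$ and $H^3(\wt{G}_{n,3})=\wt{w}_3\Z_2$ from the preliminaries are each one-dimensional with no possible positive-degree factorisation, the indecomposables in degrees $2$ and $3$ are immediate, so I would focus on $r>3$.

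I would treat case (c) in detail. The key initial observation is that $\mathpzc{sp}^*(\wt{w}_3)=0$, so any contribution to $F_r^r$ in degrees below the thresholds of Proposition \ref{P1} comes from $\w_2=\mathpzc{sp}^*(\wt{w}_2)$; hence any element of $F_r^r$ below those thresholds is a power of $\w_2$ and lies in $\sum F_p^p F_i^i$, producing no new indecomposable. At the critical degree $r=2^t-4$, Proposition \ref{P1} shows that $\w_2^{2^{t-1}-n/2-1}c_{n-2}$ (when $n$ is even) or $\w_2^{2^{t-1}-(n+3)/2}d_{n-1}$ (when $n$ is odd) enters $F_r^r$ for the first time; using the relation $\w_2^{n/2-1}=0$ from Theorem \ref{HW21}, this class is non-zero in $H^*(W_{2,1}^n)$ and not a power of $\w_2$ (since all powers of $\w_2$ vanish in that degree when $n\leq 2^t-4$), hence it lifts to a new indecomposable in $H^{2^t-4}(\wt{G}_{n,3})$. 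The analogous argument at $r=3n-2^t-1$, using $\w_2^{n-2^{t-1}}c_{n-1}$ ($n$ even) or $\w_2^{n-2^{t-1}}(d_{n-1}+\w_2 d_{n-3})$ ($n$ odd), produces the second new indecomposable.

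The main obstacle is the uniqueness at each critical degree, especially for $n$ odd, where Proposition \ref{P1}(ii)(b) provides a third candidate $\w_2^i d_{n-3}$ whose first entry into the bottom filtration lands at $\max\{2^t-4,\,3n-2^t-1\}$, colliding with one of the other two candidates. The resolution is a direct exponent comparison: the difference between the two colliding candidates equals $\w_2^k$ times the indecomposable living at the \emph{other} critical degree, so their difference is decomposable and the two represent the same class in $F_r^r/\sum F_p^p F_i^i$. For degrees strictly above both thresholds, the multiplicative structure of $H^*(W_{2,1}^n)$ from Theorem \ref{HW21} shows that $F_r^r$ is exhausted by products of $\w_2$ with the two newly identified indecomposables, so no further indecomposables are born. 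Cases (a) and (b) follow from the same spectral sequence strategy with degenerate input: the equality $g_{2^t-3}=0$ from Lemma \ref{Korbas}(i), together with the special form of $H^*(\wt{G}_{n,2})$ for $n=2^t,\,2^t-1,\,2^t-2,\,2^t-3$, makes the two critical candidates coincide, leaving only a single new indecomposable at $2^t-1$ in case (a) and at $2^t-4$ in case (b).
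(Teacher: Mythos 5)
Your strategy is the paper's own: the $S^2$-bundle $W_{2,1}^n\to\wt{G}_{n,3}$, Lemma \ref{important}, and Proposition \ref{P1}; and your treatment of case (c) is essentially right. In particular your resolution of the odd-$n$ collision --- the difference of the two candidates at the larger critical degree equals $\w_2^k$ times the indecomposable at the other critical degree, hence is decomposable --- is precisely the ``necessary change'' that the paper leaves to the reader. Two steps, however, are asserted rather than proved, and one of them is mischaracterized.

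First, the claim that for $r$ above both thresholds ``$F_r^r$ is exhausted by products of $\w_2$ with the two new indecomposables'' does not follow from the ring structure of $H^*(W_{2,1}^n)$ alone: additively $H^r(W_{2,1}^n)$ also contains the classes $\w_2^i c_{n-2}c_{n-1}$ (resp. $\w_2^i d_{n-3}d_{n-1}$), and Proposition \ref{P1} is silent about whether these lie in $F_r^r$. If such a class lay in $F_r^r$ with $i<\frac{n}{2}-1$ it could not be written as a product of two bottom-filtration elements (each factor would need a $\w_2$-exponent at least the relevant threshold, and the thresholds sum to $\frac{n}{2}-1$), so it would be a further indecomposable. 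The paper rules this out by a separate argument you need to add: if $\w_2^i c_{n-2}c_{n-1}\in F^{2i+2n-3}_{2i+2n-3}$, then multiplying by powers of $\w_2\in F_2^2$ would force the non-zero top class $\w_2^{\frac{n}{2}-2}c_{n-2}c_{n-1}$ into $F^{3n-7}_{3n-7}=\mathpzc{sp}^*(H^{3n-7}(\wt{G}_{n,3}))=0$, a contradiction. Second, your account of (a) and (b) is not what actually happens, and Proposition \ref{P1} is stated only for $2^{t-1}<n\leq 2^t-4$, so it supplies no input for $n=2^t,\dots,2^t-3$. The two candidate families do not ``coincide'' there: in (a) the class $c_{2^t-1}$ enters $F^{2^t-1}_{2^t-1}$ already at the zeroth power of $\w_2$, while $\w_2^i c_{2^t-2}$ \emph{never} enters the bottom filtration for any $i$ (again by the top-class argument); in (b) exactly one of $d_{2^t-4}$, $d_{2^t-4}+\w_2 d_{2^t-6}$ enters while $\w_2^i d_{2^t-6}$ never does. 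One family is absent, not coincident with the other. Moreover, locating the degree of the first indecomposable beyond $3$ in these cases ($2^t-1$, resp. $2^t-4$) requires an input your sketch does not provide; the paper imports it from \cite{Kor15}. You should either cite that, or redo the kernel computations of Proposition \ref{P1} for these four values of $n$, where the vanishing $g_{2^t-3}=0$ changes which classes survive to $E_\infty^{\ast,2}$.
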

\begin{proof}
(a) By \cite{Kor15} we know that apart from $\wt{w}_2,\wt{w}_3$ the first indecomposable in $H^*(\wt{G}_{2^t,3})$ is in degree $2^t-1$. Therefore, we will get one non-zero element in 
\bgd
\frac{F_{2^t-1}^{2^t-1}}{\textup{Image of }\underset{p+i=2^t-1,p\neq0,i\neq0}{\oplus}F_p^p\otimes F_i^i\textup{ in }F_{2^t-1}^{2^t-1}}.
\edd
As $H^{2^t-1}(W_{2,1}^{2^t})=\langle c_{2^t-1}\rangle$, it follows from Lemma \ref{important} that $c_{2^t-1}\in F_{2^t-1}^{2^t-1}$. Therefore, if for some $i\geq 0$ we have $\w_2^i c_{2^t-2}\in  F_{2i+2^t-2}^{2i+2^t-2}$ then $\w_2^{2^{t-1}-2}c_{2^t-2}c_{2^t-1}\in F_{3 \cdot 2^t-7}^{3 \cdot 2^t-7}=0$, which is a contradiction. Therefore, for any $0\leq i \leq 2^{t-1}-2$, 
\bgd
\w_2^i c_{2^t-2}\not\in F_{2i+2^t-2}^{2i+2^t-2}.
\edd
Similar arguments imply that $\w_2^i c_{2^t-2}c_{2^t-1}\not\in F_{2i+2^{t+1}-3}^{2i+2^{t+1}-3}$. Therefore, it is clear that 
\bgd
\frac{F_{r}^{r}}{\textup{Image of }\underset{p+i=r,p\neq0,i\neq0}{\oplus}F_p^p\otimes F_i^i\textup{ in }F_{r}^{r}}\neq 0
\edd
in degrees $r=2,3$ and $2^t-1$ only. Therefore, the indecomposables in $H^*(\wt{G}_{2^t,3})$ are $\wt{w}_2,~\wt{w}_3,~w_{2^t-1}.$\\[0.2cm]
For (b) the proofs for the three values of $n$ will be similar. We will prove it for $n=2^t-3$. According to \cite{Kor15}, the degree of the next indecomposable after $\wt{w}_2,~\wt{w}_3$ in $H^*(\wt{G}_{2^t-3,3})$ is $2^t-4.$ Observe that 
\bgd
E_\infty^{2^t-8,2}=\langle p_{2^t-8}\otimes \mathbbm{1}\rangle=\langle \bar{d}_{2^t-6}\rangle
\edd
where $\wt{w}_3 p_{2^t-8}=g_{2^t-5}$ and $p_{2^t-8}\neq 0$. We also observe $\wt{w}_2 p_{2^t-8}\neq0\in H^*(\wt{G}_{2^t-3,3})$. The ring structure obtained from the spectral sequence implies that $\w_2 d_{2^t-6}\not\in F_{2^t-4}^{2^t-4}.$ Therefore, exactly one of $d_{2^t-4}$ or $d_{2^t-4}+\w_2 d_{2^t-6}\in F_{2^t-4}^{2^t-4}$. \\
\hf If $d_{2^t-4}\in F_{2^t-4}^{2^t-4}$ then $\w_2^i d_{2^t-6}\not\in F_{2i+2^t-6}^{2i+2^t-6}$ for $i\leq \frac{2^t-6}{2}$      and $\w_2^i (d_{2^t-4}+\w_2 d_{2^t-6})\not\in F_{2i+2^t-4}^{2i+2^t-4}$ for $i<\frac{2^t-6}{2}$. We also have
\bgd
\w_2^{\frac{2^t-6}{2}}(d_{2^t-4}+\w_2 d_{2^t-6})=\w_2^{\frac{2^t-6}{2}}d_{2^t-4}.
\edd
\hf If $d_{2^t-4}+\w_2 d_{2^t-6}\in F_{2^t-4}^{2^t-4}$ then $\w_2^i d_{2^t-6}\not\in F_{2i+2^t-6}^{2i+2^t-6}$ for $i\leq \frac{2^t-6}{2}$. Hence, $\w_2^i d_{2^t-4}\not\in F_{2i+2^t-4}^{2i+2^t-4}$ for $i<\frac{2^t-6}{2}$. Again we have
\bgd
\w_2^{\frac{2^t-6}{2}}(d_{2^t-4}+\w_2 d_{2^t-6})=\w_2^{\frac{2^t-6}{2}}d_{2^t-4}.
\edd
 Therefore, in either cases, we observe that 
\bgd
\frac{F_{r}^{r}}{\textup{Image of }\underset{p+i=r,p\neq0,i\neq0}{\oplus}F_p^p\otimes F_i^i\textup{ in }F_{r}^{r}}=\left\{\begin{array}{rl}
0 & \textup{if $r>2^t-4$}\\
\Z_2 & \textup{if $r=2^t-4$}.
\end{array}\right.
\edd
Thus, by Lemma \ref{important}, $H^*(\wt{G}_{2^t-3,3})$ has only one indecomposable in degree $2^t-4$ and no indecomposable in higher degrees.\\
(c) We shall first do the case when $2^{t-1}<n\leq 2^t-4.$ In Proposition \ref{P0}, we have seen that the degree of the first indecomposable after $\wt{w}_2,\wt{w}_3$ is at least $n$. To obtain the indecomposables of degree at least $n$, we shall use Lemma \ref{important}. We shall first do this for an even integer $n$. The case when $n$ is odd will follow similarly.\\
\hf We see from the proof of Proposition \ref{P1}, $\w_2^i c_{n-2}\not\in F_{2i+n-2}^{2i+n-2}$ for $i\leq 2^{t-1}-\frac{n}{2}-2$ and $\w_2^i c_{n-1}\not\in F_{2i+n-1}^{2i+n-1}$ for $i\leq n-2^{t-1}-1$. We observe that $\w_2^i c_{n-2}c_{n-1}\not\in F_{2i+2n-3}^{2i+2n-3}$ as if $\w_2^i c_{n-2}c_{n-1}\in F_{2i+2n-3}^{2i+2n-3}$ then $\w_2^{\frac{n}{2}-2}c_{n-2}c_{n-1}\in F_{3n-7}^{3n-7}$, which is a contradiction. Therefore, via Proposition \ref{P1}, the only possible indecomposables will correspond to  
\bgd
\w_2^i c_{n-2}\,\,\,\textup{if}\,\,i\geq 2^{t-1}-\textstyle{\frac{n}{2}}-1\,\,\,\,\textup{and}\,\,\,\,\w_2^i c_{n-1}\,\,\,\textup{if}\,\,i\geq n-2^{t-1}.
\edd
If $i>2^{t-1}-\frac{n}{2}-1$ then
\bgd
\w_2^i c_{n-2}=\w_2^{i-(2^{t-1}-\frac{n}{2}-1)}\w_2^{2^{t-1}-\frac{n}{2}-1}c_{n-2}\in \underset{p+i=2i+n-2,p\neq0,i\neq0}{\oplus}F_p^p\otimes F_i^i.
\edd
Similarly, for $i>n-2^{t-1}$ we have 
\bgd
\w_2^i c_{n-1}\in\underset{p+i=2i+n-2,p\neq0,i\neq0}{\oplus}F_p^p\otimes F_i^i.
\edd
Thus, the only two possible indecomposables correspond to $\w_2^{2^{t-1}-\frac{n}{2}-1}c_{n-2}$ and $\w_2^{n-2^{t-1}}c_{n-1}.$\\
\hf We shall now prove that these two elements are non-zero in $\mathcal{P}_{2^t-4}$ and $\mathcal{P}_{3n-2^t-1}$ (cf. \eqref{P_r}). Let the degree of these two elements $\alpha_{i_1},\alpha_{i_2}$ be $i_1,i_2$ where $i_1<i_2.$ If $\alpha_{i_1}$ is in the image of $\underset{p+i=i_1,p\neq0,i\neq0}{\oplus}F_p^p\otimes F_i^i$ in $F_{i_1}^{i_1}$ then it is a polynomial in $\w_2$ in $H^*(W_{2,1}^n)$. As any monomial of degree at least $n$ in $\w_2$ is zero, the element $\alpha_{i_1}$ is indecomposable. Now, if $\alpha_{i_2}$ is in the image of $\underset{p+i=i_2,p\neq0,i\neq0}{\oplus}F_p^p\otimes F_i^i$, then we shall get relations of the form
\begin{eqnarray*}
\alpha_{i_2}=\w_2^{2^{t-1}-\frac{n}{2}-1}c_{n-2} & = & \w_2^{i_2-i_1}  \w_2^{n-2^{t-1}}c_{n-1}\,\,\textup{if $i_2=2^t-4>3n-2^t-1=i_1$} \\
\alpha_{i_2}=\w_2^{n-2^{t-1}}c_{n-1} & = & \w_2^{i_2-i_1} \w_2^{2^{t-1}-\frac{n}{2}-1}c_{n-2} \,\,\textup{if $i_1=2^t-4<3n-2^t-1=i_2$}. 
\end{eqnarray*}
In both cases, it leads to a contradiction as one side is odd while the other is even in degree.\\
\hf The case when $2^{t-1}<n\leq 2^t-4$ and $n$ is odd, will be proved similarly with necessary changes. 
\end{proof}
\hf\hf We do not have any natural choice for the indecomposables in degrees higher than $3$. Thus, if there is an indecomposable in degree $i$, we choose an arbitary indecomposable element in this degree and denote it by $w_{i}$. Note that $w_{2^t-4}w_{3n-2^t-1}=0$ as this element is in degree $3n-5$ which is beyond the dimension of $\wt{G}_{n,3}$.
\begin{defn}
Let $n$ be even and $n\neq 2^t,~2^t-2$. Define  
\bgd
v_{2^t-8}:=\wt{w}_2^{2^{t-1}-2-\frac{n}{2}}p_{n-4},\,\,v_{3n-2^t-5}:=\wt{w}_2^{n-2^{t-1}-1}p_{n-3}
\edd
where $\wt{w}_3 p_{n-4} = g_{n-1}$ and $\wt{w}_3 p_{n-3} = g_n +\wt{w}_2 g_{n-2}$ (cf. \eqref{pneven}). \\
\hf Let $n$ be odd and $n\neq 2^t-1,~2^t-3$. Define 
\bgd
v_{2^t-8}=\wt{w}_2^{2^{t-1}-2-\frac{n+1}{2}}p_{n-3},\,\,v_{3n-2^t-5}=\wt{w}_2^{n-2^{t-1}-1}q_{n-3}
\edd
where $\wt{w}_3 p_{n-3}=g_n$ and $\wt{w}_3 q_{n-3}=g_n+\wt{w}_2g_{n-2}$ (cf. \eqref{pnodd1} and \eqref{pnodd2}).
\end{defn}
From Proposition \ref{P1} and the discussion preceeding it, we conclude that  
\bge\label{w23van}
\wt{w}_2 v_{2^t-8}=0,\,\,\wt{w}_3 v_{2^t-8}=0,\,\,\wt{w}_2 v_{3n-2^t-5}=0,\,\,\wt{w}_3 v_{3n-2^t-5}=0.
\ede
It is crucial that $v_{2^t-8}$ and $v_{3n-2^t-5}$ are both non-zero elements in the cohomology ring. 
\begin{thm}\label{relations}
\textup{(a)} Let $2^{t-1}<n\leq 2^t-4$. Then we have the following relations:\\
\hf \textup{(i)} $v_{2^t-8}^2=0,\,v_{3n-2^t-5}^2=0,\,v_{2^t-8} v_{3n-2^t-5}=0$;\\
\hf \textup{(ii)} $v_{2^t-8} w_{2^t-4}=0,\,v_{3n-2^t-5} w_{3n-2^t-1}=0,\,w_{2^t-4}w_{3n-2^t-1}=0$;\\
%\hf \textup{(iii)} $w_{2^t-4}^2=0$ if $3n-2^t-1<2^t-4$ and $w_{3n-2^t-1}^2=0$ if $3n-2^t-1>2^t-4$;\\
\hf \textup{(iii)} $H^{3n-9}(\wt{G}_{n,3})=\langle v_{2^t-8} w_{3n-2^t-1}\rangle=\langle v_{3n-2^t-5} w_{2^t-4}\rangle$;\\
\hf \textup{(iv)} the following hold:\\[-0.4cm]
\begin{eqnarray*}
w_{2^t-4}^2=0,\,\,w_{3n-2^t-1}^2 & = & \mathcal{R}_1+w_{3n-2^t-1}\mathcal{R}_2+w_{2^t-4}\mathcal{R}_3\,\,\,\,\textup{if $3n-2^t-1<2^t-4$}\\
w_{3n-2^t-1}^2=0,\,\,w_{2^t-4}^2 & = & \mathcal{Q}_1+w_{3n-2^t-1}\mathcal{Q}_2+w_{2^t-4}\mathcal{Q}_3\,\,\,\,\textup{if $3n-2^t-1>2^t-4$}
\end{eqnarray*}
where $\mathcal{R}_i,\mathcal{Q}_i$'s are polynomials in $\wt{w}_2,\wt{w}_3$.\\
\hf \textup{(v)} For any $P(\wt{w}_2,\wt{w}_3)\neq0 \in H^*(\wt{G}_{n,3})$ we have either $P|v_{3n-2^t-5}$ or $P|v_{2^t-8}$, i.e., there exists a monomial $Q(\wt{w}_2,\wt{w}_3)\in \pi^*(H^*(G_{n,3}))$ such that $P\cdot Q=v_{3n-2^t-5}$ or $P\cdot Q=v_{2^t-8}$ respectively;\\
\hf \textup{(vi)} If $P(\wt{w}_2,\wt{w}_3)|v_{3n-2^t-5}$, then $P w_{2^t-4}\not\in \pi^*(H^*(G_{n,3}))$. Similarly, if $P|v_{2^t-8}$, then $P w_{3n-2^t-1}\not\in \pi^*(H^*(G_{n,3}))$.\\
\textup{(b)} If $n=2^t$ then 
\bgd
H^*(\wt{G}_{2^t,3})=\frac{\frac{\mathbb{Z}_2[\wt{w}_2,\wt{w}_3]}{\langle g_{2^t-2}, g_{2^t-1}\rangle}\otimes \mathbb{Z}_2[w_{2^t-1}]}{\langle w_{2^t-1}^2-P w_{2^t-1}\rangle}
\edd
for some $P(\wt{w}_2,\wt{w}_3)\in \pi^*(H^{2^t-1}(G_{2^t,3}))$.\\
\textup{(c)} If $n=2^t-1,~2^t-2,~2^t-3$ then 
\bgd
H^*(\wt{G}_{n,3})=\frac{\frac{\mathbb{Z}_2[\wt{w}_2,\wt{w}_3]}{\langle g_{n-2},g_{n-1},g_n\rangle}\otimes \mathbb{Z}_2[w_{2^t-4}]}{\langle w_{2^t-4}^2-P_1 w_{2^t-4}-P_2 \rangle}
\edd
for some $P_1(\wt{w}_2,\wt{w}_3),P_2(\wt{w}_2,\wt{w}_3)\in\pi^*(H^*(G_{n,3}))$ with $P_2=0$ for $n=2^t-2,~2^t-3$.
\end{thm}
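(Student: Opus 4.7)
The overall plan rests on three pillars: the classification of indecomposables in Theorem \ref{fifth}, the annihilator relations \eqref{w23van} asserting $\wt{w}_2 v=\wt{w}_3 v=0$ for both $v$ classes, and the Serre spectral sequence of $S^2\hookrightarrow W_{2,1}^n \xrightarrow{\mathpzc{sp}} \wt{G}_{n,3}$ analyzed in Proposition \ref{P1}, together with the ring structure of $W_{2,1}^n$ from Theorem \ref{HW21}.

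For part (a)(i), both $v_{2^t-8}$ and $v_{3n-2^t-5}$ lie in $\pi^\ast(H^\ast(G_{n,3}))$ in positive degree, hence inside the ideal $(\wt{w}_2,\wt{w}_3)$. Writing $v=\wt{w}_2 a+\wt{w}_3 b$ and applying \eqref{w23van} collapses every mixed product $v\cdot v'$ to zero. For parts (a)(ii)--(iii), the product $w_{2^t-4} w_{3n-2^t-1}$ has degree $3n-5>\dim \wt{G}_{n,3}=3n-9$ and so vanishes for free. For $v_{2^t-8}w_{2^t-4}$ and $v_{3n-2^t-5}w_{3n-2^t-1}$, I would pull back along $\mathpzc{sp}^\ast$; by the proof of Theorem \ref{fifth} the $w$ classes lift to (a power of $\w_2$ times) $c_{n-2}$ or $c_{n-1}$ in $W_{2,1}^n$, while the $v$ classes lift to themselves. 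Applying the annihilator relations, the lifted products land in $\ker\mathpzc{sp}^\ast=\wt{w}_3 \cdot H^\ast(\wt{G}_{n,3})$; combining with (a)(i) and a degree count forces them to zero. Statement (iii) is obtained by lifting $v_{2^t-8} w_{3n-2^t-1}$ to $v_{2^t-8}\cdot \w_2^{n-2^{t-1}}c_{n-1}$ (and symmetrically for the other product), and invoking Poincaré duality of $W_{2,1}^n$ via Theorem \ref{HW21} to identify it with the top class.

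For (a)(iv), the square landing in the larger degree is controlled by the fact that the only indecomposables in $H^\ast(\wt{G}_{n,3})$ are $\wt{w}_2,\wt{w}_3,w_{3n-2^t-1},w_{2^t-4}$ (Theorem \ref{fifth}); writing an arbitrary element of the target degree in this basis yields the stated $\mathcal{R}_1+w_{3n-2^t-1}\mathcal{R}_2+w_{2^t-4}\mathcal{R}_3$ shape, and the spectral-sequence lift to $W_{2,1}^n$ (where the squares of $c_{n-2}$ and $c_{n-1}$ vanish by Theorem \ref{HW21}) forces the square at the smaller degree to be $0$. Claim (v) is the content of Poincaré duality inside $\pi^\ast(H^\ast(G_{n,3}))$: any $P\neq 0$ must pair non-trivially with some element in the complementary degree, and the structure established for $v_{2^t-8},v_{3n-2^t-5}$ exhibits them as the two natural top classes of the polynomial subring, forcing $P$ to divide one or the other. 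Claim (vi) then follows by contradiction: if $P\mid v_{3n-2^t-5}$ and simultaneously $Pw_{2^t-4}\in\pi^\ast(H^\ast(G_{n,3}))$, multiplying by the cofactor would express $v_{3n-2^t-5}w_{2^t-4}$ as an element of $\pi^\ast(H^\ast(G_{n,3}))$, contradicting (iii) together with the observation that the generator of $H^{3n-9}$ is not in the polynomial subring.

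For parts (b) and (c), Theorem \ref{fifth} supplies the generating sets and the Borel relations $g_{n-2},g_{n-1},g_n$ descend from $H^\ast(G_{n,3})$ via the Gysin sequence for $\pi$; the only remaining relation is the expression of $w_{2^t-1}^2$ or $w_{2^t-4}^2$. For $n=2^t$ the lift $\mathpzc{sp}^\ast(w_{2^t-1})=c_{2^t-1}$ (since $c_{2^t-1}$ is the unique odd-degree generator of $H^{2^t-1}(W_{2,1}^{2^t})$) combined with $c_{2^t-1}^2=0$ in $W_{2,1}^{2^t}$ (Theorem \ref{HW21}(a)) places $w_{2^t-1}^2$ in $\wt{w}_3\cdot H^\ast(\wt{G}_{2^t,3})$, and Poincaré duality at the top degree pins the decomposition to the form $Pw_{2^t-1}$. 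An analogous argument handles $w_{2^t-4}^2$ in (c), and for $n=2^t-2,2^t-3$ a parity/degree count on the candidate monomials rules out the $P_2$ term. The \emph{main obstacle} will be (a)(iv): pinning down $w_{2^t-4}^2$ and $w_{3n-2^t-1}^2$ requires simultaneous bookkeeping in the spectral sequences of both $\mathpzc{sp}$ and $\pi$, tracking how the $\w_2$-filtration interacts with the (non-canonical) choice of indecomposable representative.
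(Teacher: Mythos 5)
Your toolkit covers the easy parts of (a) — (i), the degree-forced vanishings in (ii) and (iv), and (vi) — and your idea of deducing the \emph{shape} of $w_{3n-2^t-1}^2$ (resp.\ $w_{2^t-4}^2$) from $c_{n-1}^2=c_{n-2}^2=0$ in $W^n_{2,1}$ plus a monomial degree count is a workable alternative to the paper's argument there. But two central steps fail.

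First, (iii) cannot be proved by pulling back to $W^n_{2,1}$. By the Gysin sequence $\ker\mathpzc{sp}^*=\wt{w}_3\cdot H^{*-3}(\wt{G}_{n,3})$, and Poincar\'e duality forces $H^{3n-9}(\wt{G}_{n,3})=\wt{w}_3\cdot H^{3n-12}(\wt{G}_{n,3})$ (the pairing $H^3\otimes H^{3n-12}\to H^{3n-9}$ is nondegenerate and $H^3=\Z_2\wt{w}_3$), so $\mathpzc{sp}^*$ annihilates the \emph{entire} top degree. Concretely, the class you propose to exhibit as a top class, $\mathpzc{sp}^*(v_{2^t-8})\cdot\w_2^{n-2^{t-1}}c_{n-1}$, is already zero in $H^*(W^n_{2,1})$: since $n-2^{t-1}\geq 1$, it equals $\mathpzc{sp}^*(\wt{w}_2 v_{2^t-8})\cdot\w_2^{n-2^{t-1}-1}c_{n-1}=0$ by \eqref{w23van}. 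Nothing about $H^{3n-9}(\wt{G}_{n,3})$ is visible in $W^n_{2,1}$. The paper instead argues directly in $\wt{G}_{n,3}$: the Poincar\'e dual of $v_{2^t-8}$ must have the form $P(\wt{w}_2,\wt{w}_3)+\lambda w_{3n-2^t-1}$, the $P$-part dies against $v_{2^t-8}$ by \eqref{w23van}, so $\lambda=1$. The same failure infects the nontrivial half of your (ii): showing $\mathpzc{sp}^*(v_{3n-2^t-5}w_{3n-2^t-1})=0$ only places that product in $\wt{w}_3 H^*(\wt{G}_{n,3})$, and no degree count makes an element of the form $\wt{w}_3u$ vanish; the paper deduces this vanishing \emph{from} (iii) together with the indecomposability of $w_{2^t-4}$.

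Second, (v) is asserted rather than proved. There is no ``Poincar\'e duality inside $\pi^*(H^*(G_{n,3}))$'': that subring vanishes in degree $3n-9$, so the Poincar\'e dual of a nonzero polynomial class necessarily involves $w_{3n-2^t-1}$ or $w_{2^t-4}$, and extracting from this a \emph{monomial} cofactor $Q$ with $PQ=v$ is exactly what must be proved. What is really needed is that the only nonzero classes in $\pi^*(H^*(G_{n,3}))$ annihilated by both $\wt{w}_2$ and $\wt{w}_3$ are $v_{2^t-8}$ and $v_{3n-2^t-5}$; granted that, one multiplies $P$ repeatedly by $\wt{w}_2$ or $\wt{w}_3$ until the product stabilizes at one of them. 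The paper establishes this via a tool entirely absent from your outline: the Serre spectral sequence of $SO(3)\hookrightarrow V_3(\R^n)\to\wt{G}_{n,3}$, in which such classes are precisely the survivors $c\otimes a^3$ in $E_4^{*,3}$, together with a Poincar\'e-duality rank argument showing $E_4^{*,3}$ contains only $v_{2^t-8}\otimes a^3$ and $v_{3n-2^t-5}\otimes a^3$. Without this (or an equivalent substitute), (v) — and hence the divisibility input to the cup-length corollary — is unsupported; the analogous spectral-sequence step is also what the paper uses to pin down the relations in (b) and (c) beyond the first reduction $w^2\in\wt{w}_3\cdot H^*$.
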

\begin{proof}
(a) (i): It follows from the definition of $v_{2^t-8}$ and $\wt{w}_2 v_{2^t-8}=0$ that $v_{2^t-8}^2=0$ if $v_{2^t-8}$ has $\wt{w}_2$ as a factor, i.e., if $2^t-5>n$. When $n=2^t-4,2^t-5$ we get $v_{2^t-8}=p_{n-4}$ is a polynomial (of degree $n-4$) in $\wt{w}_2$ and $\wt{w}_3$. Due to \eqref{w23van}, it follows that $v_{2^t-8}^2=p_{n-4}^2=0$. Identical reasons imply that $v_{3n-2^t-5}^2=0$ as well as $v_{2^t-8} v_{3n-2^t-5}=0$. \\[0.2cm]
\hf (ii)-(iii): Case I : $3n-2^t-1<2^t-4$\\
Due to degree reasons (the degree of the classes being higher than the dimension $3n-9$ of $\wt{G}_{n,3}$) it follows that 
\bge\label{ww1}
w_{2^t-4} w_{3n-2^t-1}=0,\,\,w_{2^t-4}^2=0,\,\,v_{2^t-8} w_{2^t-4}=0.
\ede
By Poincar\'{e} duality, there exists $P+\lambda w_{3n-2^t-1}\in H^{3n-2^t-1}(\wt{G}_{n,3})$, where $P$ is a polynomial in $\wt{w}_2,\wt{w}_3$  and $\lambda\in\mathbb{Z}_2$, such that $v_{2^t-8}(P+\lambda w_{3n-2^t-1})=1\in H^{3n-9}(\wt{G}_{n,3}).$ The fact that any polynomial in $\wt{w}_2,\wt{w}_3$ of dimension $3n-9$ in $H^*(\wt{G}_{n,3})$ is $0$ implies that $\lambda=1$. Therefore, $v_{2^t-8}w_{3n-2^t-1}=1\in H^{3n-9}(\wt{G}_{n,3}).$ By Poincar\'{e} duality, there will be an element 
\bgd
\check{v}:=Q_1(\wt{w}_2,\wt{w}_3)+Q_2(\wt{w}_2,\wt{w}_3)w_{3n-2^t-1}+\lambda w_{2^t-4}\in H^{2^t-4}(\wt{G}_{n,3})
\edd
such that $\check{v}v_{3n-2^t-5}=1\in H^{3n-9}(\wt{G}_{n,3})$. Here $Q_1, Q_2$ are polynomials in $\wt{w}_2,\wt{w}_3$ and $\lambda\in\mathbb{Z}_2.$ Observe that there is no term $Q_3w_{3n-2^t-1}^2$ as $2(3n-2^t-1)>2^t-4$. Due to \eqref{w23van}, $Q_1,Q_2$ annihilates $v_{3n-2^t-5}$, whence $\lambda=1$ and $w_{2^t-4}v_{3n-2^t-5}=1$. This proves (iv). \\
\hf It remains to prove that $v_{3n-2^t-5} w_{3n-2^t-1}=0$. If not then there will be an element $u\in H^*(\wt{G}_{n,3})$ such that $v_{3n-2^t-5} w_{3n-2^t-1} u=1\in H^{3n-9}(\wt{G}_{n,3})$. From the previous paragraph, we conclude that $w_{3n-2^t-1} u=Q_1+Q_2 w_{3n-2^t-1}+w_{2^t-4}$. This is a contradiction as $w_{2^t-4}$ is an indecomposable in $H^*(\wt{G}_{n,3})$. Therefore,  $v_{3n-2^t-5} w_{3n-2^t-1}=0.$\\
\hf (ii)-(iii): Case II : $3n-2^t-1>2^t-4$\\
Due to degree reasons (the degree of the classes being higher than the dimension $3n-9$ of $\wt{G}_{n,3}$) it follows that 
\bge\label{ww2}
w_{2^t-4} w_{3n-2^t-1}=0,\,\,w_{3n-2^t-1}^2=0,\,\,v_{3n-2^t-5} w_{3n-2^t-1}=0.
\ede
By Poincar\'{e} duality, there exists $P+\lambda w_{2^t-4}\in H^{2^t-4}(\wt{G}_{n,3})$, where $P$ is a polynomial in $\wt{w}_2,\wt{w}_3$  and $\lambda\in\mathbb{Z}_2$, such that $v_{3n-2^t-5}(P+\lambda w_{2^t-4})=1\in H^{3n-9}(\wt{G}_{n,3}).$ The fact that any polynomial in $\wt{w}_2,\wt{w}_3$ of dimension $3n-9$ in $H^*(\wt{G}_{n,3})$ is $0$ implies that $\lambda=1$. Therefore, $v_{3n-2^t-5}w_{2^t-4}=1\in H^{3n-9}(\wt{G}_{n,3}).$ By Poincar\'{e} duality, there will be an element 
\bgd
\check{v}:=Q_1(\wt{w}_2,\wt{w}_3)+Q_2(\wt{w}_2,\wt{w}_3)w_{2^t-4}+\lambda w_{3n-2^t-1}\in H^{3n-2^t-1}(\wt{G}_{n,3})
\edd
such that $\check{v}v_{2^t-8}=1\in H^{3n-9}(\wt{G}_{n,3})$. Here $\lambda\in\mathbb{Z}_2$, $Q_1, Q_2$ are polynomials in $\wt{w}_2,\wt{w}_3$ and there is no term of the form $Q_3(\wt{w}_2,\wt{w}_3)w_{2^t-4}^2$ as $2(2^t-4)>3n-2^t-1$. Due to \eqref{w23van}, $Q_1,Q_2$ annihilates $v_{2^t-8}$, whence $\lambda=1$ and $w_{3n-2^t-1}v_{2^t-8}=1$. This proves (iv). \\
\hf It remains to prove that $v_{2^t-8} w_{2^t-4}=0$. Suppose that $v_{2^t-8}w_{2^t-4}\neq 0$. Then there will be an element $u\in H^*(\wt{G}_{n,3})$ such that $v_{2^t-8} w_{2^t-4} u=1\in H^{3n-9}(\wt{G}_{n,3})$. From the previous paragraph, we conclude that $w_{2^t-4} u=Q_1+Q_2 w_{2^t-4}+w_{3n-2^t-1}$. This is a contradiction as $w_{3n-2^t-1}$ is an indecomposable in $H^*(\wt{G}_{n,3})$. Therefore,  $v_{2^t-8} w_{2^t-4}=0.$\\

(iv)-(v): Consider the Serre spectral sequence corresponding to the fibre bundle $SO(3)\hookrightarrow V_3(\mathbb{R}^n)\xrightarrow{p_1} \wt{G}_{n,3}.$  Let the cohomology of $SO(3)\cong\mathbb{RP}^3$ be $\mathbb{Z}_2[a]/\langle a^4\rangle$. As $n\geq 9$ we note that (cf. \eqref{V3n}) 
\bgd
H^*(V_3(\mathbb{R}^n))\cong\Z_2 [a_{n-3},a_{n-2},a_{n-1}]/\langle a_{n-3}^2, a_{n-2}^2, a_{n-1}^2\rangle
\edd
contains no non-zero elements in cohomology of degree $2$ or $3$. Therefore, in the $E_2$-page, $d^2(1\otimes a)=\wt{w}_2$ and in the $E_3$-page $d^3(1\otimes a^2)=\wt{w}_3$. So, $v_{3n-2^t-5}\otimes a^3$ and $v_{2^t-8}\otimes a^3$ will survive to the $E_4$-page. \\[0.2cm]
Case I: $3n-2^t-1<2^t-4$\\
There is no non-zero cohomology in degree $3n-2^t-2$ for $V_3(\mathbb{R}^n)$. Due to \eqref{ww1}, the only non-zero elements in $E_4^{*,0}$ are $\bar{w}_{3n-2^t-1}^i~(i\geq0)$ and $\bar{w}_{2^t-4}$. Note that here $\bar{w}_{2^t-4}$ denotes its image in the $E_4$-page and is not to be confused with the notation used at the outset. Therefore, we conclude that $d^4(v_{3n-2^t-5}\otimes a^3)=\bar{w}_{3n-2^t-1}$ and  
\bgd
\bar{w}_{3n-2^t-1}^2=d^4((v_{3n-2^t-5}\otimes a^3)(w_{3n-2^t-1}\otimes 1))=d^4(v_{3n-2^t-5} w_{3n-2^t-1}\otimes a^3)=d^4(0)=0.
\edd
In particular, it follows that
\bge\label{wsq}
w_{3n-2^t-1}^2=\mathcal{R}_1+w_{3n-2^t-1}\mathcal{R}_2+w_{2^t-4}\mathcal{R}_3
\ede
where $\mathcal{R}_i, i=1,2,3$ are polynomials in $\wt{w}_2,\wt{w}_3$. It now follows that $E_4^{j,0}$ is non-zero only when $j=3n-2^t-1,~2^t-4$ and $E_4^{3n-2^t-1,0}=\langle \bar{w}_{3n-2^t-1}\rangle$, $E_4^{2^t-4,0}=\langle \bar{w}_{2^t-4}\rangle.$ We claim that $E_4^{j,3}$ is nonzero only for $j=3n-2^t-5, 2^t-8$ and in each case the dimension is $1.$ If possible, let $c_i\otimes a^3\in E_4^{i,3}$. There exists $c_j\in H^j(\wt{G}_{n,3})$ such that $i+j=3n-9$ and $c_i c_j=1.$ So, in the $E_4$- page $\overline{c_i\otimes a^3}\cdot \bar{c_j}\neq0.$ Therefore, for $\bar{c_j}$ to be non-zero we need $j=3n-2^t-1,2^t-4.$ So, $i=3n-2^t-5, 2^t-8.$  If $E_4^{3n-2^t-5,3}$ has rank at least $2$ then let $v_{3n-2^t-5},z$ be two linearly independent elements in it. As $z\in H^{3n-2^t-5}(\wt{G}_{n,3})$ is a non-zero element, by Poincar\'{e} duality and the fact that $2(3n-2^t-1)>2^t-4$ (as $n>2^{t-1}$) there exists polynomials $R_1,R_2$ (in $\wt{w}_2$ and $\wt{w}_3$) and $\lambda\in\Z_2$ such that
\bge\label{zdual}
z (R_1+R_2 w_{3n-2^t-1}+\lambda w_{2^t-4})=1\in H^{3n-9}(\wt{G}_{n,3}).
\ede 
Due to degree reasons, all the polynomials $R_i$'s, if they are non-zero, are of positive degree. If $z w_{2^t-4}=0$ then we may set $\lambda=0$. Otherwise, $z w_{2^t-4}=1$. Now $\lambda=0$ gives a contradiction when we consider \eqref{zdual} in $E_4$; the right hand side is non-zero while the co-factor of $z$ in the left hand side is zero. If $\lambda=1$ (and $z w_{2^t-4}=1$) then consider the Poincar\'{e} dual of $v_{3n-2^t-5}+z$ as in \eqref{zdual}. As $(v_{3n-2^t-5}+z)w_{2^t-4}=0$, the dual can be chosen such that $\lambda=0$ and we again arrive at a contradiction. Thus, the rank of $E^{3n-2^t-5,3}_4$ is one. Similar arguments apply to $E_4^{2^t-8,3}$, i.e., the only non-zero elements in $E_4^{*,3}$ are $v_{3n-2^t-5}\otimes a^3$ and $v_{2^t-8}\otimes a^3$. \\[0.2cm]
Case II: $3n-2^t-1>2^t-4$\\
There is no non-zero cohomology in degree $2^t-4$ for $V_3(\mathbb{R}^n)$. Due to \eqref{ww2}, the only non-zero elements in $E_4^{*,0}$ are $\bar{w}_{2^t-4}^i~(i\geq0)$ and $\bar{w}_{3n-2^t-1}$. Therefore, we conclude that $d^4(v_{2^t-8}\otimes a^3)=w_{2^t-4}$ and  
\bgd
\bar{w}_{2^t-4}^2=d^4((v_{2^t-8}\otimes a^3)(w_{2^t-4}\otimes 1))=d^4(v_{2^t-8} w_{2^t-4}\otimes a^3)=d^4(0)=0.
\edd
In particular, it follows that
\bge\label{wsq2}
w_{2^t-4}^2=\mathcal{Q}_1+w_{3n-2^t-1}\mathcal{Q}_2+w_{2^t-4}\mathcal{Q}_3
\ede
where $\mathcal{Q}_i, i=1,2,3$ are polynomials in $\wt{w}_2,\wt{w}_3$. It now follows that $E_4^{j,0}$ is non-zero only when $j=3n-2^t-1,~2^t-4$ and $E_4^{3n-2^t-1,0}=\langle \bar{w}_{3n-2^t-1}\rangle$, $E_4^{2^t-4,0}=\langle \bar{w}_{2^t-4}\rangle.$ We claim that $E_4^{j,3}$ is nonzero only for $j=3n-2^t-5, 2^t-8$ and in each case the dimension is $1.$ If possible, let $c_i\otimes a^3\in E_4^{i,3}$. There exists $c_j\in H^j(\wt{G}_{n,3})$ such that $i+j=3n-9$ and $c_i c_j=1.$ So, in the $E_4$- page $\overline{c_i\otimes a^3}\cdot \bar{c_j}\neq0.$ Therefore, for $\bar{c_j}$ to be non-zero we need $j=3n-2^t-1,2^t-4.$ So, $i=3n-2^t-5, 2^t-8.$  If $E_4^{3n-2^t-5,3}$ has rank at least $2$ then let $v_{3n-2^t-5},z$ be two linearly independent elements in it. As $z\in H^{3n-2^t-5}(\wt{G}_{n,3})$ is a non-zero element, by Poincar\'{e} duality and the fact that $2(2^t-4)>3n-2^t-1$ (as $n\leq 2^{t}-4$) there exists polynomials $R_1$ (in $\wt{w}_2$ and $\wt{w}_3$) and $\lambda\in\Z_2$ such that
\bge\label{zdual2}
z (R_1+\lambda w_{2^t-4})=1\in H^{3n-9}(\wt{G}_{n,3}).
\ede 
Due to degree reasons, the polynomials $R_1$, if non-zero, are of positive degree. If $z w_{2^t-4}=0$ then we may set $\lambda=0$. Otherwise, $z w_{2^t-4}=1$. Now $\lambda=0$ gives a contradiction when we consider \eqref{zdual2} in $E_4$; the right hand side is non-zero while the co-factor of $z$ in the left hand side is zero. If $\lambda=1$ (and $z w_{2^t-4}=1$) then consider the Poincar\'{e} dual of $v_{3n-2^t-5}+z$ as in \eqref{zdual2}. As $(v_{3n-2^t-5}+z)w_{2^t-4}=0$, the dual can be chosen such that $\lambda=0$ and we again arrive at a contradiction. Thus, the rank of $E^{3n-2^t-5,3}_4$ is one. Similar arguments apply to $E_4^{2^t-8,3}$, i.e., the only non-zero elements in $E_4^{*,3}$ are $v_{3n-2^t-5}\otimes a^3$ and $v_{2^t-8}\otimes a^3$. \\
\hf Now let us consider both cases (either $3n-2^t-1<2^t-4$ or $3n-2^t-1>2^t-4$) together. Let $P(\wt{w}_2,\wt{w}_3)\neq0\in H^*(\wt{G}_{n,3})$. If $P$ is either $v_{3n-2^t-5}$ or $v_{2^t-8}$, then we are done. Otherwise $P\otimes a^3\not\in E_4^{*,3}$ implies either $\wt{w}_2 P\neq0$ or $\wt{w}_3 P\neq 0.$ If $\wt{w}_2 P\neq0$, then $\wt{w}_2 P$ is either $v_{3n-2^t-5}$ or $v_{2^t-8}$ or $(\wt{w}_2 P)\otimes a^3 \not\in E_4^{\ast,3}$. Thus, either $\wt{w}_2 \wt{w}_2 P\neq 0$ or $\wt{w}_3 \wt{w}_2 P\neq 0$. After repeating this process finitely many times (as $H^*(\wt{G}_{n,3})$ is finite dimensional) we shall get $\wt{w}_2^r \wt{w}_3^s P$ is either $v_{3n-2^t-5}$ or $v_{2^t-8}$ for some $r,s$. Therefore, either $P|v_{3n-2^t-5}$ or $P|v_{2^t-8}.$ Finally note that \eqref{wsq} and \eqref{wsq2} imply (iv).\\[0.2cm]
(vi): Let $P|v_{3n-2^t-5}$ and suppose if possible that $P w_{2^t-4}\in \pi^*(H^*(G_{n,3}))$. Let $Q$ be a monomial in $\wt{w}_2,\wt{w}_3$ such that $PQ=v_{3n-2^t-5}$. As $Q\in \pi^\ast(H^\ast(G_{n,3}))$, it follows that 
\bgd
PQw_{2^t-4}=v_{3n-2^t-5} w_{2^t-4}=1\in\pi^*(H^{3n-9}(G_{n,3})),
\edd
which is a contradiction as $\pi^*(H^{3n-9}(G_{n,3}))=0$. \\

(b) From the proof of Theorem \ref{fifth}, we see that $\mathpzc{sp}^*:H^{2^t-1}(\wt{G}_{2^t,3})\to H^{2^t-1}(W_{2,1}^{2^t})$ that sends $w_{2^t-1}$ to $c_{2^t-1}.$ As $c_{2^t-1}^2=0$, it follows that $w_{2^t-1}^2=\wt{w}_3 P(\wt{w}_2,\wt{w}_3,w_{2^t-1})$ for some polynomial $P$. Observe that $E_\infty^{2^t-4,2}=\langle p_{2^t-4}\otimes \mathbbm{1}\rangle=\langle\bar{c}_{2^t-2}\rangle$ where $p_{2^t-4}\neq0\in H^{2^t-4}(\wt{G}_{2^t,3})$ satisfies $\wt{w}_3 p_{2^t-4}=g_{2^t-1}.$ So, from the proof of Theorem \ref{fifth} (a), we note that $\wt{w}_2^{2^{t-1}-2}p_{2^t-4}\neq0$ and 
\bgd
\wt{w}_2 \wt{w}_2^{2^{t-1}-2}p_{2^t-4}=\wt{w}_3 \wt{w}_2^{2^{t-1}-2}p_{2^t-4}=0\in H^*(\wt{G}_{2^t,3}).
\edd
Denote $\wt{w}_2^{2^{t-1}-2}p_{2^t-4}$ as $v_{2^{t+1}-8}.$ Also note that $v_{2^{t+1}-8}\in \pi^*(H^*(G_{2^t,3})).$\\
\hf Now, consider the spectral sequence corresponding to the fibre bundle $SO(3)\hookrightarrow V_3(\mathbb{R}^{2^t})\xrightarrow{p_1}\wt{G}_{2^t,3}.$ As observed earlier, $w_{2^t-1}^2=\wt{w}_3 P(\wt{w}_2,\wt{w}_3,w_{2^t-1})$ for some polynomial $P$. This implies that $\bar{w}_{2^t-1}^i=0$ in $E_4^{\ast,0}$ when $i\geq 2$. Thus, $E_4^{j,0}$ is nonzero only when $j=2^t-1$ and $E_4^{2^t-1,0}=\langle \bar{w}_{2^t-1}\rangle.$ So, $E_4^{i,3}$ is zero when $i\neq2^{t+1}-8$ and $E_4^{2^{t+1}-8}$ has dimension at most $1$. We already have noticed that $v_{2^{t+1}-8}\otimes a^3\in E_4^{2^{t+1}-8,3}.$ Thus, $v_{2^{t+1}-8}w_{2^t-1}=1\in H^{3(2^t-3)}(\wt{G}_{2^t,3})$ and $v_{2^{t+1}-8}$ is the element of highest degree in $\pi^*(H^*(G_{2^t,3})).$ As we did for (a), we again get if $Q(\wt{w}_2,\wt{w}_3)\neq0$ and $Q(\wt{w}_2,\wt{w}_3)\in\pi^*(H^*(G_{2^t,3}))$, then there is a monomial $Q_1(\wt{w}_2,\wt{w}_3)\in\pi^*(H^*(G_{2^t,3}))$ such that $Q(\wt{w}_2,\wt{w}_3)Q_1(\wt{w}_2,\wt{w}_3)=v_{2^{t+1}-8}$. This will again imply that $Q(\wt{w}_2,\wt{w}_3) w_{2^t-1}\not\in \pi^*(H^*(G_{2^t,3})).$ Therefore, we get the expected result. \\

(c) The proof is similar to the proof of (b).
\end{proof}

\begin{rem} \label{remark}
Results similar to Theorem \ref{relations} (a) (vi) hold in the following cases: \\
\textup{(a)} When $n=2^t$ there exists $v_{2^{t+1}-8}\neq0\in \pi^*(H^*(G_{n,3}))$ such that for any $P(\wt{w}_2,\wt{w}_3)\neq0\in \pi^*(H^*(G_{n,3}))$, we have a monomial $Q(\wt{w}_2,\wt{w}_3)\in \pi^*(H^*(G_{n,3}))$ with the property $P Q=v_{2^{t+1}-8}$ and $v_{2^{t+1}-8}w_{2^t-1}=1$.\\
\textup{(b)} When $n=2^t-1,~2^t-2,~2^t-3$ there exists $v_{3n-2^t-5}\neq0\in \pi^*(H^*(G_{n,3}))$ such that for any $P(\wt{w}_2,\wt{w}_3)\neq0\in \pi^*(H^*(G_{n,3}))$, we have a monomial $Q(\wt{w}_2,\wt{w}_3)\in \pi^*(H^*(G_{n,3}))$ with the property $PQ=v_{3n-2^t-5}$ and $v_{3n-2^{t}-5}w_{2^t-4}=1$
\end{rem}

\begin{cor} \label{cuplength}
For any integer $n\in[2^{t-1}+\frac{2^{t-1}}{3},2^t-2]$, the $\mathbb{Z}_2$-cup-length $\texttt{cup}(\wt{G}_{n,3})$ is bounded as follows:
\bgd
\max\{2^{t-1}-3,\textstyle{\frac{4(n-3)}{3}}-2^{t-1}+3\}\leq \texttt{cup}(\wt{G}_{n,3})\leq \frac{3(n-3)}{2}-2^{t-1}+3.
\edd
\end{cor}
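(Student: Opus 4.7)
The proof proceeds in two parts. In the range $n \in [2^{t-1}+2^{t-1}/3,\,2^t-2]$, for $n \leq 2^t-4$ we are in Case II of Theorem \ref{relations}(a)(iv): $w_{3n-2^t-1}^2 = 0$, $w_{2^t-4} w_{3n-2^t-1} = 0$, and $w_{2^t-4}^2$ reduces to a polynomial in the other generators. The boundary cases $n = 2^t-3, 2^t-2$ are handled by Theorem \ref{relations}(c) and Remark \ref{remark}(b). A key preliminary observation is that the top class $\tau \in H^{3n-9}(\wt{G}_{n,3})$ is not in $\pi^\ast H^\ast(G_{n,3})$ (since an orientation-preserving double cover annihilates the fundamental class over $\Z_2$); hence $\tau$ necessarily involves $w_{2^t-4}$ or $w_{3n-2^t-1}$.

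\textbf{Upper bound.} By Poincar\'e duality applied to $\wt{G}_{n,3}$, $\texttt{cup}(\wt{G}_{n,3})$ equals the maximal $r$ such that $\tau = x_1 \cdots x_r$ for positive-degree classes $x_i$. Indeed, given any nonzero product $\prod x_i$, multiplying by a suitable Poincar\'e dual produces $\tau$ with at most one additional factor. By the preceding remark, at least one factor $x_j$ must contribute $w_{2^t-4}$ or $w_{3n-2^t-1}$ in its expansion, forcing $|x_j| \geq \min(2^t-4,\,3n-2^t-1) = 2^t-4$. The remaining $r-1$ factors each have $|x_i| \geq 2$, so $\sum_{i \neq j} |x_i| \leq 3n-9-(2^t-4) = 3n-5-2^t$, yielding $r \leq 1 + (3n-5-2^t)/2 = (3n-3-2^t)/2 = \frac{3(n-3)}{2}-2^{t-1}+3$.

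\textbf{Lower bounds.} Both follow from the top-class identities $\tau = v_{2^t-8}\, w_{3n-2^t-1} = v_{3n-2^t-5}\, w_{2^t-4}$ of Theorem \ref{relations}(iii). For the $2^{t-1}-3$ bound, recall that $v_{2^t-8}$ is a polynomial in $\wt{w}_2, \wt{w}_3$ of degree $2^t-8$. Using the explicit form from Proposition \ref{P1} (via the defining equation $\wt{w}_3 p = g$), one verifies that $\wt{w}_2^{2^{t-1}-4}\, w_{3n-2^t-1} = \tau$ in $H^{3n-9}(\wt{G}_{n,3})$, producing a factorization of $\tau$ into $2^{t-1}-3$ atomic factors. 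For the $\frac{4(n-3)}{3}-2^{t-1}+3$ bound, decompose $v_{3n-2^t-5} = \wt{w}_2^{n-2^{t-1}-1}\, p_{n-3}$ (or $q_{n-3}$ when $n$ is odd) and factor $p_{n-3}$ (of degree $n-3$) into $\lfloor(n-3)/3\rfloor$ atomic $\wt{w}_3$-factors with the remainder in $\wt{w}_2$'s; multiplying by $w_{2^t-4}$ gives a cup-factorization of length $(n-2^{t-1}-1) + \lfloor(n-3)/3\rfloor + 1 = \frac{4(n-3)}{3}-2^{t-1}+3$, of total degree $3n-9$.

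\textbf{Main obstacle.} The delicate step is confirming that the specific monomials $\wt{w}_2^{2^{t-1}-4}$ and $\wt{w}_2^{n-2^{t-1}-1}\wt{w}_3^{\lfloor(n-3)/3\rfloor}$ pair nontrivially with $w_{3n-2^t-1}$ and $w_{2^t-4}$ respectively, rather than mapping to zero in top degree. This reduces to a combinatorial analysis of $g_n, g_{n-1}, g_{n-2}$ expanded as polynomials in $\wt{w}_2, \wt{w}_3$ via the inverse of $1+\wt{w}_2+\wt{w}_3$, combined with Theorem \ref{relations}(v),(vi), which convert polynomial divisibility by $v_{2^t-8}$ or $v_{3n-2^t-5}$ into the required non-triviality of cup products with the new indecomposables. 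The hypothesis $n \geq 2^{t-1}+2^{t-1}/3$ is precisely what keeps the relevant exponents positive and ensures the atomic-factor counts align with the claimed bounds.
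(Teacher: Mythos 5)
Your overall strategy coincides with the paper's: identify, via Theorem \ref{relations} and Remark \ref{remark}, that a maximal nonzero product must consist of a monomial in $\wt{w}_2,\wt{w}_3$ times one of the high-degree indecomposables, obtain the upper bound by degree counting, and obtain the lower bounds by exhibiting factorizations of the top class $v_{2^t-8}\,w_{3n-2^t-1}=v_{3n-2^t-5}\,w_{2^t-4}$. Your upper bound argument is complete and correct (the observation that every class of degree below $\min(2^t-4,3n-2^t-1)=2^t-4$ lies in $\pi^*H^*(G_{n,3})$, which vanishes in top degree, forces one factor of degree at least $2^t-4$; this is the same count the paper performs via $2a_1+3b_1=3n-2^t-5$).

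The lower bounds, however, contain a genuine gap, and it is exactly the step you flag as the ``main obstacle'': you never establish that the specific products you write down are nonzero. For the bound $2^{t-1}-3$ you assert that ``one verifies'' $\wt{w}_2^{2^{t-1}-4}w_{3n-2^t-1}=\tau$ by a combinatorial analysis of the $g_i$'s, but this is precisely the nontrivial height computation; the paper does not do it by hand but instead imports Korba\v{s}'s result \cite{Kor10} that $\wt{w}_2^{2^{t-1}-4}\neq 0$ in $H^*(\wt{G}_{2^{t-1},3})$, pulls it back along the inclusions $\wt{G}_{2^{t-1},3}\hookrightarrow\cdots$ to get $\wt{w}_2^{2^{t-1}-4}\neq0$ in $H^{2^t-8}(\wt{G}_{n,3})$, and then applies Theorem \ref{relations}(v): the nonzero polynomial $\wt{w}_2^{2^{t-1}-4}$ must divide $v_{2^t-8}$ or $v_{3n-2^t-5}$, and in either case its product with the corresponding indecomposable (possibly after multiplying by a further monomial, a case you omit) is the top class. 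Without this external input your argument does not close. Similarly, for the bound $\frac{4(n-3)}{3}-2^{t-1}+3$, ``factoring $p_{n-3}$ into atomic $\wt{w}_3$-factors'' is not meaningful as stated, since $p_{n-3}$ is a polynomial rather than a monomial; what is actually needed (and what the paper extracts from the description of $v_{3n-2^t-5}=\wt{w}_2^{n-2^{t-1}-1}p_{n-3}$ together with Theorem \ref{relations}(iii),(v)) is a bound $a_1+b_1\geq n-2^{t-1}-1+\frac{n-3}{3}$ valid for every monomial $\wt{w}_2^{a_1}\wt{w}_3^{b_1}$ representing $v_{3n-2^t-5}$, plus the existence of at least one such nonzero representative. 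You should either carry out the combinatorial verification you defer to, or cite the height result as the paper does.
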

\begin{proof}
The lower bound on $n$ implies that $3n-2^t-5> 2^t-8$. This further implies that
\bge\label{cuplb}
n-2^{t-1}-1+{\textstyle \frac{n-3}{3}}> 2^{t-1}-2-{\textstyle \frac{n}{2}+\frac{n-4}{3}}.
\ede
From Theorem \ref{relations}, \eqref{wsq2} and remark \ref{remark}, we observe that the monomial attaining $\texttt{cup}(\wt{G}_{n,3})$ will be of the form:\\
(1) $\wt{w}_2^{a_1}\wt{w}_3^{b_1}w_{2^t-4}$ or $\wt{w}_2^{a_2}\wt{w}_3^{b_2}w_{3n-2^t-1}$ if $n\in[2^{t-1}+\frac{2^{t-1}}{3},2^t-4]$;\\
(2) $\wt{w}_2^{a_1}\wt{w}_3^{b_1}w_{2^t-4}$ if $n=2^t-3,~2^t-2$.\\
It follows from \cite{Kor10} that $\wt{w}_2^{2^{t-1}-4}\neq 0$ in $H^{2^{t-1}-4}(\wt{G}_{2^{t-1},3})$. Note that 
\bgd
\wt{i}:\wt{G}_{n,3}\to \wt{G}_{n+1,3},\,\,\wt{i}^\ast:H^2(\wt{G}_{n+1,3})\xrightarrow{\cong}H^2(\wt{G}_{n,3})
\edd
induces an isomorphism that maps $\wt{w}_2$ to $\wt{w}_2$. Thus, 
\bgd
\wt{w}_2^{2^{t-1}-4}\neq 0\,\,\textup{in $H^{2^t-4}(\wt{G}_{n,3})$ for any integer $n\in [2^{t-1},2^t-1]$}.
\edd
Thus, due to Theorem \ref{relations} (v), we have
\bgd
\wt{w}_2^{2^{t-1}-4}=v_{2^t -8}\,\,\,\textup{or}\,\,\,\wt{w}_2^{2^{t-1}-4}\wt{w}_2^{a_1-2^{t-1}+4}\wt{w}_3^{b_1}=v_{3n-2^t-5}.
\edd
Thus, the cup-length is at least $2^{t-1}-3$. The description of $v_{3n-2^t-5}$ and $v_{2^t-8}$ (cf. Theorem \ref{relations} (iii)) implies that
\begin{eqnarray*}
n-2^{t-1}-1+\frac{n-3}{3} & \leq a_1+b_1\leq & \frac{3n-2^t-5}{2}\,\,\,\textup{for any $n\neq 2^t-3$}\\
n-2^{t-1}-1+\frac{n-2}{3} & \leq a_1+b_1\leq & \frac{3n-2^t-5}{2}\,\,\,\textup{for $n=2^t-3$}\\
2^{t-1}-2-\frac{n}{2}+\frac{n-4}{3} & \leq a_2+b_2\leq & 2^{t-1}-4\,\,\,\textup{for $n$ even}\\
2^{t-1}-2-\frac{n+1}{2}+\frac{n-3}{3} & \leq a_2+b_2\leq & 2^{t-1}-4\,\,\,\textup{for $n$ odd}.
\end{eqnarray*}
Therefore, combining the above with \eqref{cuplb}, we obtain
\bgd
\max\{2^{t-1}-3,n-2^{t-1}-1+\frac{n-3}{3}+1\}\leq \texttt{cup}(\wt{G}_{n,3})\leq \frac{3n-2^t-5}{2}+1
\edd
which simplifies to the bounds as claimed.
\end{proof}
Observe that the cup-length is attained by the length of a term of the form $\wt{w}_2^{a_1}\wt{w}_3^{b_1}w_{2^t-4}$. This is expected as among $w_{3n-2^t-5}$ and $w_{2^t-4}$, the latter is the indecomposable with the smaller degree in the above cases. 
%\hf\hf We recall Fukaya's conjecture \cite{Fuk08}. \\[0.2cm]
%{\bf Fukaya's Conjecture}\hf {\it The $\Z_2$-cup-length of $\wt{G}_{n,3}$ is given by 
%\bgd
%\texttt{cup}_{\Z_2}(\wt{G}_{n,3})=\left\{\begin{array}{rl}
%2^t-3 & \textup{if $2^t-1\leq n\leq 2^t+2^{t-1}-3$}\\
%2^t-3+k & \textup{if $n=2^t+2^{t-1}-2+k$, $0\leq k\leq 2$  }\\
%\big(\sum_{i=j+1}^t 2^i\big)+2^{j-1}+k & \textup{if $n=\big(\sum_{i=j}^t 2^i\big)+k+1$ for some $0\leq k\leq 2^{j-1}-1$}
%\end{array}\right.
%\edd}\\[0.1cm]
%%Note that when $j=1,k=0$ in the last case above, the value of the cup-length equals that given by the first case. 
%It is evident from the table below that our lower bounds equal the answer suggested by the conjecture for the following integer values if $t\geq 4$.
%\bgd
%\begin{array}{|c|c|c|}
%\hline
%n & \texttt{Conjectured answer} & \texttt{Our lower bound}\\
%\hline
%2^t+2^{t-1}-3 & 2^t-3 & 2^{t}+2^{t-1} -5\\
%\hline
%2^t+2^{t-1}-2 & 2^t-3 & 2^t +2^{t-1} -3\\
%\hline
%2^t+2^{t-1}-1 & 2^t-2 & 2^t +2^{t-1} -2\\
%\hline
%2^t+2^{t-1} & 2^t-1 & 2^{t} +2^{t-1} -1\\
%\hline
%\end{array}
%\edd

\subsection{Characteristic Rank and Upper Characteristic Rank}\label{crucr}

In \cite{Kor15} it has been proved that $\texttt{charrank}(\wt{\gamma}_{n,k})\geq n-k+1$ for $k\geq 5$ and $n\geq 2k.$ Here we improve the previously known result.
\begin{thm} \label{k5}
If $k\geq 5$ and $n\geq 2k$, then characteristic rank of $\wt{\gamma}_{n,k}$ is at least $n-k+2$.
\end{thm}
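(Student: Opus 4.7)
The plan is to extend the Gysin-sequence argument of Proposition \ref{P0} from $k=3$ to $k\geq 5$. Since $\texttt{charrank}(\wt\gamma_{n,k})\geq n-k+2$ is equivalent to $\pi^\ast:H^j(G_{n,k};\Z_2)\to H^j(\wt G_{n,k};\Z_2)$ being surjective for all $j\leq n-k+2$, and since Korba\v{s}'s result establishes this for $j\leq n-k+1$, only the single new degree $j=n-k+2$ needs to be addressed. By the Gysin sequence of the double cover $\pi$ (the analogue of \eqref{Gysinw1} for general $k$), $\pi^\ast$ is surjective in this degree if and only if the subgroup $\ker\bigl(\cup w_1:H^{n-k+2}(G_{n,k})\to H^{n-k+3}(G_{n,k})\bigr)$ vanishes.

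To verify this, I take $x\in H^{n-k+2}(G_{n,k};\Z_2)$ with $w_1 x=0$ and lift it to a polynomial $\tilde x\in\Z_2[w_1,\ldots,w_k]$ of weighted degree $n-k+2$. Then $w_1\tilde x$ lies in the defining ideal $\langle\bar w_{n-k+1},\ldots,\bar w_n\rangle$, and in degree $n-k+3$ the only relevant generators are $w_1^2\bar w_{n-k+1}$, $w_2\bar w_{n-k+1}$, $w_1\bar w_{n-k+2}$, and $\bar w_{n-k+3}$; hence
\bgd
w_1\tilde x \,=\, \alpha_1 w_1^2\bar w_{n-k+1}+\alpha_2 w_2\bar w_{n-k+1}+\alpha_3 w_1\bar w_{n-k+2}+\alpha_4\bar w_{n-k+3}
\edd
for some $\alpha_i\in\Z_2$. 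Reducing modulo $w_1$ produces the governing identity $\alpha_2 w_2 g_{n-k+1}+\alpha_4 g_{n-k+3}=0$ in $\Z_2[w_2,\ldots,w_k]$. Lemma \ref{Korbas}(ii) forces both $g_{n-k+1}$ and $g_{n-k+3}$ to be nonzero (for $k\geq 5$, $n\geq 2k$), and since $w_2$ is a non-zero-divisor, the cases $(\alpha_2,\alpha_4)\in\{(1,0),(0,1)\}$ are immediately excluded. Once the remaining case $(\alpha_2,\alpha_4)=(1,1)$ is also ruled out (see below), one has $\alpha_2=\alpha_4=0$; dividing by $w_1$ in the polynomial ring gives $\tilde x=\alpha_1 w_1\bar w_{n-k+1}+\alpha_3\bar w_{n-k+2}$, which vanishes in cohomology since $\bar w_{n-k+1}=\bar w_{n-k+2}=0$ there. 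Hence $x=0$ and the kernel vanishes.

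The main obstacle is the case $\alpha_2=\alpha_4=1$. Using the recursion $g_{n-k+3}=w_2 g_{n-k+1}+\sum_{j=3}^{k}w_j g_{n-k-j+3}$, the identity $w_2 g_{n-k+1}=g_{n-k+3}$ translates into
\bge\label{k5rel}
w_3 g_{n-k}+w_4 g_{n-k-1}+\cdots+w_k g_{n-2k+3}\,=\,0\quad\text{in }\Z_2[w_2,\ldots,w_k].
\ede
To contradict \eqref{k5rel}, I would specialise at $w_2=0$: the polynomials $h_i:=g_i|_{w_2=0}\in\Z_2[w_3,\ldots,w_k]$ inherit the recursion $h_i=w_3 h_{i-3}+\cdots+w_k h_{i-k}$, so the left-hand side of \eqref{k5rel} specialises to $h_{n-k+3}$. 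The theorem thereby reduces to the combinatorial sub-lemma that $h_i\neq 0$ for all $k\geq 5$ and $i\geq k+3$. By Lucas' theorem this is equivalent to exhibiting a decomposition $i=\sum_\ell j_\ell\cdot 2^{e_\ell}$ with $j_\ell\in\{3,\ldots,k\}$ and pairwise distinct shifts $e_\ell\in\Z_{\geq 0}$; I would establish this via the single-term decompositions $w_3^{i/3}$, $w_4^{i/4}$, $w_5^{i/5}$ when $3\mid i$, $4\mid i$, or $5\mid i$, and handle the remaining residues by explicit two- or three-term combinations drawn from the shifts of $\{3,4,5\}$, exploiting the freedom $i\geq k+3\geq 8$. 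Verifying this sub-lemma uniformly in $n$ and $k$ will be the most delicate step of the argument.
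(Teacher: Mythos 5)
Your overall strategy coincides with the paper's: reduce via the Gysin sequence of the double cover to showing that $\ker\bigl(\cup w_1:H^{n-k+2}(G_{n,k})\to H^{n-k+3}(G_{n,k})\bigr)$ vanishes, write $w_1\tilde x$ in terms of the four degree-$(n-k+3)$ ideal elements, and rule out the coefficients $\alpha_2,\alpha_4$. The only real difference is cosmetic: you dispose of $\alpha_4$ by specialising the mod-$w_1$ identity at $w_2=0$ and appealing to $h_{n-k+3}:=g_{n-k+3}|_{w_2=0}\neq 0$, whereas the paper exhibits a specific monomial $w_3^{p_1}w_4^{p_2}w_5^{p_3}$ occurring in $\bar w_{n-k+3}$ with odd coefficient and notes that no such $w_1$-free, $w_2$-free monomial can occur in the other three terms. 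These are the same fact: such a monomial is precisely a witness that $h_{n-k+3}\neq 0$. (Your separate treatment of the cases $(1,0)$, $(0,1)$, $(1,1)$ is harmless but redundant — once $h_{n-k+3}\neq0$ is known, the specialisation kills $\alpha_4$ outright and then $\alpha_2$ follows from Lemma \ref{Korbas}(ii) and integrality of the polynomial ring.)

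The one genuine gap is exactly the step you flag at the end. The sub-lemma that every $i\geq k+3\geq 8$ admits a representation $i=3p_1+4p_2+5p_3$ with $p_1,p_2,p_3$ having pairwise disjoint binary expansions is the actual content of the paper's proof; it is Proposition \ref{3,4,5} there, proved by an induction on $i$ in blocks $5\cdot 2^s\leq i<5\cdot 2^{s+1}$ with a three-case analysis to repair the binary supports after subtracting $5\cdot 2^s$. Your proposed shortcut — single-term decompositions when $3$, $4$ or $5$ divides $i$, plus ``explicit two- or three-term combinations'' for the remaining residues — does not close uniformly in $i$: for $i$ coprime to $60$ one must choose the powers of $2$ distributing $i$ among $3,4,5$ so that the supports stay disjoint (e.g.\ $257=3\cdot 2^6+5\cdot 13$ works but the naive $257=4\cdot 61+5\cdot 2+3\cdot 1$ does not), and arranging this for all $i$ is precisely the induction you are omitting. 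So the proof is incomplete at, and only at, the combinatorial lemma; everything surrounding it is sound and matches the paper.
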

To prove this theorem we first need to prove some basic results. Let $q$ be a non-negative integer and $q$ can be uniquely written as a finite sum $\sum_i a_i 2^i$ in the binary expansion, where $a_i=0$ or $1$. Let $S(q)$ be the set of non-negative integers $i$ for which $a_i=1$, written in decreasing order. For example, let us choose $23$. We can write $23$ as $23=2^4+2^2+2^1+2^0$. Then $S(23)=\{4,2,1,0\}$. So, $S(q)=\varnothing$ if and only if $q=0.$ If $\{p_1,p_2,p_3\}$ be a set of three non-negative integers with the property that $S(p_1)\cap S(p_2)=S(p_2)\cap S(p_3)=S(p_1)\cap S(p_3)=\varnothing$, then we say that {\it $\{p_1,p_2,p_3\}$ satisfies property $P$}. Now, we shall state the required fact (see \cite{Lu1878}). \\[0.2cm]
\textbf{Fact:} {\it If $p_1,p_2$ are two positive integers such $S(p_1)\cap S(p_2)=\varnothing$, then $p_1+p_2\choose p_1$ is odd.}\\[0.2cm]
The above fact and the following proposition will be used to prove Theorem \ref{k5}.
\begin{prpn} \label{3,4,5}
If $i\neq 1,2,7$, then $i$ can be written as $i=3p_1+4p_2+5p_3$ such that $\{p_1,p_2,p_3\}$ satisfies property $P$.
\end{prpn}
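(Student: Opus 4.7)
The plan is first to recast the condition on $(p_1,p_2,p_3)$ in a more usable form, and then argue by strong induction on $i$. Writing each $p_\ell$ in binary, a triple satisfying property $P$ with $3p_1+4p_2+5p_3=i$ corresponds bijectively to a decomposition
\[
i=\sum_j c_j\,2^{e_j},\qquad c_j\in\{3,4,5\},
\]
in which the exponents $e_j$ are pairwise distinct: a bit $2^a$ in $p_\ell$ contributes a summand with $c=3,4,5$ according as $\ell=1,2,3$, and distinctness of the $e_j$'s translates exactly into the condition $S(p_1)\cap S(p_2)=S(p_2)\cap S(p_3)=S(p_1)\cap S(p_3)=\varnothing$. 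So it suffices to show that every $i\in\Z_{\ge 0}\setminus\{1,2,7\}$ admits such a decomposition.

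I would take $\{0,4,14\}$ as base cases, handled explicitly by $0=\varnothing$, $4=4\cdot 2^0$, and $14=4\cdot 2^0+5\cdot 2^1$. For the inductive step, let $i\ge 3$ with $i\notin\{4,7,14\}$. If $i$ is even then $i/2\notin\{1,2,7\}$ (because $i\ne 2,4,14$), so the hypothesis gives $i/2=\sum_j c_j\,2^{e_j}$, and shifting each exponent up by one yields a valid decomposition of $i$. If $i$ is odd, I would let $m$ be whichever of $(i-3)/2$ or $(i-5)/2$ lies in $\Z_{\ge 0}\setminus\{1,2,7\}$, apply the hypothesis to $m$ (the empty sum handles $m=0$), shift its exponents up by one, and prepend the corresponding new term $3\cdot 2^0$ or $5\cdot 2^0$. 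The new exponent $0$ is distinct from the shifted ones, which are all $\ge 1$.

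The only substantive point—and the main obstacle I would focus on—is showing that for every odd $i\ne 7$ at least one of $(i-3)/2,(i-5)/2$ lies in $\Z_{\ge 0}\setminus\{1,2,7\}$. This reduces to a finite check: $(i-3)/2\in\{1,2,7\}$ precisely when $i\in\{5,7,17\}$, and $(i-5)/2\in\{1,2,7\}$ precisely when $i\in\{7,9,19\}$; the intersection of these two sets is exactly $\{7\}$, which is excluded by hypothesis. Separately, one verifies that $1,2,7$ genuinely admit no such representation: $1$ and $2$ are smaller than the minimum contribution $3\cdot 2^0=3$, while for $7$ any single-term decomposition fails and any two-term decomposition with distinct $e_j$'s satisfies $c_1\cdot 2^{e_1}+c_2\cdot 2^{e_2}\ge 3+6=9>7$.
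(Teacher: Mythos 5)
Your proof is correct, and it takes a genuinely different route from the paper's. The key move is your reformulation: a triple $(p_1,p_2,p_3)$ with property $P$ and $3p_1+4p_2+5p_3=i$ is the same thing as a representation $i=\sum_j c_j 2^{e_j}$ with digits $c_j\in\{3,4,5\}$ attached to pairwise distinct powers of $2$; this bijection is exactly right, and it converts the problem into a digit-representation question that yields to a clean parity recursion (halve if even; strip a $3\cdot 2^0$ or $5\cdot 2^0$ if odd), with base cases $0,4,14$ forced precisely because those are the even numbers whose halves land in the excluded set. The finite check that for odd $i\neq 7$ at least one of $(i-3)/2$, $(i-5)/2$ avoids $\{1,2,7\}$ is the whole content of the inductive step, and your computation of the two offending sets $\{5,7,17\}$ and $\{7,9,19\}$ with intersection $\{7\}$ is correct (for $i=3$ the choice $(i-3)/2=0$ covers the one case where $(i-5)/2$ is negative). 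The paper instead inducts by subtracting the largest $5\cdot 2^s\le i$, treats the residues $5\cdot 2^s+r$ for $r\in\{1,2,7\}$ by explicit ad hoc identities, and then runs a three-case analysis to repair the binary supports when the inductive $p_1$ or $p_2$ collides with the bit at position $s$ (e.g.\ trading $3\cdot 2^s+5\cdot 2^s$ for $4\cdot 2^{s+1}$). Your version avoids all of that carry bookkeeping, is shorter, and has the added virtue of making transparent why exactly $1,2,7$ are the exceptional values; the paper's version stays closer to the raw statement and requires no reformulation, at the cost of a longer case analysis.
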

\begin{proof}
 First we prove the statement for $i=5.2^s+r$ for $r=1,2,7$ and for some non-negative integer $s$. We assume that $i=5.2^s+1$. Now we observe that $5.2^{s}+1-(5.2^{s-2}+1)=5.2^{s-2}.3$. Then putting $s=0$, we get $5.2^0+1=6$, which is divisible by 3. So, for an even integer $s$,  $5.2^s+1=3.\frac{5.2^s+1}{3}$. Here $\{p_1,p_2,p_3\}=\{\frac{5.2^s+1}{3},0,0\}$ which satisfies property $P$. When $s=1,$ $i=11=3.2+5.1$ where $\{2,0,1\}$ has property $P$. So, for any odd integer $s$, $5.2^s+1=3.(2+5.(2^{s-2}+\cdots+2))+5$ where $\{2+5.(2^{s-2}+\cdots+2),0,1\}$ has property $P$ as $2+5.(2^{s-2}+\cdots+2)$ is even.\\
\hf The other cases can be proved similarly using the facts that $5.2+2=3.4$, $5.2^2+2=3.6+4.1$, $5.2+7=3.4+5.1$, $5.2^0+7=3.4$. For $i=3,4,5,6,8,9,10$, this statement is obvious.
Now, we apply induction to prove the statement for any $i$ with $i>10.$ Fix an integer $i$ and assume that $j$ can be written as $3p_1+4p_2+5p_3$ where $\{p_1,p_2,p_3\}$ has the property $P$ for $j<i$ and $j\neq1,2,7$.  Let $s\geq1$ be an integer such that $5.2^s\leq i<5.2^{s+1}$. If $i-5.2^s=1,2,7$, then this case is already done. Otherwise, by induction $i-5.2^s$ can be written as $3p_1+4p_2+5p_3$ where $\{p_1,p_2,p_3\}$ has property $P$ and $p_3<2^s$.\\
\textbf{Case 1}:- If both $p_1,p_2<2^s$, then $i$
 can be written as $3p_1+4p_2+5(p_3+2^s)$. Then $S(p_3+2^s)=S(p_3)\cup \{s\}$. So, $S(p_1)\cap S(p_2)=S(p_2)\cap S(p_3+2^s)=S(p_1)\cap S(p_3+2^s)=\varnothing$. Hence, $\{p_1,p_2,p_3+2^s\}$ has the property $P$.\\
\textbf{Case 2}:- Assume $p_1\geq 2^s.$ Then $p_1<2^{s+1}$ as if $p_1\geq 2^{s+1}$, then $i-5.2^s-3.2^{s+1}\geq0,$ which gives a contradiction to $i<5.2^{s+1}.$ Similarly we can show that $p_2<2^s$, $p_3<2^s.$ Let $S(p_1)=\{s,s_1,\cdots\}$, $S(p_2)=\{t_1,\cdots\}$ and $S(p_3)=\{q_1,\cdots\}$. As $s_1,t_1,q_1<s$ we have
\begin{eqnarray*}
 i-5.2^s &=& 3(2^s+2^{s_1}+\cdots)+4(2^{t_1}+\cdots)+5(2^{q_1}+\cdots)\\
i &=& 3(2^{s_1}+\cdots)+4(2^{s+1}+2^{t_1}+\cdots)+5(2^{q_1}+\cdots)\\
i &=& 3(p_1-2^s)+4(p_2+2^{s+1})+5 p_3.
\end{eqnarray*}
We have $S(p_1-2^s)=S(p_1)-\{s\}$ and $S(p_2+2^{s+1})=S(p_2)\cup \{s+1\}$. Therefore, the statement is true for $i$.\\
\textbf{Case 3}:- Assume $p_2\geq2^s.$ Then, as we did in Case 2, we can show here too that $p_2<2^{s+1}$, $p_1<2^s$ and $p_3<2^s.$  Let $S(p_1)=\{s_1,\cdots\}$, $S(p_2)=\{s,t_1,\cdots\}$ and $S(p_3)=\{q_1,\cdots\}$. As $s_1,t_1,q_1<s$ we have 
\begin{eqnarray*}
i-5.2^s &=& 3(2^{s_1}+\cdots)+4(2^s+2^{t_1}+\cdots)+5(2^{q_1}+\cdots)\\
i &=& 3(2^{s+1}+2^s+2^{s_1}+\cdots)+4(2^{t_1}+\cdots)+5(2^{q_1}+\cdots)\\
i &=& 3(p_1+2^s+2^{s+1})+4(p_2-2^s)+5 p_3.
\end{eqnarray*}
We have $S(p_1+2^s+2^{s+1})=S(p_1)\cup\{s,s+1\}$ and $S(p_2-2^s)=S(p_2)-\{s\}.$ Therefore, the statement is true for $i.$
\end{proof}
Now we shall give a proof of Theorem \ref{k5}.
\begin{proof}
As $\bar{w}_i$ is the $i^\textup{th}$ degree term of the formal inverse of $(1+w_1+\cdots+w_k)$, we obtain from Proposition \ref{3,4,5}, $\bar{w}_i$ contains a monomial $w_3^{p_1}w_4^{p_2}w_5^{p_3}$ with coefficient ${p_1+p_2+p_3\choose p_1+p_2}\cdot {p_1+p_2\choose p_1}$ where $\{p_1,p_2,p_3\}$ satisfies property $P.$ Using the Fact we conclude that the coefficient ${p_1+p_2+p_3\choose p_1+p_2}\cdot {p_1+p_2\choose p_1}$ of $w_3^{p_1}w_4^{p_2}w_5^{p_3}$ is $1\in \Z_2$.
The Gysin sequence of the covering map $\mathbb{Z}_2\hookrightarrow \wt{G}_{n,k}\xrightarrow{\pi} G_{n,k}$ is the following:\\
$$\cdots\to H^{j-1}(G_{n,k})\xrightarrow{\cup w_1}H^j(G_{n,k})\xrightarrow{\pi^*}H^j(\wt{G}_{n,k})\to H^j(G_{n,k})\xrightarrow{\cup w_1}H^{j+1}(G_{n,k})\to\cdots.$$
By Theorem 2.1 of \cite{Kor15} and the discussion following its statement, it suffices to show that 
$$\textup{ker}(H^{n-k+2}(G_{n,k})\xrightarrow{\cup w_1}H^{n-k+3}(G_{n,k}))=0.$$
Let $p_{n-k+2}\in \textup{ker}(H^{n-k+2}(G_{n,k})\xrightarrow{\cup w_1}H^{n-k+3}(G_{n,k}))$, i.e.,
\bge\label{(1)}
p_{n-k+2}\cup w_1=a_1 w_1^2\bar{w}_{n-k+1}+a_2w_2\bar{w}_{n-k+1}+a_3w_1\bar{w}_{n-k+2}+a_4\bar{w}_{n-k+3}.
\ede
As $n-k+3>7$, $\bar{w}_{n-k+3}$ contains a monomial $w_3^{p_1}w_4^{p_2}w_5^{p_3}$ with non-zero coefficient. So, comparing the coefficients of $w_3^{p_1}w_4^{p_2}w_5^{p_3}$ in \eqref{(1)}, we get $a_4=0$. Now we get the reduced equation
\bgd
p_{n-k+2}\cup w_1=a_1w_1^2\bar{w}_{n-k+1}+a_2w_2\bar{w}_{n-k+1}+a_3w_1\bar{w}_{n-k+2}.
\edd
Reducing module $w_1$ and using Lemma \ref{Korbas} (ii), we obtain $p_{n-k+2}\cup w_1=a_1w_1^2\bar{w}_{n-k+1}+a_3w_1\bar{w}_{n-k+2}.$ Therefore, $p_{n-k+2}=0\in H^{n-k+2}(G_{n,k}).$ 
 \end{proof}
\noindent
Next we prove a result which gives a correspondence between the characteristic rank of the oriented tautological $k$-plane bundle $\wt{\gamma}_{n,k}$ over $\wt{G}_{n,k}$ and the upper characteristic rank of $\wt{G}_{n,k}$.  
\begin{thm} \label{uchar}
If $\texttt{charrank}(\wt{\gamma}_{n,k})=r-1$, where $r\neq2^t$ for some $t$, then $\texttt{ucharrank}(\wt{G}_{n,k})= r-1.$
\end{thm}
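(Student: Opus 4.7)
The inequality $\texttt{ucharrank}(\wt{G}_{n,k}) \geq r-1$ is immediate from the definition, since $\wt{\gamma}_{n,k}$ itself is a bundle over $\wt{G}_{n,k}$ realizing characteristic rank $r-1$. For the reverse inequality, the plan is to show that for an arbitrary real vector bundle $\xi$ over $\wt{G}_{n,k}$, every Stiefel-Whitney class $w_i(\xi)$ with $i\le r$ lies in the subring $B\subseteq H^*(\wt{G}_{n,k};\Z_2)$ generated by the Stiefel-Whitney classes of $\wt{\gamma}_{n,k}$. Once this is established, since the hypothesis gives an element $y\in H^r(\wt{G}_{n,k};\Z_2)\setminus B$, no polynomial in the $w_i(\xi)$'s can equal $y$, and hence $\texttt{charrank}_{\wt{G}_{n,k}}(\xi)\le r-1$.

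For $i<r$ the containment $w_i(\xi)\in B$ is automatic: the hypothesis $\texttt{charrank}(\wt{\gamma}_{n,k})=r-1$ forces $H^i(\wt{G}_{n,k};\Z_2)\subseteq B$ for every $i<r$. The substantive step is $i=r$, which I would handle via Wu's formula. Write $r=2^{a_1}+2^{a_2}+\cdots+2^{a_s}$ in binary with $a_1>a_2>\cdots>a_s\ge 0$; since $r\ne 2^t$, we have $s\ge 2$. Set $j:=2^{a_1}$, so $0<j<r$. Wu's formula gives
\bgd
Sq^{r-j}(w_j(\xi)) \;=\; \binom{j-1}{r-j}\,w_r(\xi)+\sum_{t=0}^{r-j-1}\binom{2j-r+t-1}{t}\,w_{r-j-t}(\xi)\,w_{j+t}(\xi).
\edd
Every factor appearing in the sum on the right has degree strictly between $0$ and $r$, hence lies in $B$. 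The operator $Sq^{r-j}$ preserves $B$ because applying Wu's formula to the generators $\wt{w}_l(\wt{\gamma}_{n,k})$ of $B$ produces polynomials in those same classes; so $Sq^{r-j}(w_j(\xi))\in B$ as well, since $w_j(\xi)\in B$. Lucas's theorem yields $\binom{j-1}{r-j}=\binom{2^{a_1}-1}{2^{a_2}+\cdots+2^{a_s}}\equiv 1\pmod 2$, because every binary digit of $r-j$ lies in a position strictly below $a_1$ and $2^{a_1}-1$ has a $1$ in every such position. Rearranging therefore expresses $w_r(\xi)$ as an element of $B$.

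The key point — and the only place the hypothesis $r\ne 2^t$ enters — is the production of some $j<r$ with $\binom{j-1}{r-j}$ odd so that $w_r$ can be isolated via a Wu relation; the choice $j=2^{a_1}$ accomplishes this precisely when $r$ has at least two distinct binary digits, i.e. is not a power of $2$. This binomial identity is the one real obstacle to be checked carefully; everything else is bookkeeping. With $w_i(\xi)\in B$ for all $i\le r$ in hand, the subring of $H^*(\wt{G}_{n,k};\Z_2)$ generated by the Stiefel-Whitney classes of $\xi$ is contained in $B$ in degrees $\le r$, so it cannot reach $y$, yielding $\texttt{charrank}(\xi)\le r-1$ for every $\xi$ and hence $\texttt{ucharrank}(\wt{G}_{n,k})=r-1$.
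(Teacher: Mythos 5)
Your proof is correct and follows essentially the same route as the paper: reduce to showing $w_r(\xi)$ lies in the subring $\pi^*(H^*(G_{n,k}))$ for every bundle $\xi$, and isolate $w_r(\xi)$ from a Wu relation whose leading binomial coefficient is odd precisely because $r$ is not a power of $2$. The only (harmless) difference is the choice of relation — you apply $Sq^{r-2^{a_1}}$ to $w_{2^{a_1}}(\xi)$ with $2^{a_1}$ the top binary digit of $r$ and verify $\binom{2^{a_1}-1}{r-2^{a_1}}\equiv 1$ by Lucas, whereas the paper writes $r=2^t s$ with $s>1$ odd and applies $Sq^{2^t}$ to $w_{r-2^t}(\xi)$, checking $\binom{r-2^t-1}{2^t}$ is odd by a direct product computation.
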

\begin{proof}
As $r-1$ is the characteristic rank of $\wt{\gamma}_{n,k}$, it follows that $H^i(G_{n,k})\xrightarrow{\pi^*} H^i(\wt{G}_{n,k})$ is surjective for $i< r$ and not surjective for $i=r$. We also know that $\texttt{ucharrank}(\wt{G}_{n,k})$ is at least $r-1$. Let $\xi$ be any vector bundle over $\wt{G}_{n,k}$. If we can show that $w_r(\xi)\in p^*(H^r(\wt{G}_{n,k}))$, then we can conclude that $\texttt{ucharrank}(\wt{G}_{n,k})$ is exactly $r-1.$ \\
\hf Let $r=2^ts$ where $s$ is an odd number with $s>1.$ Then by Wu's formula we have\\
$$Sq^{2^t}(w_{2^ts-2^t}(\xi))=\sum_{l=0}^{2^t}{{2^ts-2^t+l-2^t-1}\choose{l}}w_{2^t-l}(\xi) w_{2^t s - 2^t+l}(\xi)=A+{{2^ts-2^t-1}\choose{2^t}}w_{2^ts}(\xi),$$
where $A\in \pi^*(H^*(G_{n,k}))$. Now
$${{2^ts-2^t-1}\choose{2^t}}=\prod_{l=1}^{2^t}\frac{2^ts-2^t-l}{l}=\bigg(\prod_{l=1}^{2^{t-1}}\frac{2^ts-2^t-2l}{2l}\bigg)\cdot \bigg(\prod_{l=1}^{2^{t-1}}\frac{2^ts-2^t-2l+1}{2l-1}\bigg).$$
Let $l=2^{t^\prime}s^{\prime},$ where $s^\prime$ is odd. So, $2.2^{t^\prime}s^\prime\leq2^t,$ which implies $t^\prime+1\leq t.$ When $t^\prime+1=t$, we get $l=2^{t-1}.$ Then\\
$$\frac{2^ts-2^t-2l}{2l}=\frac{2^ts-2^t-2^{t^\prime+1}s^{\prime}}{2^{t^\prime+1}s^\prime}=\frac{2^{t-t^\prime-1}s-2^{t-t^\prime-1}-s^\prime}{s^\prime}.$$
When $t^\prime+1<t,$ then $\frac{2^ts-2^t-2l}{2l}$ is odd and when $t^\prime+1=t$, i.e., $l=2^{t-1}$, we have 
\bgd
\frac{2^ts-2^t-2l}{2l}=\frac{2^ts-2^t-2^t}{2^t}=s-2
\edd
is again an odd number. Therefore, $Sq^{2^t}(w_{2^ts-2^t}(\xi))=A+w_{2^ts}(\xi)$. Due to the naturality of $Sq^i,$ we have $Sq^{2^t}(w_{2^ts-2^t}(\xi))\in \pi^*(H^r(G_{n,k}))$. Therefore, $w_{2^ts}(\xi)\in \pi^*(H^r(G_{n,k}))$. As this is true for any vector bundle $\xi$, we infer that $\texttt{ucharrank}(\wt{G}_{n,k})=r-1.$
\end{proof}
We shall give one immediate implication of Theorem \ref{uchar}.
\begin{cor} \label{ucharrank for k=3}
The upper characteristic rank $\texttt{ucharrank}(\wt{G}_{n,3})=\texttt{charrank}(\wt{\gamma}_{n,3})$ for $n\geq8.$
\end{cor}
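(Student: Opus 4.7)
My plan is to reduce the corollary directly to Theorem \ref{uchar}, which asserts that whenever $\texttt{charrank}(\wt{\gamma}_{n,k}) = r-1$ with $r$ not a power of $2$, the upper characteristic rank of $\wt{G}_{n,k}$ equals the characteristic rank of the oriented tautological bundle. So the task splits into two parts: identify $r$ concretely from Theorem \ref{fifth}, then verify the non-power-of-two hypothesis for every $n \geq 8$.

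For the first part I would set $D(n)$ to be the smallest degree $> 3$ carrying an indecomposable in $H^*(\wt{G}_{n,3};\Z_2)$. Since $\wt{w}_1 = 0$ and $\wt{w}_2, \wt{w}_3$ are the only indecomposables in degrees $\leq 3$, Theorem \ref{fifth} immediately yields $\texttt{charrank}(\wt{\gamma}_{n,3}) = D(n) - 1$, with $D(n)$ equal to $2^t-1$, $2^t-4$, or $\min(3n - 2^t - 1,\, 2^t-4)$ in the three cases distinguished by that theorem.

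For the second part I would check each case of Theorem \ref{fifth} separately. The cases $n = 2^t$ (with $t \geq 3$) and $n \in \{2^t-1, 2^t-2, 2^t-3\}$ (with $t \geq 4$) are routine once one observes that $2^t - 1$ is odd and $2^t - 4 = 4(2^{t-2} - 1)$ has an odd factor $\geq 3$. The delicate case is $2^{t-1} < n \leq 2^t - 4$: here $D(n)$ might in principle be $3n - 2^t - 1$, whose congruence behaviour modulo $3$ is subtle. The strategy is to use the sandwich $2^{t-1} \leq 3n - 2^t - 1 < 2^t - 4$ to pin down any hypothetical equality $2^s = 3n - 2^t - 1$ to the single possibility $s = t-1$, and then derive the contradiction $3 \mid 1$ from $3n = 3\cdot 2^{t-1} + 1$.

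The main obstacle is making this middle-range argument airtight: one must rule out powers of $2$ among both candidate values inside the $\min$, and it is precisely the bounds coming from $n > 2^{t-1}$ together with $n \leq 2^t - 4$ that squeeze out a single impossible exponent. Once every $n \geq 8$ is handled this way, Theorem \ref{uchar} applied with $r = D(n)$ closes the argument.
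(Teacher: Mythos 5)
Your proposal is correct and follows essentially the same route as the paper: identify the first indecomposable degree $r>3$ from Theorem \ref{fifth} (so that $\texttt{charrank}(\wt{\gamma}_{n,3})=r-1$), verify that $r$ is never a power of two ($2^t-1$ is odd, $2^t-4=4(2^{t-2}-1)$ with $t\geq 4$ in the relevant cases, and $3n-2^t-1$ is squeezed strictly between $2^{t-1}$ and $2^t-4$ when it is the minimum, forcing the impossible congruence $3n=3\cdot 2^{t-1}+1$), and then invoke Theorem \ref{uchar}. One small caution: you need not, and in fact cannot always, rule out powers of two for \emph{both} candidates inside the $\min$ --- for instance $n=11$, $t=4$ gives $3n-2^t-1=16$ --- but only for the minimum itself, which is exactly what your sandwich argument targets, so the plan goes through.
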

\begin{proof}
By \cite{Kor15} and \cite{PPR17} we have the full description of the characteristic rank of $\wt{G}_{n,3}.$ Consider the following observations:\\
(i) $2^r-4$ is not of the form $2^s$ when $r>3$, and\\
(ii) when $n>2^{t-1}$ and $3n-2^t-1<2^t-4$, then $2^{t-1}<3n-2^t-1<2^t-4,$ so $3n-2^t-1$ is also not of the form $2^s$.\\
The above two statements when combined together implies the result.
\end{proof}

%================================ bibliography ==================================

\vspace*{0.4cm}

\noindent{\small D}{\scriptsize EPARTMENT OF }{\small M}{\scriptsize ATHEMATICS \& }{\small S}{\scriptsize TATISTICS, }{\small I}{\scriptsize NDIAN }{\small I}{\scriptsize NSTITUTE OF }{\small S}{\scriptsize CIENCE }{\small E}{\scriptsize DUCATION \& }{\small R}{\scriptsize ESEARCH, }{\small M}{\scriptsize OHANPUR, }{\small WB} {\footnotesize 741246, }{\small INDIA}\\
{\it E-mail address} : \texttt{somnath.basu@iiserkol.ac.in}\\[0.2cm]

\noindent{\small D}{\scriptsize EPARTMENT OF }{\small M}{\scriptsize ATHEMATICS \& }{\small S}{\scriptsize TATISTICS, }{\small I}{\scriptsize NDIAN }{\small I}{\scriptsize NSTITUTE OF }{\small S}{\scriptsize CIENCE }{\small E}{\scriptsize DUCATION \& }{\small R}{\scriptsize ESEARCH, }{\small M}{\scriptsize OHANPUR, }{\small WB} {\footnotesize 741246, }{\small INDIA}\\
{\it E-mail address} : \texttt{chakraborty.prateep@gmail.com}\\[0.2cm]

\end{document}